\begin{document}

\numberwithin{equation}{section}

\theoremstyle{definition}
\newtheorem{theorem}{Theorem}[section]
\newtheorem{proposition}[theorem]{Proposition}
\newtheorem{lemma}[theorem]{Lemma}
\newtheorem{corollary}[theorem]{Corollary}
\newtheorem{remark}[theorem]{Remark}
\newtheorem{definition}[theorem]{Definition}
\newtheorem{example}[theorem]{Example}
\newtheorem{problem}[theorem]{Problem}
\newtheorem{convention}[theorem]{Convention}
\newtheorem{conjecture}[theorem]{Conjecture}

\title[Acylindrical hyperbolicity of Artin groups]
{Acylindrical hyperbolicity and the centers of 
Artin groups that are not free of infinity}

\author[M. Kato]{Motoko Kato}
\thanks{The first author is supported by JSPS KAKENHI Grant Number 20K14311, 
and JST, ACT-X Grant Number JPMJAX200A, Japan.}
\address[M. Kato]{Faculty of Education, University of the Ryukyus, 1 Sembaru, 
Nishihara-Cho, Nakagami-Gun, Okinawa 903-0213, Japan}
\email{katom@cs.u-ryukyu.ac.jp}

\author[S. Oguni]{Shin-ichi Oguni}
\thanks{The second author is supported by JSPS KAKENHI Grant Number 20K03590.}
\address[S. Oguni]{Graduate School of Science and Engineering, Ehime University, 2-5 Bunkyo-cho, Matsuyama, Ehime 790-8577, Japan}
\email{oguni.shinichi.mb@ehime-u.ac.jp}

\keywords{Artin group, Acylindrical hyperbolicity, WPD contracting element, CAT(0)-cube complex, Center} 
\subjclass[2000]{20F36, 20F65, 20F67}

\maketitle

\begin{abstract}    
Charney and Morris-Wright showed acylindrical hyperbolicity of 
Artin groups of infinite type associated with graphs that are not joins, 
by studying clique-cube complexes and the actions on them. 
The authors developed their study and clarified when 
acylindrical hyperbolicity holds for Artin groups of infinite type 
associated with graphs that are not cones. 
In this paper, we introduce reduced clique-cube complexes. 
Using these complexes, we show acylindrical hyperbolicity of irreducible Artin groups 
associated with graphs that are not cliques, i.e., irreducible 
Artin groups that are not free of infinity. 
One application of our main result is that the centers of such Artin groups are finite, 
and that actually they are trivial in many cases.
Another corollary is that irreducible Artin groups of infinite type and of type FC are acylindrically hyperbolic.
\end{abstract}

\section{Introduction}

Artin groups, also called Artin-Tits groups, have been widely studied since
their introduction by Tits [36]. 
Let $\Gamma$ be a finite simple graph 
with the vertex set $V(\Gamma)$ and the edge set $E(\Gamma)$. 
Each edge $e$ has two endpoints, which we denote by $s_e$ and $t_e$. 
We suppose that any edge $e$ is labeled by an integer $\mu(e)\ge 2$. 
The \textit{Artin group} $A_{\Gamma}$ associated with $\Gamma$ is defined by the following presentation:
	\begin{align}\label{pres_standard_eq}
	A_{\Gamma}=\langle V(\Gamma)\mid \underbrace{s_e t_e s_e t_e \cdots}_{\text{length $\mu(e)$}}	
	=\underbrace{t_e s_e t_e s_e\cdots}_{\text{length $\mu(e)$}} \quad \text{$\left(e\in E(\Gamma)\right)$ } 	\rangle.
	\end{align}
Here, $\Gamma$ is called the \textit{defining graph} of $A_{\Gamma}$
and the \textit{rank} of $A_{\Gamma}$ is defined as $\#V(\Gamma)$.
Free abelian groups, free groups, and braid groups are typical examples of Artin groups.
For any $U\subset V(\Gamma)$, the subgroup $A_{U}$ generated by $U$ is called a \textit{standard parabolic subgroup}.
Adding the relation $v^2 = 1$ for each $v\in V(\Gamma)$ to the presentation $(\ref{pres_standard_eq})$
yields the associated \textit{Coxeter group} $W_\Gamma$.
Let $\Gamma^t$ be the \textit{Coxeter graph} defined by the presentation of $W_{\Gamma}$.
That is, $\Gamma^t$ is a finite simple graph with edge labels in $\mathbb{Z}_{\geq 3}\cup \{\infty\}$, 
which is obtained from $\Gamma$ by removing all edges with label $2$ and inserting an edge with label $\infty$
between every pair of vertices that are not adjacent in $\Gamma$.
Although $\Gamma^t$ is used in research related to Coxeter groups and traditional treatments of Artin groups, 
we mainly use $\Gamma$ following many recent studies on Artin groups in geometric group theory. 
We use the associated Coxeter graph $\Gamma^t$ as an aid in this paper. 
See Figure~\ref{example_fig}.
\begin{figure}
\begin{center}
\includegraphics[width=8cm,pagebox=cropbox,clip]{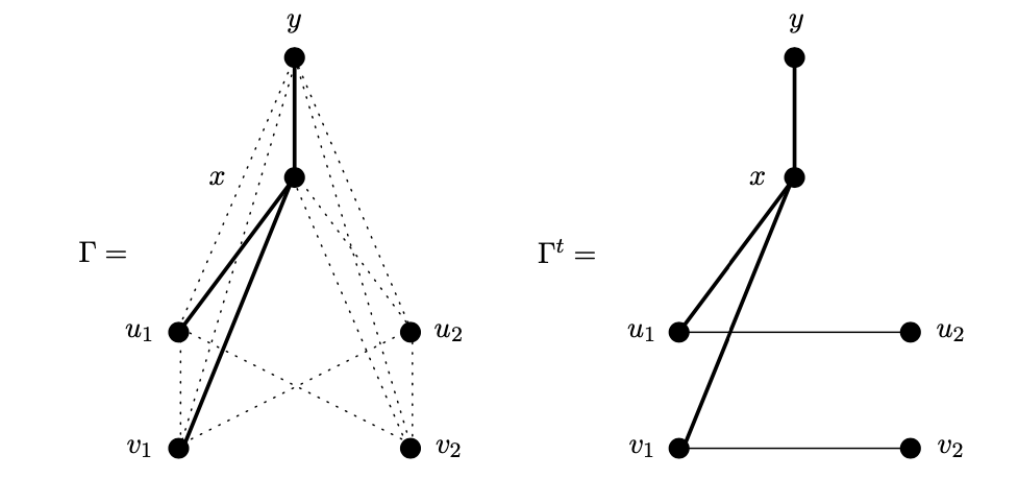} 
\caption{An example of a defining graph $\Gamma$ and the associated Coxeter graph $\Gamma^t$. 
Thick solid lines indicate edges labeled by $3$, and dotted lines indicate edges labeled by $2$. In $\Gamma^t$, thin solid lines denote edges labeled by $\infty$.}\label{example_fig}
\end{center}
\end{figure}

In terms of the properties of $W_{\Gamma}$ or $\Gamma^t$, 
we can define several important classes of Artin groups. 
The Artin group $A_\Gamma$ is said to be \textit{of spherical type} 
if $W_\Gamma$ is finite. Otherwise, it is said to be \textit{of infinite type}. 
The Artin group $A_\Gamma$ is said to be \textit{irreducible} 
if $W_\Gamma$ is \textit{irreducible}; that is, 
when $\Gamma^t$ is connected. 
If $\Gamma^t$ does not contain edges labeled by $\infty$, then we say $A_\Gamma$ is \textit{free of infinity}.
By definition, $A_\Gamma$ is free of infinity if and only if $\Gamma$ is a clique.
In this paper, we focus on Artin groups that are not free of infinity; that is, Artin groups associated with graphs that are not cliques. 

In general, compared with Artin groups, Coxeter groups are relatively well understood. 
For example, 
it is well known that an infinite Coxeter group $W_\Gamma$ is irreducible if and only if 
$W_\Gamma$ cannot be directly decomposed into two nontrivial subgroups (\cite{MR2240393} and \cite{MR2333366}). 
However, it is unclear 
whether $A_\Gamma$ is irreducible if and only if $A_\Gamma$ cannot be directly decomposed 
into two nontrivial subgroups. 
We remark that many basic questions about Artin groups remain open 
(refer to \cite{Charney2008PROBLEMSRT} and \cite{MR3203644}). 

To study Artin groups, 
it is often effective to investigate their hidden
nonpositively curved or negatively curved properties. 
In this paper, we consider 
the following problem \cite[Conjecture B]{haettel2019xxl}. 
\begin{problem}\label{acyl_conj}
Are irreducible Artin groups of infinite type acylindrically hyperbolic?
\end{problem}
\noindent
Acylindrical hyperbolicity is an important property in geometric group theory. 
We can find various applications of acylindrical hyperbolicity in \cite{MR3589159}, \cite{Osin} and \cite{Osin2} for example. 

It is known that reducible Artin groups are not acylindrically hyperbolic, since acylindrically hyperbolic groups cannot be directly decomposed into two infinite subgroups (\cite[Corollary 7.3]{Osin}). 
We also note that Artin groups with infinite centers are not acylindrically hyperbolic, since acylindrically hyperbolic groups do not admit infinite centers (\cite[Corollary 7.3]{Osin}).
Although irreducible Artin groups of spherical type have infinite cyclic center (\cite{BS} and \cite{MR422673})
and thus are not acylindrically hyperbolic,
their central quotients are acylindrically hyperbolic when the rank is at least $2$  (see \cite{MR1914565}, \cite{MR2390326}, \cite{MR2367021} for braid groups
and \cite{MR3719080} for the general case). 

Many affirmative partial answers to Problem~\ref{acyl_conj} are known. 
Indeed, the following irreducible Artin groups of infinite type are known to be 
acylindrically hyperbolic: 
\begin{itemize}
\item Right-angled Artin groups (\cite{MR2827012}, \cite{MR3192368}).
\item Two-dimensional Artin groups such that the associated Coxeter groups are hyperbolic (\cite{alex2019acylindrical}).
\item Artin groups of XXL-type (\cite{haettel2019xxl}).
\item Artin groups of type FC such that the defining graphs have diameter greater than $2$ (\cite{MR3966610}). Here, an Artin group is \textit{of type FC} if every standard parabolic subgroup associated with a clique in the defining graph is of spherical type.
\item Artin groups that are known to be CAT(0) groups according to the result of Brady and McCammond (\cite{MR1770639} and \cite{KO}).
\item Euclidean Artin groups (\cite{Calvez}).
\item \textit{Two-dimensional} Artin groups; that is, 
Artin groups for which every triangle with three edges $e_1,e_2,e_3$ 
in the defining graphs satisfy $\frac{1}{\mu(e_1)} + \frac{1}{\mu(e_2)}  + \frac{1}{\mu(e_3)} \le 1$ 
(\cite{Vaskou}).
\item Artin groups of which defining graphs that are not joins (\cite{Charney}). 
\item Artin groups of which defining graphs that are not cones (\cite{KO2}).
\item Relatively extra-large-type Artin groups under mild conditions (\cite{Goldman}).
\item Even Artin groups of type FC, as well as Artin groups of type FC that visually split over standard parabolic subgroups of spherical type (\cite{CMM}).
\end{itemize}

As a general strategy for Problem~\ref{acyl_conj}, 
we separately consider the case in which the defining graph is a clique 
and the case in which the defining graph is not a clique. 
Note that Artin groups in the latter case are automatically of infinite type. 
In this paper, we give an affirmative answer to Problem~\ref{acyl_conj} 
in the latter case. 
Our main theorem is the following: 
\begin{theorem}\label{main}
Let $A_\Gamma$ be an irreducible Artin group associated with $\Gamma$. 
Suppose that $\Gamma$ is not a clique. 
Then $A_\Gamma$ is acylindrically hyperbolic.
\end{theorem}
From Theorem~\ref{main}, we find that 
many irreducible Artin groups of infinite type are acylindrically hyperbolic, e.g., 
the Artin group associated with the defining graph in Figure~\ref{example_fig}. 
We also apply Theorem~\ref{main} to Artin groups that are both of infinite type and of type FC.

\begin{corollary}
Let $A_\Gamma$ be an irreducible Artin group of infinite type
and of type FC.
Then $A_\Gamma$ is acylindrically hyperbolic.
\end{corollary}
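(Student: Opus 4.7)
The plan is to reduce the corollary to Theorem \ref{main} by showing that the hypotheses of the corollary force the defining graph $\Gamma$ not to be a clique, after which one of the equivalences in Theorem \ref{main} applies verbatim.

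First I would examine the case $\Gamma$ is a clique. By definition of type FC, every standard parabolic subgroup $A_C$ associated with a clique $C\subset V(\Gamma)$ is of spherical type. If $\Gamma$ itself is a clique, then taking $C=V(\Gamma)$ gives $A_\Gamma=A_C$, so $A_\Gamma$ would be of spherical type, meaning $W_\Gamma$ is finite. This contradicts the assumption that $A_\Gamma$ is of infinite type. Hence $\Gamma$ is not a clique.

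Once this is established, the remaining step is an immediate invocation of Theorem \ref{main}: the hypothesis ``$\Gamma$ not a clique'' is met, the group $A_\Gamma$ is irreducible by assumption, so condition (1) of Theorem \ref{main} holds, and therefore condition (3), i.e.\ acylindrical hyperbolicity, holds as well.

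There is no real obstacle; the argument is essentially a definitional unfolding combined with the main theorem. The only point that deserves a sentence in the write-up is the compatibility of ``type FC'' with ``infinite type'' when $\Gamma$ is a clique, since this is the one place where the two hypotheses interact nontrivially. No appeal to the reduced clique-cube complex, to WPD contracting elements, or to the virtual direct indecomposability part of Theorem \ref{main} is needed for this corollary.
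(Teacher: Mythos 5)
Your proof is correct and takes essentially the same approach as the paper: the paper's remark immediately following the corollary (``Note that, when the defining graph of type FC is a clique, the Artin group is of spherical type'') is precisely the observation you spell out, and the rest is the direct application of Theorem \ref{main} $(1)\Rightarrow(3)$.
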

\noindent
Note that, an Artin group of type FC is of spherical type if the defining graph is a clique.

Before we give other applications, we define the maximal clique factor of $\Gamma$.
Let $\Gamma$ be a simple graph that is not a clique.
If a factor of a join decomposition of $\Gamma$ is a clique,
then it is called a \textit{clique factor}.
If $\Gamma$ has a clique factor, then it has a unique maximal one,
which is called the \textit{maximal clique factor} and usually denoted by $\Gamma_0$ in this paper.
See Figures~\ref{example_fig} and \ref{example_fig2}.
For convenience, if $\Gamma$ has no nontrivial clique factor,
we set $\Gamma_0=\emptyset$ and refer to it as the maximal clique factor, as well.
See Section~\ref{join} for details related to join decompositions, and Figures~\ref{new_ex_fig} and \ref{new_ex_fig2} for other examples.
\begin{figure}
\begin{center}
\includegraphics[width=12cm,pagebox=cropbox,clip]{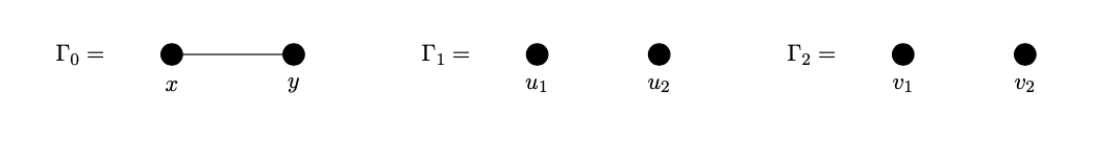} 
\caption{$\Gamma$ in Figure~\ref{example_fig} has a join decomposition $\Gamma=\Gamma_0\ast \Gamma_1\ast \Gamma_2$, and $\Gamma_0$ is the maximal clique factor. Later we denote $\Gamma_{\ast}=\Gamma_1\ast\Gamma_2$. See Section~\ref{join}.}\label{example_fig2}
\end{center}
\end{figure}

We apply Theorem~\ref{main} to the so-called center conjecture, which states that 
the center of any irreducible Artin group of infinite type is trivial.  
\begin{corollary}\label{main2}
Let $A_\Gamma$ be an irreducible Artin group associated with $\Gamma$ that is not a clique. 
Then the center $Z(A_\Gamma)$ is finite.  
Moreover, $Z(A_\Gamma)$ is contained in $Z(A_{\Gamma_0})$.
In particular, if $Z(A_{\Gamma_0})$ is trivial or torsion-free, then $Z(A_\Gamma)$ is trivial. 
Here, $\Gamma_0$ is the maximal clique factor of $\Gamma$. 
\end{corollary}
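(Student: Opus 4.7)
The plan is to deduce all three assertions from Theorem~\ref{main} together with standard consequences of acylindrical hyperbolicity.

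First, I will invoke Theorem~\ref{main} to conclude that $A_\Gamma$ is acylindrically hyperbolic, since it is assumed irreducible and $\Gamma$ is not a clique. A well-known consequence (see \cite[Corollary~7.3]{Osin}) is that the center of any acylindrically hyperbolic group is finite, which immediately yields the first assertion.

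The key step is the containment $Z(A_\Gamma)\subseteq Z(A_{\Gamma_0})$. Here I use that $\Gamma_0$ consists precisely of the vertices adjacent to every other vertex of $\Gamma$, so that one has a graph join decomposition $\Gamma=\Gamma_0\ast\Gamma'$ and in particular $A_{\Gamma_0}\le A_\Gamma$. My strategy is to first establish the set-theoretic inclusion $Z(A_\Gamma)\subseteq A_{\Gamma_0}$; once this is in place, any such $z$ commutes with all of $A_{\Gamma_0}$ (because it commutes with all of $A_\Gamma$) and hence lies in $Z(A_{\Gamma_0})$. To get $z\in A_{\Gamma_0}$ I plan to exploit the isometric action on the reduced clique-cube complex furnished by Theorem~\ref{main}(2): since $z$ is central, it commutes with every WPD contracting element $g$ produced by that theorem, and therefore lies in the elementary closure of $g$, which is virtually cyclic. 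Combined with the finiteness of $Z(A_\Gamma)$ from the first assertion, $z$ must act elliptically while preserving the axis of $g$ setwise. Running this argument for contracting elements built from different vertices of $\Gamma'$ and intersecting their centralizers, together with the description of cube stabilizers in the reduced clique-cube complex as standard parabolic subgroups all containing $A_{\Gamma_0}$, should pin $z$ down to $A_{\Gamma_0}$. I expect the geometric step---assembling a sufficiently rich family of WPD contracting elements indexed by vertices of $\Gamma'$ and identifying the common stabilizer of their axes with $A_{\Gamma_0}$---to be the main obstacle.

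Finally, the third assertion is then immediate: $Z(A_\Gamma)$ is a finite subgroup of $Z(A_{\Gamma_0})$, so triviality or torsion-freeness of the latter forces $Z(A_\Gamma)=\{1\}$.
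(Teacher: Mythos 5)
Your first and third assertions are handled exactly as the paper does, via Theorem~\ref{main} together with \cite[Corollary~7.3]{Osin}. However, the crucial middle containment $Z(A_\Gamma)\subseteq A_{\Gamma_0}$ is not actually established: you sketch a strategy through centralizers of WPD contracting elements and elementary closures, but you explicitly flag that assembling a rich enough family of such elements and identifying the common stabilizer of their axes with $A_{\Gamma_0}$ is an unresolved ``main obstacle.'' Theorem~\ref{main}(2) only furnishes a single WPD contracting element, so producing a family indexed by vertices of $\Gamma_\ast$ would require substantial additional construction beyond what is in the paper, and as written this step is a gap rather than a proof.

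The paper's route for this step is genuinely different and bypasses WPD elements entirely. It invokes Proposition~\ref{finite normal subgroup}, which is proved using minimality of the $A_\Gamma$-action on the complete CAT(0) cube complex $C_{\Gamma,\Gamma_0}$ (Lemma~\ref{CM Lemma 3.2}): any finite normal subgroup $N$ of $A_\Gamma$ (in particular $Z(A_\Gamma)$, once finiteness is known) has a nonempty, convex, $A_\Gamma$-invariant fixed-point set $\mathrm{Fix}(N)\subseteq C_{\Gamma,\Gamma_0}$; minimality then forces $\mathrm{Fix}(N)=C_{\Gamma,\Gamma_0}$, so $N$ fixes the vertex $A_{V(\Gamma_0)}$, whose stabilizer is $A_{\Gamma_0}$. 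Since elements of $Z(A_\Gamma)$ commute with all of $A_{\Gamma_0}\le A_\Gamma$, this yields $Z(A_\Gamma)\subseteq Z(A_{\Gamma_0})$. You should either replace your dynamical sketch with this fixed-point/minimality argument or actually carry out the construction of the family of contracting elements your plan requires; in its current form the proposal does not close the key step.
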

\noindent A result related to the last part of Corollary~\ref{main2} can be found in \cite{JM}.  
See Remark~\ref{center_rem}.

We also obtain a corollary concerning direct products. 
\begin{corollary}\label{main3}
Let $A_\Gamma$ be an irreducible Artin group associated with $\Gamma$ that is not a clique. 
Then $A_\Gamma$ is virtually directly indecomposable;
that is, it cannot be decomposed as a direct product 
of two infinite subgroups.
Moreover, if $A_{\Gamma_0}$ contains no nontrivial finite subgroup that is a factor of a direct decomposition of $A_{\Gamma_0}$, 
then $A_\Gamma$ is directly indecomposable. 
Here, $\Gamma_0$ is the maximal clique factor of $\Gamma$. 
\end{corollary}

\begin{example}\label{example2,3}
Let $A_\Gamma$ be an irreducible Artin group associated 
with $\Gamma$ that is not a clique. 
Suppose that the maximal clique factor $\Gamma_0$ consists of three vertices or fewer. 
See Figures~\ref{example_fig} and \ref{example_fig2}. 
Then $Z(A_{\Gamma_0})$ is known to be trivial or torsion-free, 
and thus $Z(A_\Gamma)$ is trivial by Corollary~\ref{main2}. 
Also, $A_{\Gamma_0}$ is known to be torsion-free, 
and thus $A_\Gamma$ is directly indecomposable by Corollary~\ref{main3}. 
\end{example}

\begin{remark}
Goldsborough and Vaskou, who are the authors of \cite{GV}, pointed to us out that the results in \cite{GV} and Theorem~\ref{main} imply acylindrical hyperbolicity of random Artin groups.
The following is based on their remark.
By referring to \cite{GV} for some definitions,
we can state the resulting implication precisely:
an Artin group picked at random relatively to
any non-decreasing divergent function $f:\mathbb{N}\to\mathbb{N}$ is
asymptotically almost surely acylindrically hyperbolic.
Indeed the case of $f(N)\succ N^{\frac{3}{2}}$ and
the case of $f(N)\prec N^{1-\alpha}$ for some $0<\alpha<1$
are covered in Corollaries 3.4 and 4.3 in \cite{GV}, respectively.
The missing case was unknown (see Question 1.6 in \cite{GV}),
but now it is resolved as follows:
we consider the case of $f(N)\prec N^2$, which contains the missing case.
By Lemma 2.7, Theorem 2.9 (2) and Lemma 2.10 in \cite{GV},
an Artin group picked at random relatively to $f$
asymptotically almost surely is irreducible and not free of infinity.
Thus, it is acylindrically hyperbolic by Theorem~\ref{main}.
\end{remark}

In this paper, for the proof of Theorem~\ref{main}, 
we develop arguments from \cite{Charney} 
and \cite{KO2}. 
Charney and Morris-Wright \cite{Charney}  established the acylindrical hyperbolicity of Artin groups of infinite type associated with graphs that are not joins, by studying clique-cube complexes, 
which are CAT(0) cube complexes, and the isometric actions on them. 
They constructed a WPD contracting element of such an Artin group with respect to the isometric action on the clique-cube complex. 
We note that the existence of a WPD contracting element is a sufficient condition for acylindrical hyperbolicity \cite{BBF}.
The authors \cite{KO2} developed this approach and treated Artin groups associated with graphs that are not cones. 
To treat 
the case of Artin groups associated with graphs that are not cliques, 
we need additional development and more refined arguments. 
Indeed, we introduce the reduced clique-cube complex. 
It is a convex subcomplex of the clique-cube complex, 
and works more appropriately in our case than the clique-cube complex. 
Together with careful analysis of the local geometry of the cube complexes, 
we 
construct a candidate of a WPD contracting element 
with respect to the action on the reduced clique-cube complex.
We also apply some algebraic results on standard parabolic subgroups of Artin groups to show that the candidate is actually a WPD contracting element.

The remainder of this paper is organized as follows. 
Section~\ref{preliminary_sec} contains some preliminaries. 
Section~\ref{recall} is for preliminaries regarding acylindrically hyperbolic groups, WPD contracting elements, 
and CAT(0) cube complexes. 
Section~\ref{defining_graph} presents preliminaries on defining graphs of Artin groups. 
Section~\ref{artin_group} presents facts on standard parabolic subgroups of Artin groups and gives a lemma on intersections of standard parabolic subgroups and their conjugates in Artin groups. 
Section~\ref{abs_clique} treats clique-cube complexes. 
Section~\ref{join} deals with the definition of joins of graphs and some related properties.
In Section~\ref{matrix}, we prepare some operations on matrices over a group, which are used when we construct a WPD contracting element in Section~\ref{main_proof}.
In Section~\ref{clique}, we introduce reduced clique-cube complexes 
and study the local geometry. 
In Section~\ref{main_proof}, we state Theorem~\ref{main-4} which contains Theorem~\ref{main} and adds information, and prove it. 
Our main task is to construct a candidate WPD contracting element
and show that it is really a WPD contracting element.
We also prove Corollaries~\ref{main2} and \ref{main3}. 
In Section~\ref{example_sec}, we treat an example to aid understanding of Section~\ref{main_proof}.

\section{preliminaries}\label{preliminary_sec}
\subsection{Acylindrical hyperbolicity, WPD contracting elements, and CAT(0) cube complexes}\label{recall}
We collect some definitions and properties related to acylindrical hyperbolicity, 
WPD contracting elements, and CAT(0) cube complexes for later use. 
See \cite{Genevois} and the references therein for details. 

First, we recall the definition of acylindrical hyperbolicity (see \cite{Osin}). 
\begin{definition}\label{acylindrically hyperbolic}
A group $G$ is \textit{acylindrically hyperbolic} if it admits an isometric action 
on a hyperbolic space $Y$ that is \textit{non-elementary}  (i.e., with an infinite limit set) 
and \textit{acylindrical} (i.e., for every $D\ge 0$, there exist some $R, N \ge 0$ such that, 
for all $y_1, y_2\in Y$, $d_Y(y_1, y_2)\ge R$ implies 
$\#\{g\in G |\ d_Y(y_1, g(y_1)), d_Y(y_2, g(y_2)) \le D\}\le N$).
\end{definition}

Next, we recall the definition of a WPD contracting element. 
\begin{definition}\label{WPD}
Let a group $G$ act isometrically on a metric space $X$. For $\gamma\in G$, 
we say that: 
\begin{itemize}
\item $\gamma$ is \textit{weakly properly discontinuous} (\textit{WPD}) if, for every $D \ge 0$ and $x \in X$, there exists some $M\ge 1$ 
such that 
$$\#\{g\in G |\ d_X(x, g(x)), d_X(\gamma^M(x), g\gamma^M(x)) \le D\}<\infty;$$ 
\item $\gamma$ is \textit{contracting} if $\gamma$ is \textit{loxodromic}, that is, 
there exists $x_0\in X$ such that the map $\mathbb Z \to X; n \mapsto \gamma^n(x_0)$ is a quasi-isometry onto the image 
$\gamma^\mathbb{Z}x_0:=\{\gamma^n(x_0) | n \in \mathbb Z\}$, 
and $\gamma^\mathbb{Z}x_0$ is \textit{contracting}; that is, there exists $B\ge 0$ such that 
the diameter of the nearest-point projection of any ball that is disjoint 
from $\gamma^\mathbb{Z}x_0$ onto $\gamma^\mathbb{Z}x_0$ is bounded by $B$. 
\end{itemize}
\end{definition}

The following is a consequence of \cite{BBF}. 
\begin{theorem}\label{BBF}
Let a group $G$ act isometrically on a geodesic metric space $X$. 
Suppose that $G$ is not virtually cyclic. 
If there exists a WPD contracting element $\gamma\in G$, then $G$ is acylindrically hyperbolic. 
\end{theorem}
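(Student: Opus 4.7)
The plan is to produce, out of the WPD contracting element $\gamma$, an isometric $G$-action on a hyperbolic space in which $\gamma$ still acts loxodromically and still satisfies WPD; then, since $G$ is not virtually cyclic, Osin's characterization of acylindrical hyperbolicity (a non-virtually-cyclic group admitting a loxodromic WPD isometry of a hyperbolic space is acylindrically hyperbolic) will close the argument. The hyperbolic space in question will be produced by the projection complex machinery of \cite{BBF}, applied to the orbit of the quasi-axis of $\gamma$.

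Concretely, I would set $A:=\gamma^{\mathbb{Z}}x_0$ and consider the $G$-orbit $\mathcal{Y}:=\{gA : g\in G\}$. The contracting hypothesis provides, for each $Y\in\mathcal{Y}$, a coarse nearest-point projection $\pi_Y\colon X\to Y$ whose image on any ball disjoint from $Y$ has diameter at most $B$, and this family is $G$-equivariant. The first step is to verify that $(\mathcal{Y},\{\pi_Y\})$ satisfies the Bestvina-Bromberg-Fujiwara projection axioms. The essential estimate is a Behrstock-type inequality: for a sufficiently large threshold $\theta$, whenever $Y_1,Y_2,Y_3\in\mathcal{Y}$ are distinct and the diameter of $\pi_{Y_2}(Y_1)\cup\pi_{Y_2}(Y_3)$ in $Y_2$ exceeds $\theta$, then the diameter of $\pi_{Y_3}(Y_1)\cup\pi_{Y_3}(Y_2)$ in $Y_3$ is at most $\theta$. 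This is a standard contracting-geodesic argument: a geodesic in $X$ whose endpoints project far apart on $Y_2$ must fellow-travel $Y_2$ along a long subsegment, so it cannot simultaneously project widely onto the disjoint translate $Y_3$.

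With the axioms in hand, the BBF construction produces, for large enough parameter $K$, a quasi-tree $\mathcal{P}_K(\mathcal{Y})$; this is hyperbolic and carries an isometric $G$-action induced by the $G$-action on labels. The element $\gamma$ stabilizes $A$, acts on $A$ by translation, and hence acts loxodromically on $\mathcal{P}_K(\mathcal{Y})$. The final step is to transfer WPD from $X$ to $\mathcal{P}_K(\mathcal{Y})$: an element $g\in G$ that coarsely fixes two far-apart points on the axis of $\gamma$ in $\mathcal{P}_K(\mathcal{Y})$ must coarsely fix corresponding far-apart points of $A\subset X$ (displacement along the axis in the quasi-tree coarsely controls displacement along $A$ in $X$), and then WPD on $X$ forces the set of such $g$ to be finite. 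Combined with the hypothesis that $G$ is not virtually cyclic, Osin's theorem yields acylindrical hyperbolicity.

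The main obstacle I anticipate is the careful bookkeeping in verifying the projection axioms from the contracting constant $B$ and the quasi-isometry constants of the orbit map $n\mapsto\gamma^n(x_0)$, and, to a lesser extent, the quantitative translation of WPD from $X$ to $\mathcal{P}_K(\mathcal{Y})$; once those are in place, the rest is an invocation of established machinery.
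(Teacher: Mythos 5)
Your outline correctly identifies the Bestvina--Bromberg--Fujiwara projection-complex machinery as the route the paper has in mind (the paper gives no proof of Theorem 2.3 and simply cites \cite{BBF}), and the broad strategy --- build a hyperbolic space out of the orbit of the axis, keep $\gamma$ loxodromic WPD, then invoke Osin --- is the right one. There is, however, a genuine error in the middle: $\gamma$ does \emph{not} act loxodromically on the projection complex $\mathcal{P}_K(\mathcal{Y})$. Since $\gamma$ stabilizes the axis $A$, it fixes the vertex of $\mathcal{P}_K(\mathcal{Y})$ labelled $A$, so $\gamma$ is elliptic there, and the quasi-tree alone carries no WPD information about $\gamma$.

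The space you actually need is the \emph{quasi-tree of metric spaces} $\mathcal{C}_K(\mathcal{Y})$, the second stage of the BBF construction, obtained by blowing up each vertex $Y$ of $\mathcal{P}_K(\mathcal{Y})$ into the subset $Y\subset X$ with its intrinsic metric and joining blown-up vertices by edges between projection points. Because the $Y$'s are uniform quasi-lines (hence uniformly hyperbolic), $\mathcal{C}_K(\mathcal{Y})$ is hyperbolic, $A$ embeds in it quasi-isometrically as a totally geodesic piece, and $\gamma$ translates along that copy of $A$ and so is loxodromic; the WPD transfer then goes through as you sketch, since displacement along $A\subset\mathcal{C}_K(\mathcal{Y})$ is comparable to displacement along $A\subset X$. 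A secondary point worth making explicit: the family $\mathcal{Y}=\{gA : g\in G\}$ should be indexed by $G/E(\gamma)$, where $E(\gamma)$ is the coarse stabilizer (elementary closure) of $\gamma$, i.e.\ translates at bounded Hausdorff distance must be identified; otherwise distinct-but-coarsely-equal translates violate the separation needed for the BBF projection axioms. With these corrections your argument is sound and coincides with the source the paper cites.
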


CAT(0) cube complexes are regarded as generalized trees in higher dimensions. 
The following is a precise definition (see \cite[p.111]{BH}). 

\begin{definition}\label{CAT(0)}
A \textit{cube complex} is a $CW$ complex constructed by gluing together cubes of 
arbitrary (finite) dimension by isometries along their faces. 
Furthermore, the cube complex is \textit{nonpositively curved} if the link of any of its vertices is 
a \textit{simplicial flag} complex (i.e., $n+1$ vertices span an $n$-simplex if and only if they are pairwise adjacent), 
and \textit{CAT(0)} if it is nonpositively curved and simply connected. 
\end{definition}

\begin{definition}
Let $X$ be a CAT(0) cube complex. 
We define an equivalence relation on the edges of $X$ as 
the transitive closure of the relation identifying two parallel edges of a square. 
For an equivalence class, a \textit{hyperplane} is defined as 
the union of the midcubes transverse to the edges belonging to the equivalence class. 
Then, for any edge belonging to the equivalence class, 
the hyperplane is said to be \textit{dual to} the edge.

For a hyperplane $J$, 
we denote 
the union of the cubes intersecting $J$ as $N(J)$; that is, 
the smallest subcomplex of $X$ containing $J$. 
We denote 
the union of the cubes not intersecting $J$ as $X\!\setminus\!\!\setminus J$;
that is, the largest subcomplex of $X$ not intersecting $J$. 
\end{definition}

See \cite{Sageev} for the following. 
\begin{theorem}\label{separate}
Let $X$ be a CAT(0) cube complex and $J$ a hyperplane. 
Then, $X\!\setminus\!\!\setminus J$ has exactly two connected components. 
\end{theorem}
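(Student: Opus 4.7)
The plan is to run a standard parity argument on edge paths relative to $J$, so that the two sides of $J$ are distinguished by a $\mathbb{Z}/2$-valued function on vertices.

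First I would fix a basepoint $v_0\in V(X)$ and define $\phi\colon V(X)\to \mathbb{Z}/2\mathbb{Z}$ by setting $\phi(v)$ to be the number of edges dual to $J$ in an edge path in $X^{(1)}$ from $v_0$ to $v$, reduced modulo $2$. The key subroutine is well-definedness. Because $X$ is simply connected, any two edge paths with the same endpoints can be transformed into one another by a finite sequence of two elementary moves: insertion or deletion of a backtrack $e\bar e$, and the square relation, which replaces two adjacent sides of a $2$-cube by the other two. Backtracking changes the count of $J$-dual edges by $0$ or $\pm 2$. For the square relation the crucial point is that within a single square the two classes of parallel edges are exactly the two local hyperplane classes, so each of the two pairs of adjacent sides meets $J$ in the same number of edges (either $0$ or $1$). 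Hence both moves preserve parity, and $\phi$ is well defined.

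Once $\phi$ is available, I would note that an edge is dual to $J$ if and only if its two endpoints have opposite $\phi$-values, and every other edge connects vertices of equal $\phi$-value. Since $X\!\setminus\!\!\setminus J$ is the subcomplex spanned by cubes that do not meet $J$, every edge in it is non-dual to $J$, so every cube of $X\!\setminus\!\!\setminus J$ has all vertices in a single $\phi$-class. In particular, each connected component of $X\!\setminus\!\!\setminus J$ has a constant $\phi$-value, which shows that there are at most two components. To produce at least two components, I would use that $J$ is a hyperplane and therefore by definition arises from at least one edge $e_0$ dual to $J$; the two endpoints of $e_0$ lie in $X\!\setminus\!\!\setminus J$ (only the interior of $e_0$ together with the cubes containing it are removed), yet they have opposite $\phi$-values and therefore cannot be joined by an edge path avoiding $J$.

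The main obstacle I expect is the well-definedness of $\phi$, specifically checking that the square relation really preserves the parity of the number of $J$-dual edges. This reduces to the elementary but essential observation that opposite edges of a square are equivalent under the relation generating hyperplanes, so that the hyperplanes crossed by the two sides of a square are the same pair, each crossed exactly once. Everything else is bookkeeping, and Theorem \ref{separate} follows.
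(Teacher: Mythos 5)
The paper does not supply a proof of Theorem~\ref{separate}; it simply cites Sageev \cite{Sageev}, so there is no argument in the paper to compare with. Evaluating your proposal on its own: the construction of the $\mathbb{Z}/2$-valued function $\phi$ is sound, and its well-definedness goes through essentially as you describe (in a simply connected cube complex two edge paths with the same endpoints differ by backtracks and pushes across squares, and every such move preserves the parity of $J$-dual edges, because the two pairs of opposite edges of a square are exactly the two local hyperplane classes). The ``at least two'' half is also correct: the endpoints of a $J$-dual edge $e_0$ are vertices of $X\!\setminus\!\!\setminus J$ with different $\phi$-values, and every edge of $X\!\setminus\!\!\setminus J$ is non-dual to $J$ and hence preserves $\phi$, so no edge path in $X\!\setminus\!\!\setminus J$ can join them.

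The gap is in the ``at most two'' half. From ``each connected component of $X\!\setminus\!\!\setminus J$ has constant $\phi$-value'' you conclude that there are at most two components, but this is a non sequitur: constancy of $\phi$ on components only says that the component decomposition \emph{refines} the two-class partition $\{\phi^{-1}(0),\phi^{-1}(1)\}$. A priori either class could break into many components, all sharing the same $\phi$-value. Showing that each set $\phi^{-1}(\epsilon)$ is itself connected inside $X\!\setminus\!\!\setminus J$ --- that is, that two vertices on the same side of $J$ can be joined by an edge path using no $J$-dual edge --- is the genuinely nontrivial content of the theorem, and your proposal does not address it. A standard way to fill this in is to establish that the carrier $N(J)$ is a convex subcomplex isomorphic to $J\times[-1,1]$ and then argue that a path realizing the minimal number of $J$-crossings between two same-side vertices can be pushed, square by square through $N(J)$, so as to eliminate pairs of crossings (equivalently, a disc-diagram or minimal-area argument). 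You flagged well-definedness of $\phi$ as the expected obstacle and called the rest bookkeeping; in fact the well-definedness is fine and the obstacle is precisely the connectedness of the two halfspaces that you passed over.
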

\noindent
The two connected components of $X\!\setminus\!\!\setminus J$ 
are often denoted by $J^+$ and $J^-$, respectively. 

The following is adapted from \cite[Definition 2.7]{KO2}. 
\begin{definition}\label{separatingdef}
Let $X$ be a CAT(0) cube complex. 
For two vertices $x$ and $x'$ in $X$, we call a sequence of hyperplanes 
$P_1,\ldots,P_M$ a \textit{sequence of separating hyperplanes from $x$ to $x'$} 
if the sequence satisfies 
$$x \in P_1^-, P_1^+\supsetneq P_2^+\supsetneq\cdots 
\supsetneq P_{M-1}^+\supsetneq P_M^+\ni x'$$ 
for some connected components $P_i^+$ of $X\!\setminus\!\!\setminus P_i$ 
for all $i\in \{1,\ldots,M\}$. 

For two hyperplanes $J$ and $J'$ in $X$, we call a sequence of hyperplanes 
$P_1,\ldots,P_M$ a \textit{sequence of separating hyperplanes from $J$ to $J'$} 
if the sequence satisfies 
$$J^+ \supsetneq P_1^+\supsetneq P_2^+\supsetneq\cdots 
\supsetneq P_{M-1}^+\supsetneq P_M^+ \supsetneq J'^+$$ 
for some connected components $J^+$ of $X\!\setminus\!\!\setminus J$, 
$J'^+$ of $X\!\setminus\!\!\setminus J'$, 
and $P_i^+$ of $X\!\setminus\!\!\setminus P_i$ 
for all $i\in \{1,\ldots,M\}$. 
\end{definition}
\begin{remark}\label{separatingrem}
When $P_1,\ldots,P_M$ is a sequence of separating hyperplanes from $J$ to $J'$, 
for two vertices $x\in J^-\cup N(J)$ and $x'\in J'^+\cup N(J')$, 
$P_1,\ldots,P_M$ is a sequence of separating hyperplanes from $x$ to $x'$. 
\end{remark}

The following is based on part of \cite[Theorem 3.3]{Genevois} 
and is used in the proof of Theorem~\ref{main}. 
\begin{theorem}\label{criterion}
Let a group $G$ act isometrically on a CAT(0) cube complex $X$. 
Then, $\gamma \in G$ is a WPD contracting element if there exist 
two hyperplanes $J$ and $J'$ satisfying the following: 
\begin{enumerate}
\item[(i)] $J$ and $J'$ are strongly separated; that is, 
no hyperplane intersects both $J$ and $J'$; 
\item[(ii)] $\gamma$ skewers $J$ and $J'$; that is, we have connected components $J^+$ of $X\!\setminus\!\!\setminus J$ and 
$J'^+$ of $X\!\setminus\!\!\setminus J'$ such that 
$\gamma^n (J^+) \subsetneq J'^+ \subsetneq J^+$ for some $n\in\mathbb N$;
\item[(iii)] $\mathrm{stab}(J) \cap \mathrm{stab} (J')$ is finite, where 
$\mathrm{stab}(J)=\{g\in G\  | \ g(J)=J\}$ and $\mathrm{stab} (J')=\{g\in G\  | \ g(J')=J'\}$. 
\end{enumerate}
\end{theorem}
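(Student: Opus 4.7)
The plan is to verify separately that $\gamma$ is contracting (which includes being loxodromic) and that $\gamma$ is WPD, both via the combinatorial ``axis'' built from iterating the skewering condition (ii). Fix a base vertex $x_0 \in N(J) \cap J^+$. Iterating (ii) produces a bi-infinite strictly nested sequence of half-spaces
\[
\cdots \supsetneq \gamma^{-n}(J^+) \supsetneq \gamma^{-n}(J'^+) \supsetneq J^+ \supsetneq J'^+ \supsetneq \gamma^{n}(J^+) \supsetneq \gamma^{n}(J'^+) \supsetneq \cdots,
\]
so the orbit $\{\gamma^{kn}(x_0)\}_{k\in\mathbb Z}$ crosses the hyperplanes $J_k := \gamma^{kn}(J)$ and $J'_k := \gamma^{kn}(J')$ in order. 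The combinatorial distance grows linearly in $k$, exhibiting $k \mapsto \gamma^{kn}(x_0)$ as a quasi-isometric embedding and hence making $\gamma$ loxodromic.

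For the contracting property, I would exploit that condition (i) is $\gamma$-equivariant, so every consecutive pair $(J_k,J'_k)$ (and $(J'_k,J_{k+1})$) along the axis is strongly separated. A standard consequence in CAT(0) cube complexes is then that the orbit is contracting: any combinatorial ball disjoint from the orbit can cross at most one of any strongly separated pair, so all its hyperplane crossings with ``axis pairs'' are confined to a single block; the nearest-point projection of the ball onto the orbit therefore has diameter bounded by the length of one such block, uniformly in the ball.

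For WPD, fix $D\ge 0$ and choose $M$ so that the axis segment from $x_0$ to $\gamma^M(x_0)$ crosses a number of pairs $(J_k,J'_k)$ far larger than $D$. If $g\in G$ satisfies $d_X(x_0,gx_0)\le D$ and $d_X(\gamma^M x_0, g\gamma^M x_0)\le D$, then the set of hyperplanes separating $gx_0$ from $g\gamma^M x_0$ is the $g$-image of that separating $x_0$ from $\gamma^M x_0$ and differs from the latter by at most $2D$ hyperplanes. Because consecutive axis hyperplanes are strongly separated, no hyperplane other than an axis hyperplane can cross two consecutive axis pairs, so this near-coincidence forces the existence of a ``middle'' index $k$ and a shift $m$ in a bounded range (depending on $D$) with $g(J_k)=J_{k+m}$ and $g(J'_k)=J'_{k+m}$. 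After multiplying by $\gamma^{-(k+m)n}\cdot (\text{element of }\mathrm{stab}(J)\cap\mathrm{stab}(J'))^{-1}\cdot\gamma^{kn}$, one reduces to an element of $\mathrm{stab}(J)\cap\mathrm{stab}(J')$, which is finite by (iii); combined with the boundedness of the shift $m$, this shows the set of such $g$ is finite.

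The main obstacle is the WPD step: one must promote the metric hypothesis---that $g$ moves two points by at most $D$---into the combinatorial rigidity that $g$ carries an entire axis pair exactly onto another axis pair. Strong separation is precisely the ingredient that performs this ``combinatorial snap'' from an approximate to an exact preservation, since it prevents the crossed-hyperplane sequence from drifting across many alternative nearby hyperplanes; without it the argument would produce only a perturbative statement and not finiteness.
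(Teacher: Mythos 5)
The paper does not prove this statement: it is quoted verbatim from Genevois (cited there as ``part of [Genevois, Theorem 3.3]'') and used as a black box in the proof of Theorem~\ref{main}. So there is no in-paper proof to compare against, and your proposal is an independent reconstruction of Genevois's argument. The overall strategy you describe---iterating the skewering to build a nested chain of half-spaces, propagating strong separation by $\gamma$-equivariance, and for WPD using the ``combinatorial snap'' to reduce to the finite group $\mathrm{stab}(J)\cap\mathrm{stab}(J')$---is indeed the standard route (Caprace--Sageev, Chatterji--Martin, Genevois).

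However, the contracting step has a genuine gap. The assertion that ``any combinatorial ball disjoint from the orbit can cross at most one of any strongly separated pair'' is false: strong separation of $J_k$ and $J'_k$ only says that no hyperplane crosses both of them; a large ball disjoint from $\{\gamma^{kn}x_0\}$ can perfectly well intersect both $J_k$ and $J'_k$ (it merely has to meet the region between them without containing an orbit point), and more generally can meet arbitrarily many of the $J_k$'s. So the ``single block'' conclusion, and hence the uniform bound on projection diameter, does not follow from what you wrote. The missing ingredient is the gate/bridge lemma for strongly separated hyperplanes (the combinatorial projection of $N(J'_k)$ onto $N(J_k)$ is a single vertex), from which one deduces that the nearest-point projection of any set living past $J'_k$ onto anything on the far side of $J_k$ factors through a uniformly bounded set; that is what yields the contracting constant. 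A smaller remark: you also assert that $(J'_k,J_{k+1})$ is strongly separated. Your chain does give that any two of the $J_k$'s (and any two of the $J'_k$'s) are strongly separated, by sandwiching an intermediate $J'_m$ or $J_m$ between them; but nothing in (i)--(iii) forces $J'_k$ and $J_{k+1}$ to be strongly separated, and fortunately the argument does not require it. With the bridge lemma substituted for the false ball statement, the rest of your outline---in particular the WPD reduction, which correctly bounds the shift $m$ and conjugates into the finite stabilizer intersection---is sound at the sketch level.
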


\subsection{Defining graphs of Artin groups}\label{defining_graph}
We present a precise description of 
the defining graph of an Artin group and introduce some related graphs. 

Let $V$ be a finite set. 
Denote the diagonal set as $\mathrm{diag}(V\times V):=\{(v,w)\in V\times V |\ v=w\}$. 
We consider the involution on the off-diagonal set 
$$\iota:V\times V\setminus\mathrm{diag}(V\times V)\ni (v,w)\mapsto (w,v)\in V\times V\setminus\mathrm{diag}(V\times V).$$ 
Any $e\in V\times V\setminus\mathrm{diag}(V\times V)$ is often presented as $(s_e,t_e)$. 
Then, for any $e\in V\times V\setminus\mathrm{diag}(V\times V)$, 
we have $s_{\iota(e)}=t_e$ and $t_{\iota(e)}=s_e$. 
We take a symmetric map 
$$\tilde{\mu}:V\times V\setminus\mathrm{diag}(V\times V) \to \mathbb{Z}_{\ge 2}\cup \{\infty\}.$$ 
Here, `\textit{symmetric}' means that $\tilde{\mu}\circ\iota=\tilde{\mu}$ is satisfied. 
Then, we have 
$$V\times V\setminus\mathrm{diag}(V\times V)=\bigsqcup_{m\in \mathbb{Z}_{\ge 2}\cup \{\infty\}}\tilde{\mu}^{-1}(m).$$
We now have a finite simple labeled graph $\Gamma$ 
with the vertex set $V(\Gamma)=V$, 
the edge set $E(\Gamma)=\bigsqcup_{m\in \mathbb{Z}_\ge 2}\tilde{\mu}^{-1}(m)$, 
and the labeling $\mu:=\tilde{\mu}|_{E(\Gamma)}$. 
The Artin group $A_\Gamma$ associated with $\Gamma$ 
is defined by the presentation (\ref{pres_standard_eq}),
and $\Gamma$ is called the \textit{defining graph} of $A_\Gamma$. 

For convenience, we define 
two other finite simple graphs, $\Gamma^c$ and $\Gamma^t$, as follows. 
$\Gamma^c$ is the finite simple graph 
with the vertex set $V(\Gamma^c)=V$ 
and the edge set $E(\Gamma^c)=\tilde{\mu}^{-1}(\infty)$. 
This is called the \textit{complement graph} of $\Gamma$. 
The \textit{Coxeter graph} $\Gamma^t$ is the finite simple graph with the vertex set $V(\Gamma^t)=V$ 
and the edge set $E(\Gamma^t)=\bigsqcup_{m\in \mathbb{Z}_{\ge 3}\cup\{\infty\}}\tilde{\mu}^{-1}(m)$. 

\subsection{Properties of Artin groups}\label{artin_group}
We summarize some properties of Artin groups. 
Let $A_\Gamma$ be an Artin group associated with a defining graph $\Gamma$ with the vertex set $V(\Gamma)=V$ and the edge set $E(\Gamma)=E$, 
as in Section~\ref{defining_graph}. 
Lemma~\ref{Godelle} is essentially used in the proof of Theorem~\ref{main}. 

By the work of van der Lek (see \cite{Paris}, \cite{van}), the following are well known and often used without mention in this paper. 
\begin{lemma}\label{van}
\begin{enumerate}
\item For any subset $U\subset V$, 
the subgroup of $A_\Gamma$ generated by $U$, called a standard parabolic subgroup, 
is itself an Artin group associated 
with the full subgraph of $\Gamma$ spanned by $U$. 
(We denote this subgroup by $A_U$. 
When $U$ is empty, we define $A_\emptyset =\{1\}$.)
\item For any subsets $U, U'\subset V$, $A_{U}\cap A_{U'}=A_{U\cap U'}$. 
\end{enumerate}
\end{lemma}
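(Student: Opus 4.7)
My plan is to follow van der Lek's topological approach, which realizes $A_\Gamma$ as the fundamental group of the quotient by $W_\Gamma$ of the complement $Y_\Gamma$ of the complexified reflection hyperplane arrangement associated with $W_\Gamma$. Under this identification, for any subset $U \subset V(\Gamma)$, the subarrangement associated with the Coxeter subgroup $W_{\Gamma_U}$ determines a natural subspace whose fundamental group is precisely the subgroup of $A_\Gamma$ generated by $U$. Both parts of the lemma then become topological questions about this inclusion of subspaces.

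For part (1), I would show that the inclusion of these complement spaces is $\pi_1$-injective, which identifies the subgroup generated by $U$ with the abstract Artin group on the full subgraph $\Gamma_U$. Van der Lek achieves injectivity by a careful topological argument that exploits the hyperplane arrangement structure---pushing off along the fixed subspaces of the reflections indexed by $V \setminus U$---which produces enough homotopical control on loops to deduce that a loop in the $\Gamma_U$-complement which bounds a disk in the $\Gamma$-complement must already bound a disk in the $\Gamma_U$-complement. A modern alternative, when available, is to exhibit $\mathrm{Sal}(\Gamma_U)$ as a locally convex subcomplex of the Salvetti complex $\mathrm{Sal}(\Gamma)$ and invoke general $\pi_1$-injectivity principles. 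For part (2), the inclusion $A_{U \cap U'} \subseteq A_U \cap A_{U'}$ is immediate from part (1). For the reverse, I would argue geometrically: a representing loop for $g \in A_U \cap A_{U'}$ is freely homotopic into both the $\Gamma_U$-subspace and the $\Gamma_{U'}$-subspace, and a compatibility check made possible by the arrangement-theoretic interpretation---where the two subspaces intersect in exactly the $\Gamma_{U \cap U'}$-subspace---yields a loop inside the intersection, hence an element of $A_{U \cap U'}$.

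The main obstacle is clearly part (1). Injectivity of standard parabolic inclusions is a genuinely deep statement for which no purely algebraic proof from the presentation (\ref{pres_standard_eq}) is known; direct Tietze-transformation arguments fail because the braid relations mix generators in ways that obstruct naive retractions to the subgroup. Every known argument therefore passes through nontrivial topology or geometry, whether via hyperplane complements as in van der Lek, or via subsequent CAT(0)-geometric models in special cases. Once (1) is granted, (2) is comparatively routine, but without it one cannot even make precise sense of the equation $A_U \cap A_{U'} = A_{U \cap U'}$ as an identity of subgroups isomorphic to specified Artin groups.
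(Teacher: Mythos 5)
The paper does not prove this lemma: it states both parts as well-known facts and cites van der Lek's thesis and Paris's survey, which is exactly the source you invoke. Your sketch for part (1) is a fair high-level description of van der Lek's topological proof of $\pi_1$-injectivity (the ``pushing off along fixed subspaces'' phrase captures the retraction argument, and the Salvetti-complex alternative is a legitimate modern variant), so as far as it goes it matches the cited approach.

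There is, however, a genuine gap in your argument for part (2). You claim that because a loop representing $g \in A_U \cap A_{U'}$ is freely homotopic into the $\Gamma_U$-subspace and also freely homotopic into the $\Gamma_{U'}$-subspace, a ``compatibility check'' produces a homotopy into the intersection subspace. That inference is false in general: in the cylinder $S^1 \times [0,1]$ with $A = S^1 \times \{0\}$ and $B = S^1 \times \{1\}$, the core circle is homotopic into $A$ and into $B$ but $A \cap B = \emptyset$. The hard content of part (2) is precisely this compatibility, and it is not a formal consequence of part (1) plus the arrangement-theoretic picture. Van der Lek obtains it from a carefully constructed deformation retraction of the (covering of the) complement onto the sub-complement that is simultaneously well-behaved for nested pairs of parabolics --- one needs the retraction to be equivariant and to restrict correctly on the intersection, which is the real technical work. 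Alternatively, Paris's algebraic treatment uses normal-form/monoid machinery for the same purpose. Your sketch names a ``compatibility check made possible by the arrangement-theoretic interpretation'' but this is exactly the step that requires the substantive argument, so as written the proof of part (2) is incomplete.
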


The following is a consequence of a nontrivial result by Godelle \cite{Godelle}. 
\begin{lemma}\label{Godelle}
Take an element $v\in V$ 
and a subset $U\subset V$. Suppose that $v\not\in U$. 
Set $W:=\{w\in U\mid \mu(v,w)=2\}$.
Then $A_{U}\cap vA_{U}v^{-1}=A_{W}$. 
In particular, for any $u\in U\setminus W$, 
$A_{U}\cap vA_{U}v^{-1}\subset A_{U\setminus\{u\}}$. 
\end{lemma}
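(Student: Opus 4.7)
The plan is to prove the asserted equality by two separate inclusions, and then derive the ``In particular'' clause as a short formal consequence.

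For the easy inclusion $A_{V''}\subseteq A_{V'}\cap vA_{V'}v^{-1}$, I would first observe that every generator $v''\in V''$ satisfies $\mu(v,v'')=2$, so the corresponding Artin relation reduces to $vv''=v''v$. Hence $v$ commutes with each element of $V''$, which gives $vA_{V''}v^{-1}=A_{V''}$. Combined with $A_{V''}\subseteq A_{V'}$ (which is immediate from $V''\subseteq V'$ via Lemma \ref{van}), this yields $A_{V''}\subseteq A_{V'}\cap vA_{V'}v^{-1}$.

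For the reverse inclusion $A_{V'}\cap vA_{V'}v^{-1}\subseteq A_{V''}$, I would invoke the theorem of Godelle \cite{Godelle} describing the intersection of a standard parabolic subgroup with its conjugate by a single standard generator. Applied with the generator $v$ and the subset $V'\subseteq V$ satisfying $v\notin V'$, Godelle's result identifies this intersection as the standard parabolic subgroup generated by the elements of $V'$ that commute with $v$; by the presentation of $A_\Gamma$, an element $v''\in V'$ commutes with $v$ precisely when $\mu(v,v'')=2$, so the generating set in question is exactly $V''$. This is the substantive part of the proof, and the main obstacle: the input is a nontrivial theorem relying on detailed combinatorics of Artin-Tits groups, and the work reduces to matching Godelle's notation and hypotheses with ours. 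I would not reprove it from scratch.

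Finally, for the ``In particular'' statement, observe that if $v'\in V'\setminus V''$ then $v'\notin V''$ by definition of $V''$, so $V''\subseteq V'\setminus\{v'\}$. Passing to the corresponding standard parabolic subgroups (again via Lemma \ref{van}) gives $A_{V''}\subseteq A_{V'\setminus\{v'\}}$, and combining with the equality just established yields $A_{V'}\cap vA_{V'}v^{-1}\subseteq A_{V'\setminus\{v'\}}$, completing the argument.
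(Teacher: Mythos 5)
Your overall architecture (easy inclusion, Godelle-powered reverse inclusion, formal derivation of the ``in particular'' clause) matches the paper's, and your argument for $A_{V''}\subseteq A_{V'}\cap vA_{V'}v^{-1}$ and for the final consequence are both correct. However, the reverse inclusion has a real gap: you attribute to Godelle's theorem more than the paper actually extracts from it. The paper reads \cite[Theorem 0.6]{Godelle} as saying only that $vA_{V'}v^{-1}\cap A_{V'}=vA_{V'''}v^{-1}$ for \emph{some} subset $V'''\subseteq V'$, not that $V'''$ consists of the generators commuting with $v$. The identification $V'''=V''$ requires a further argument that your proposal omits entirely.

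Concretely, after citing Godelle the paper notes $V''\subseteq V'''$ and then argues by contradiction: take $v'\in V'''\setminus V''$; from $v'\in A_{V'}\cap v^{-1}A_{V'}v$ deduce $vv'v^{-1}\in A_{\{v,v'\}}\cap A_{V'}=A_{\{v'\}}$ (using Lemma \ref{van}(2) and $\{v,v'\}\cap V'=\{v'\}$), so $vv'v^{-1}=(v')^m$; apply the retraction $A_{\{v,v'\}}\to A_{\{v'\}}$, $v,v'\mapsto v'$, to force $m=1$; conclude $vv'v^{-1}=v'$, hence $\mu(v,v')=2$, contradicting $v'\notin V''$. This retraction step is the genuine mathematical content beyond the citation, and it is exactly what pins down $V'''$. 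Put differently, even under your reading of Godelle as identifying the intersection with ``the parabolic on the elements of $V'$ commuting with $v$,'' you would still need to prove that a standard generator of $A_\Gamma$ commutes with $v$ only when the edge label is $2$ --- and that is precisely the fact the retraction argument supplies.
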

\begin{proof}
Clearly we have $A_{W}\subset A_{U}\cap vA_{U}v^{-1}$.
In terms of \cite{Godelle}, $v$ is $(U,U)$-reduced in the Coxeter group $W_{\Gamma}$.
Applying \cite[Proposition 3.3]{Godelle} to our setting,
we have some $U'\subset U$ such that
$vA_{U}v^{-1}\cap A_{U}= A_{U'}$.
Hence $W\subset U'$.
Assume that $W\subsetneq U'$.
Take $u'\in U'\setminus W$.
Then $u'\in A_{U'}=vA_{U}v^{-1}\cap A_{U}$
and thus $v^{-1}u'v\in A_{U}$.
By $v^{-1}u'v\in A_{\{v,u'\}}$ and $\{v,u'\}\cap U=\{u'\}$,
we have $v^{-1}u'v\in A_{\{v,u'\}}\cap A_{U}=A_{\{u'\}}$.
Therefore, we have $m\in \mathbb Z$ such that $v^{-1}u'v=(u')^m$.
Since the surjection $A_{\{v,u'\}}\to A_{\{u'\}}$ defined by $v, u'\mapsto u'$ gives
$v^{-1}u'v\mapsto u'$ and $(u')^m\mapsto (u')^m$, we see $u'={(u')}^m$ and $m=1$.
Hence we have $v^{-1}u'v=u'$ in $A_\Gamma$.
This means $\mu(v,u')=2$, contradicting $u'\notin W$.
Hence $U'=W$. Therefore, $vA_{U}v^{-1}\cap A_{U}=A_{W}$.
\end{proof}

\subsection{Clique-cube complexes and actions on them}\label{abs_clique}
We consider clique-cube complexes and the actions on them 
following \cite{Charney}. 
Let $A_\Gamma$ be an Artin group associated with a defining graph $\Gamma$ with the vertex set $V(\Gamma)=V$ and the edge set $E(\Gamma)=E$, 
as in Section~\ref{defining_graph}. 
We say that $U$ \textit{spans a clique} in $\Gamma$ if any two elements of $U$ are joined by an edge in $\Gamma$. 

\begin{definition}[{\cite[Definition 2.1]{Charney}}]\label{clique cube}
Consider the set 
$$\Delta_\Gamma = \{U \subset V |\ U\text{ spans a clique in }\Gamma,\text{ or }U = \emptyset\}.$$ 
The \textit{clique-cube complex} $C_\Gamma$ is the cube complex 
whose vertices (i.e., $0$-dimensional cubes) are cosets $gA_U$ ($g\in A_\Gamma, U\in \Delta_\Gamma$), 
where two vertices $gA_U$ and $hA_{U'}$ 
are joined by an edge (i.e., a $1$-dimensional cube) in $C_\Gamma$ 
if $gA_U\subset hA_{U'}$ and $U$ and $U'$ differ by a single generator. 
Note that, in this case, 
we can always replace $h$ by $g$ so that $hA_{U'}=gA_{U'}$. 
More generally, two vertices $gA_U$ and $gA_{U'}$ with $gA_U\subset gA_{U'}$ 
span a $\#(U'\setminus U)$-dimensional cube $[gA_U , gA_{U'}]$ in $C_\Gamma$. 
\end{definition}
\noindent See Figure~\ref{reduced-complex_fig}.

\begin{figure}
\begin{center}
\includegraphics[width=10cm,pagebox=cropbox,clip]{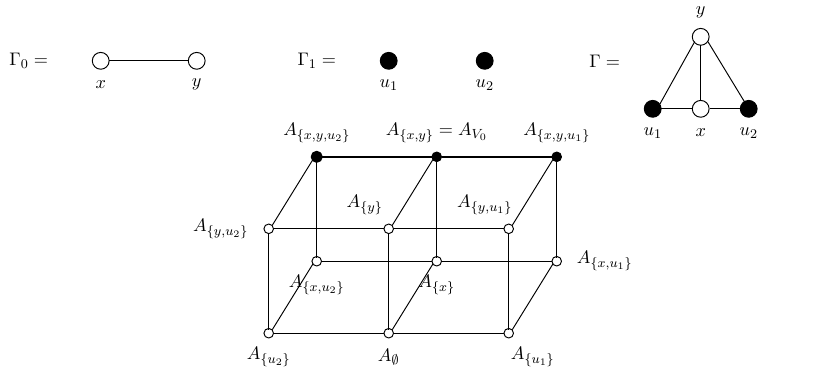} 
\caption{For the above graph $\Gamma$, the below cube complex is a part of $C_{\Gamma}$ that is a fundamental domain of $C_{\Gamma}$ with respect to the action of $A_{\Gamma}$. 
We define a subcomplex $C_{\Gamma,\Gamma_0}$ of $C_{\Gamma}$ in Section~\ref{clique}.
The subcomplex consisting of colored vertices and thick edges is a part of $C_{\Gamma,\Gamma_0}$, 
which is a fundamental domain of $C_{\Gamma,\Gamma_0}$ with respect to the action of $A_{\Gamma}$.}
\label{reduced-complex_fig}
\end{center}
\end{figure}

We endow a metric on $C_{\Gamma}$ in the standard way for a cube complex.
That is, we identify all cubes in $C_{\Gamma}$ with Euclidean unit cubes and give $C_{\Gamma}$ the path metric.
Note that $C_{\Gamma}$ is connected.

\begin{theorem}[{\cite[Theorem 2.2]{Charney}}]\label{C_Gamma_thm}
The clique-cube complex $C_\Gamma$ is a finite dimensional CAT(0) cube complex. In particular, $C_{\Gamma}$ is a complete CAT(0) space. 
\end{theorem} 
\noindent Note that completeness follows from finite dimensionality of $C_{\Gamma}$ (\cite[Theorem 7.19]{BH}).

The group $A_\Gamma$ acts on the clique-cube complex $C_\Gamma$ by left multiplication, 
$h\cdot gA_U = (hg)A_U$. 
This action preserves the cubical structure and is isometric. 
The action is also cocompact with a fundamental domain $\bigcup_{U\in \Delta_\Gamma}[A_\emptyset, A_U]$,
where $[A_\emptyset, A_U]$ is a $\# U$-dimensional cube spanned by the two vertices $A_\emptyset$ and $A_U$ in $C_\Gamma$. 
See Figure~\ref{reduced-complex_fig}.
However, the action is not proper. 
In fact, the stabilizer of a vertex $gA_U$ is the conjugate subgroup 
$gA_U g^{-1}$, so all vertices other than those in the orbit of $A_\emptyset$ have infinite stabilizers. 
We also note that $C_\Gamma$ is not a proper metric space because it contains vertices of infinite valence. 
Additionally, $C_\Gamma$ has infinite diameter if and only if $\Gamma$ itself is not a clique. 

\begin{remark}[{\cite[Section 2]{Charney}}]\label{label}
Each edge in $C_\Gamma$ can be labeled with a generator in $V$. 
Indeed, the edge between $gA_U$ and $gA_{U\sqcup \{v\}}$ is labeled by $v$. 
Any two parallel edges in a cube have the same label, 
so we can also label the hyperplane dual to such an edge by $v$ 
and say that such a hyperplane is \textit{of $v$-type}. 
Every hyperplane of $v$-type is a translation of the hyperplane 
dual to the edge between $A_\emptyset$ and $A_{\{v\}}$. 
If a hyperplane of $v$-type crosses another hyperplane of $v'$-type, then $(v,v')\in E$. 
In particular, two different hyperplanes of the same type do not cross each other.
\end{remark}

\subsection{Joins of graphs}\label{join}
\begin{definition}
Let $\Lambda\neq \emptyset$ be an index set. 
The \textit{join} $\ast_{\alpha\in \Lambda}\Gamma_\alpha$ 
of simple graphs $\Gamma_\alpha$ ($\alpha\in \Lambda$) is defined as a simple graph with 
the vertex set $$V(\ast_{\alpha\in \Lambda} \Gamma_\alpha):=\bigsqcup_{\alpha\in \Lambda}  V(\Gamma_\alpha)$$ 
and the edge set $$E(\ast_{\alpha\in \Lambda} \Gamma_\alpha):=\bigsqcup_{\alpha\in \Lambda} E(\Gamma_\alpha)\sqcup 
\bigsqcup_{\alpha,\beta\in \Lambda, \alpha\neq \beta}\{(v_\alpha, v_\beta)\mid v_\alpha\in V(\Gamma_\alpha), v_\beta\in V(\Gamma_\beta)\}.$$

A simple graph $\Gamma$ is said to be \textit{decomposable as a join} 
if there exists an index set $\Lambda$ with $\# \Lambda\ge 2$ and 
subgraphs $\Gamma_\alpha$ ($\alpha\in \Lambda$) of $\Gamma$ 
such that $\Gamma=\ast_{\alpha \in \Lambda}\Gamma_\alpha$. 
This is called a \textit{join decomposition} of $\Gamma$ into factors $\Gamma_\alpha$ ($\alpha\in \Lambda$). 
$\Gamma$ is said to be \textit{indecomposable as a join} 
if it is not decomposable as a join. 

\end{definition}
\begin{remark}
A simple graph $\Gamma$ is indecomposable as a join if and only if its complement graph $\Gamma^c$ is connected.
\end{remark}

The following is a well-known fact (See \cite[Lemma 3.4]{KO2}). 
\begin{lemma}\label{join-decomposition_lem}
Let $\Gamma$ be a simple graph. 
Suppose that $\Gamma$ is decomposable as a join. 
Then, $\Gamma$ has a unique join decomposition into indecomposable factors 
$$\Gamma=\ast_{\alpha\in \Lambda}\Gamma_\alpha,$$
and the decomposition of $\Gamma^c$ into connected components is given by 
$$\Gamma^c=\bigsqcup_{\alpha\in \Lambda}(\Gamma_\alpha)^c.$$
\end{lemma}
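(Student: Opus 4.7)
The plan is to reduce everything to the elementary observation, already noted in the remark, that a simple graph is indecomposable as a join if and only if its complement is connected. The bridge is the direct computation that joins in $\Gamma$ correspond to disjoint unions in $\Gamma^c$: if $\Gamma = \ast_{\alpha \in \Lambda} \Gamma_\alpha$ is \emph{any} join decomposition (not yet assumed indecomposable), then $\Gamma^c = \bigsqcup_{\alpha \in \Lambda}(\Gamma_\alpha)^c$ as graphs. Indeed, for $v_\alpha \in V(\Gamma_\alpha)$ and $v_\beta \in V(\Gamma_\beta)$ with $\alpha \neq \beta$, the pair $(v_\alpha, v_\beta)$ is by definition an edge of $\Gamma$, hence not an edge of $\Gamma^c$; and within a fixed $V(\Gamma_\alpha)$, the edges of $\Gamma^c$ are precisely the non-edges of $\Gamma_\alpha$, i.e.\ the edges of $(\Gamma_\alpha)^c$. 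I would establish this identification as a first, routine step.

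For existence of the indecomposable decomposition, I would then take the (unique) decomposition $\Gamma^c = \bigsqcup_{\alpha \in \Lambda} C_\alpha$ into connected components and let $\Gamma_\alpha$ denote the full subgraph of $\Gamma$ spanned by $V(C_\alpha)$. Because the induced subgraph of $\Gamma^c$ on $V(C_\alpha)$ is exactly $(\Gamma_\alpha)^c$, we have $(\Gamma_\alpha)^c = C_\alpha$, which is connected, so the remark makes each $\Gamma_\alpha$ indecomposable as a join. The fact that no edges of $\Gamma^c$ run between distinct $C_\alpha$ translates directly into the presence of all cross-edges in $\Gamma$, giving $\Gamma = \ast_{\alpha \in \Lambda} \Gamma_\alpha$ from the definition of join.

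For uniqueness, suppose $\Gamma = \ast_{\beta \in \Lambda'} \Gamma'_\beta$ is any indecomposable join decomposition. The first step yields $\Gamma^c = \bigsqcup_{\beta \in \Lambda'} (\Gamma'_\beta)^c$, and the remark ensures each $(\Gamma'_\beta)^c$ is connected. But a disjoint-union decomposition of a graph into connected subgraphs must coincide with the decomposition into connected components, so $\{V(\Gamma'_\beta)\}_{\beta}$ matches $\{V(C_\alpha)\}_\alpha$; since every $\Gamma_\alpha$ and $\Gamma'_\beta$ is forced to be the full subgraph of $\Gamma$ on its vertex set (the presence of all cross-edges in a join leaves no choice of edges inside the factors beyond the induced ones), the two decompositions are identical. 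The argument is essentially definitional and I do not anticipate a real obstacle; the only point that deserves care is verifying that the factors of an abstract join decomposition are literally induced subgraphs on their vertex sets, which is what lets the connected-component decomposition of $\Gamma^c$ pin down the factors uniquely.
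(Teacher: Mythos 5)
The paper does not actually prove this lemma; it simply cites it as a well-known fact, pointing to Lemma 3.4 of the authors' earlier work \cite{KO2}. So there is no in-paper argument to compare yours against. That said, your proof is correct and is the standard one: you establish the bijective correspondence between join decompositions of $\Gamma$ and disjoint-union decompositions of $\Gamma^c$, deduce existence by taking the connected components of $\Gamma^c$ and passing to the induced subgraphs of $\Gamma$ on those vertex sets, and deduce uniqueness from the uniqueness of the decomposition of a graph into connected components. You also correctly flag the one point that needs care, namely that in the join $\ast_\alpha \Gamma_\alpha$ each factor $\Gamma_\alpha$ is the full (induced) subgraph of $\Gamma$ on $V(\Gamma_\alpha)$; this follows from the definition of the join's edge set as a disjoint union, and it is exactly what lets the vertex partition determine the factors. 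No gaps.
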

\noindent For convenience, even if $\Gamma$ is indecomposable as a join,
we take $\Lambda$ consisting of a single index $\alpha$,
define $\Gamma_{\alpha}=\Gamma$ and regard
$\Gamma=\ast_{\alpha\in \Lambda}\Gamma_{\alpha}$ as the unique join decomposition of $\Gamma$
(by a single indecomposable factor).

\begin{figure}
\begin{center}
\includegraphics[width=8cm,pagebox=cropbox,clip]{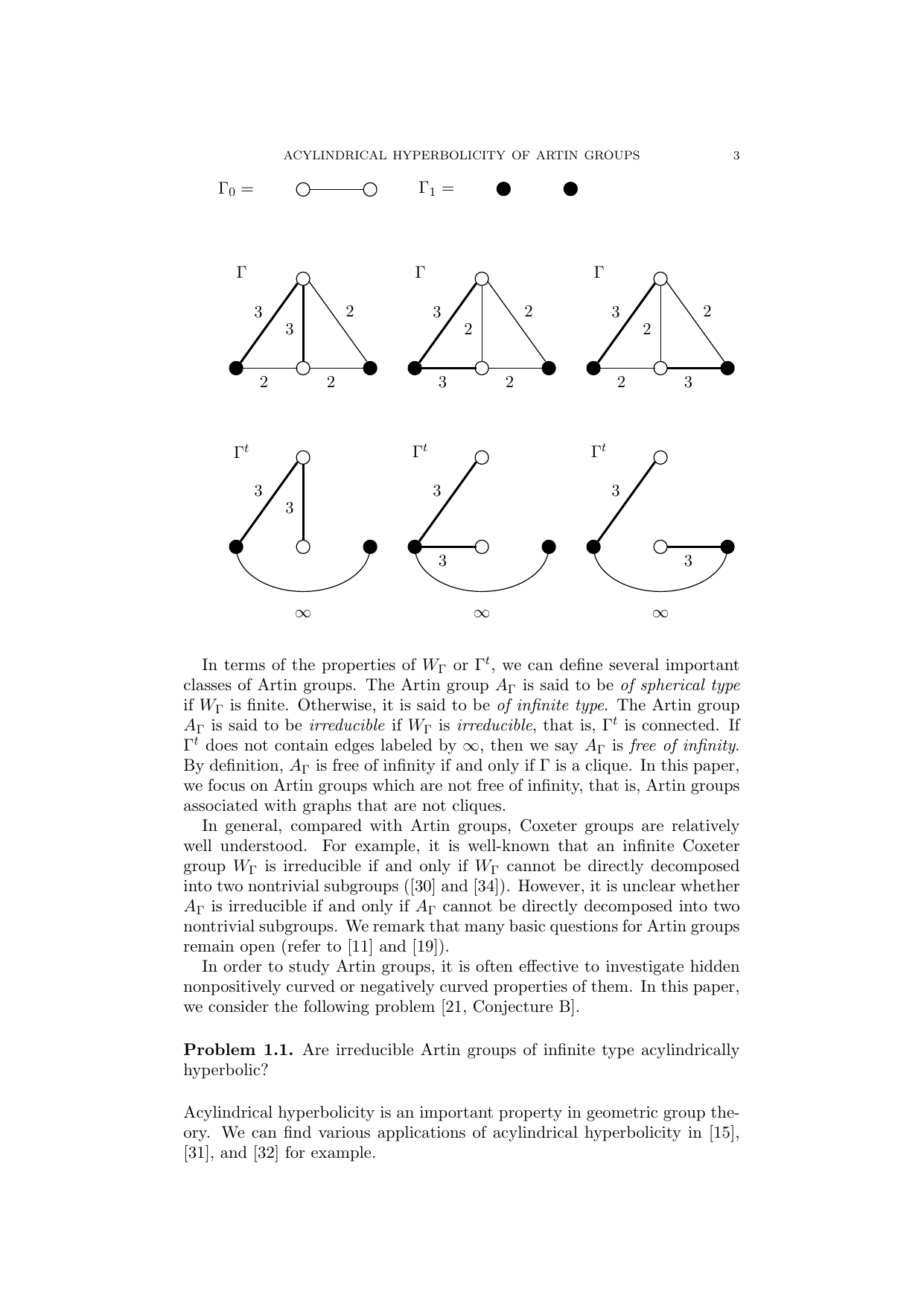} 
\caption{The defining graphs $\Gamma$ 
and the associated Coxeter graphs $\Gamma^t$. 
Each $\Gamma$ has a join decomposition $\Gamma=\Gamma_0\ast \Gamma_1$, where $\Gamma_0$ is the maximal clique factor and $\Gamma_1$ 
is an indecomposable factor with two vertices.}\label{new_ex_fig}
\end{center}
\end{figure}

\begin{figure}
\begin{center}
\includegraphics[width=10cm,pagebox=cropbox,clip]{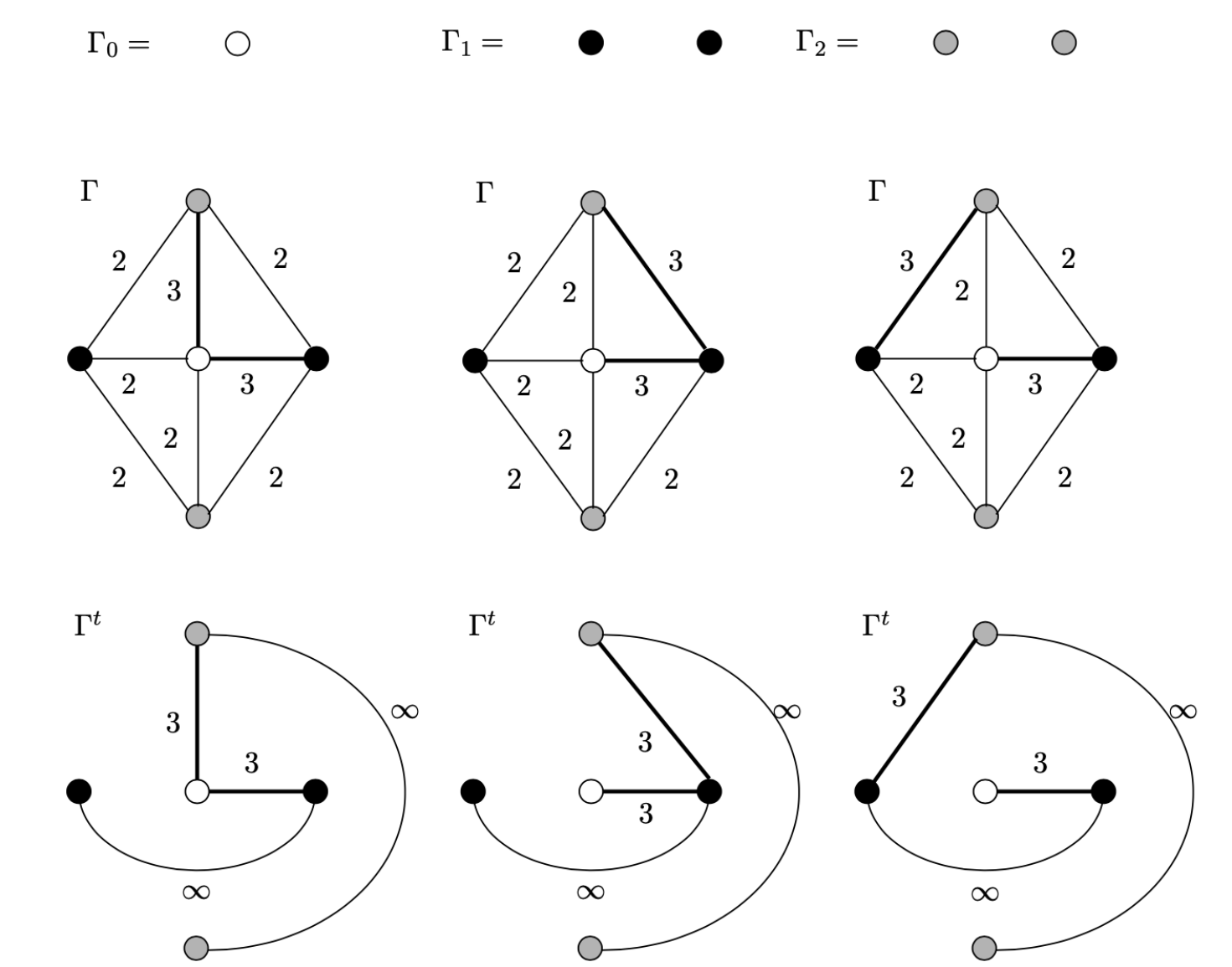} 
\caption{The defining graphs $\Gamma$ 
and the associated Coxeter graphs $\Gamma^t$. 
Each $\Gamma$ has a join decomposition $\Gamma=\Gamma_0\ast \Gamma_1\ast \Gamma_2$, where $\Gamma_0$ is the maximal clique factor and 
each of $\Gamma_1$ and 
$\Gamma_2$ is an indecomposable factor with two vertices.}\label{new_ex_fig2}
\end{center}
\end{figure}

\begin{definition}\label{decompositon}
Let $\Gamma$ be a simple graph. 
For the unique join decomposition into indecomposable factors 
$\Gamma=\ast_{\alpha\in \Lambda}\Gamma_\alpha$, 
we set $\Lambda_0:=\{\alpha\in \Lambda\mid \ \#\Gamma_\alpha=1\}$ and 
$\Gamma_0:=\ast_{\alpha\in \Lambda_0}\Gamma_\alpha$. 
We call $\Gamma_0$ the \textit{maximal clique factor} of $\Gamma$. 
Also we set $\Lambda_\ast:=\Lambda\setminus\Lambda_0$ and 
$\Gamma_\ast:=\ast_{\alpha\in \Lambda_\ast}\Gamma_\alpha$.  
We call $\Gamma$ a \textit{cone} if $\Lambda_0\neq \emptyset$.
\end{definition}
\noindent See Figures~\ref{example_fig}, \ref{example_fig2}, \ref{new_ex_fig} and \ref{new_ex_fig2}. 

We have $\Gamma=\Gamma_0\ast \Gamma_\ast$. 
Clearly, $\Gamma$ is a clique if and only if $\Lambda=\Lambda_0$. 

\subsection{Notation on matrices}\label{matrix}
For natural numbers $k$, $m$, and a set $X$,
let $\mathcal{M}(k,m; X)$ be the set of all $k\times m$ matrices whose entries are in $X$.
For $i\in \{1,\ldots, k\}$ and $l\in \{1,\ldots, m\}$,
let $M[i,l]$ be the $(i,l)$-component of $M$.
Let $M[i,\ ]$ be the $i$-th row and 
$M[\ ,l]$ be the $l$-th column of the matrix $M$.
For $l_1, l_2\in \{1,\ldots, m\}$ with $l_1\leq l_2$, 
we define $M[\ ,l_1:l_2]\in \mathcal{M}(k, l_2-l_1+1; X)$
by 
\begin{align}
\left(M[\ ,l_1:l_2]\right)[\ ,l]=M[\ ,l_1-1+l]\quad(1\leq l\leq l_2-l_1+1).
\end{align}
For two matrices $M_j\in \mathcal{M}(k, m_j;X)$ $(j=1,2)$ with the same number of rows,
we define $cbind(M_1,M_2)\in \mathcal{M}(k, m_1+m_2;X)$ as a matrix constructed from $M_1,M_2$ by binding the columns.
That is,  
\begin{align}\label{cbind-def}
cbind(M_1,M_2)[\ ,l]=
\begin{cases}
M_1[\ ,l]\quad(1\leq l\leq m_1)\\
M_2[\ ,l-m_1]\quad(m_1+1\leq l\leq m_1+m_2).
\end{cases}
\end{align}
For matrices $M_j\in \mathcal{M}(k, m_j;X)$ $(1\leq j\leq r)$ with the same number of rows,
we define $cbind(M_1,\ldots,M_r)\in \mathcal{M}(k, m_1+\cdots+m_r;X)$ inductively by 
\begin{align}
cbind(M_1,\ldots,M_r)=cbind\left(cbind(M_1,\ldots,M_{r-1}),M_r\right).
\end{align}

Let $A$ be a group and let $M\in \mathcal{M}(k, m;A)$. 
For any subset $B\subset A$, which is typically a subgroup or a generating set, we naturally regard $\mathcal{M}(k,m;B)\subset \mathcal{M}(k,m;A)$. 
For two matrices $M_1, M_2\in \mathcal{M}(k, m;A)$ of the same size, 
$M_1\odot M_2\in \mathcal{M}(k, m;A)$ denotes the Hadamard product of $M_1$ and $M_2$.
That is,   
\begin{equation}\label{odot_map}
(M_1\odot M_2)[i,l]=M_1[i,l]\cdot M_2[i,l]\in A.
\end{equation}
We define a map $prod: \mathcal{M}(k, m;A)\to A$ inductively as follows.
When $m=1$, let
\begin{equation}\label{prod1_map}
prod(M)=M[1,1]\cdot M[2,1]\cdot \cdots \cdot M[k,1]\in A.
\end{equation}
If $prod$ is defined on $\mathcal{M}(k, m-1;A)$, we define $prod: \mathcal{M}(k, m;A)\to A$ by
\begin{equation}\label{prod2_map}
prod(M)=prod(M[\ ,1:m-1])\cdot prod(M[\ ,m])\in A.
\end{equation}

\section{Reduced clique-cube complexes and actions on them}\label{clique}

In this section, we introduce and study reduced clique-cube complexes. 
Let $A_\Gamma$ be an Artin group associated with a defining graph $\Gamma$ with the vertex set $V(\Gamma)=V$ and the edge set $E(\Gamma)=E$,
as in Section~\ref{defining_graph}. 
Let $\Gamma_0$ be the maximal clique factor of $\Gamma$.
We consider the join decomposition $\Gamma=\Gamma_0\ast \Gamma_{\ast}$.
We set $V_0=V(\Gamma_0)$.

\begin{definition}\label{reduced_def}
Let
$$\Delta_{\Gamma,\Gamma_0}=\{U\subset V\mid U\in \Delta_{\Gamma},\ U\supset V_0\}.$$
Let $C_{\Gamma,\Gamma_0}$ be the subcomplex of $C_\Gamma$ 
spanned by vertices $gA_U$ with $g\in A_\Gamma$ and 
$U\in \Delta_{\Gamma,\Gamma_0}$.
We call $C_{\Gamma,\Gamma_0}$ the \textit{reduced clique-cube complex}.
\end{definition}
\noindent See Figure~\ref{reduced-complex_fig}.

When $\Gamma$ is not a cone; that is, $\Gamma=\Gamma_\ast$, we have $C_{\Gamma,\Gamma_0}=C_{\Gamma}$.
When $\Gamma$ is a clique; that is, $\Gamma=\Gamma_0$, we have $C_{\Gamma,\Gamma_0}=C_{\Gamma,\Gamma}=\{A_{\Gamma}\}$. 
In this paper, we are mainly interested in the case where $\Gamma$ is not a clique.

We note that the action of $A_{\Gamma}$ on $C_{\Gamma}$ preserves $C_{\Gamma,\Gamma_0}$.
We consider the induced action of $A_{\Gamma}$ on $C_{\Gamma,\Gamma_0}$.
Then $\bigcup_{U\in \Delta_{\Gamma,\Gamma_0}}[A_{V_0}, A_U]$ is a fundamental domain of $C_{\Gamma,\Gamma_0}$
with respect to the action of $A_{\Gamma}$.

\begin{lemma}\label{connected_lem}
$C_{\Gamma,\Gamma_0}$ is a connected subcomplex of $C_{\Gamma}$.  
\end{lemma}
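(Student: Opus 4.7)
The plan is to show that every vertex of $C_{\Gamma,\Gamma_0}$ can be connected by an edge-path inside $C_{\Gamma,\Gamma_0}$ to the distinguished vertex $A_{V(\Gamma_0)}$. Note first that $V(\Gamma_0)$ itself spans a clique in $\Gamma$ and contains $V(\Gamma_0)$, so $V(\Gamma_0)\in\Delta_{\Gamma,\Gamma_0}$ and $A_{V(\Gamma_0)}$ is indeed a vertex of $C_{\Gamma,\Gamma_0}$. I would split the argument into two reductions.

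First, I would show that for any vertex $gA_U$ of $C_{\Gamma,\Gamma_0}$, there is an edge-path in $C_{\Gamma,\Gamma_0}$ from $gA_U$ to $gA_{V(\Gamma_0)}$. Since $V(\Gamma_0)\subset U$ and $U$ spans a clique in $\Gamma$, I can interpolate by a chain $V(\Gamma_0)=U_0\subsetneq U_1\subsetneq\cdots\subsetneq U_m=U$ obtained by adding one generator at a time. Each $U_i$ is a subset of the clique $U$, hence spans a clique, and each $U_i$ contains $V(\Gamma_0)$, so $U_i\in\Delta_{\Gamma,\Gamma_0}$. By Definition~\ref{clique cube}, consecutive cosets $gA_{U_i}$ and $gA_{U_{i+1}}$ are joined by an edge, and this edge lies in $C_{\Gamma,\Gamma_0}$. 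Thus it suffices to connect vertices of the form $gA_{V(\Gamma_0)}$ to $A_{V(\Gamma_0)}$.

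Second, I would induct on the length of a word in $V\cup V^{-1}$ representing $g$ to show that $gA_{V(\Gamma_0)}$ is connected to $A_{V(\Gamma_0)}$ in $C_{\Gamma,\Gamma_0}$. The base case $g=1$ is trivial. For the inductive step, since the left action of $A_\Gamma$ on $C_\Gamma$ preserves $C_{\Gamma,\Gamma_0}$ and acts by cellular isometries, it suffices to show that for every generator $v\in V$ (and its inverse), the vertex $v^{\pm1}A_{V(\Gamma_0)}$ is connected to $A_{V(\Gamma_0)}$ inside $C_{\Gamma,\Gamma_0}$. If $v\in V(\Gamma_0)$, then $v^{\pm1}\in A_{V(\Gamma_0)}$ and the two cosets coincide. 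Otherwise $v\in V(\Gamma_\ast)$, and since $\Gamma=\Gamma_0\ast\Gamma_\ast$ is a join, $v$ is adjacent in $\Gamma$ to every vertex of $V(\Gamma_0)$; hence $V(\Gamma_0)\cup\{v\}$ spans a clique in $\Gamma$ and lies in $\Delta_{\Gamma,\Gamma_0}$. Then $v^{\pm1}\in A_{V(\Gamma_0)\cup\{v\}}$, so the two edges
\[
A_{V(\Gamma_0)}\;-\;A_{V(\Gamma_0)\cup\{v\}}\;=\;v^{\pm1}A_{V(\Gamma_0)\cup\{v\}}\;-\;v^{\pm1}A_{V(\Gamma_0)}
\]
both lie in $C_{\Gamma,\Gamma_0}$ and complete the argument.

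I do not anticipate a serious obstacle. The only subtle point is ensuring that the bridging vertex $A_{V(\Gamma_0)\cup\{v\}}$ really lies in $C_{\Gamma,\Gamma_0}$; this is exactly where the join hypothesis $\Gamma=\Gamma_0\ast\Gamma_\ast$ is used, and it explains why the restriction to subsets containing $V(\Gamma_0)$, rather than to arbitrary cliques, is the right one for preserving connectedness.
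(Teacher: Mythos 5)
Your proof is correct and takes essentially the same approach as the paper's: an induction on word length to connect $gA_{V(\Gamma_0)}$ to $A_{V(\Gamma_0)}$ using the bridging vertex $A_{V(\Gamma_0)\cup\{v\}}$, combined with a chain of one-generator additions to connect $gA_U$ to $gA_{V(\Gamma_0)}$. The only difference is the order in which you present the two reductions, and you make the use of the join hypothesis slightly more explicit.
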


\begin{proof}
If $\Gamma$ is a clique, then $C_{\Gamma,\Gamma_0}=\{A_{\Gamma}\}$ is clearly connected. 
Suppose that $\Gamma$ is not a clique.
Let $U\in \Delta_{\Gamma,\Gamma_0}$.
Let $g\in A_{\Gamma}$. 
First, we show that there exists a path in $C_{\Gamma,\Gamma_0}$ from $A_{V_0}$ to $gA_{V_0}$.
Suppose that $g\notin A_{V_0}$; otherwise, we would have $A_{V_0}=gA_{V_0}$.
Let $g=v_1^{\varepsilon_1}\cdots v_n^{\varepsilon_n}$, where $n\in \mathbb{N}$, $v_i\in V$, $\varepsilon_i\in \{1,-1\}$ for every $1\leq i\leq n$.
By induction on $n$, we suppose that there exists a path in $C_{\Gamma,\Gamma_0}$ from $A_{V_0}$ to $v_1^{\varepsilon_1}\cdots v_{n-1}^{\varepsilon_{n-1}}A_{V_0}$.
If $v_n\in V_0$, then 
$$v_1^{\varepsilon_1}\cdots v_{n-1}^{\varepsilon_{n-1}}A_{V_0}=v_1^{\varepsilon_1}\cdots v_{n}^{\varepsilon_{n}}A_{V_0},$$
and the induction proceeds.
If $v_n\notin V_0$, since $V_0\sqcup\{v_n\}\in \Delta_{\Gamma,\Gamma_0}$,
there exists a path
$$\left(v_1^{\varepsilon_1}\cdots v_{n-1}^{\varepsilon_{n-1}}A_{V_0},\ v_1^{\varepsilon_1}\cdots v_{n-1}^{\varepsilon_{n-1}}A_{V_0\sqcup\{v_n\}}=gA_{V_0\sqcup\{v_n\}},\ gA_{V_0}\right)$$
in $C_{\Gamma,\Gamma_0}$
from $v_1^{\varepsilon_1}\cdots v_{n-1}^{\varepsilon_{n-1}}A_{V_0}$ to $gA_{V_0}$.
By concatenating these two paths,
we obtain a path from $A_{V_0}$ to $gA_{V_0}$ in $C_{\Gamma,\Gamma_0}$.
Next, we show the existence of a path in $C_{\Gamma,\Gamma_0}$ from $gA_{V_0}$ to $gA_{U}$.
Let $U=V_0\sqcup \{u_1,\ldots, u_m\}$ where $u_i\neq u_j$ for $1\leq i\neq j\leq m$.
Then there exists a path
$$\left(gA_{V_0}, gA_{V_0\sqcup\{u_1\}}, gA_{V_0\sqcup\{u_1, u_2\}},\ldots, gA_{U}\right)$$
in $C_{\Gamma,\Gamma_0}$ from $gA_{V_0}$ to $gA_{U}$.
Therefore, it follows that $C_{\Gamma,\Gamma_0}$ is a connected subcomplex of $C_{\Gamma}$.
\end{proof}

We endow $C_{\Gamma,\Gamma_0}$ with a metric in the standard way for a cube complex.
That is, we identify all cubes in $C_{\Gamma,\Gamma_0}$ with Euclidean unit cubes and give $C_{\Gamma,\Gamma_0}$ the path metric.
Note that $C_{\Gamma,\Gamma_0}$ is connected.

\begin{definition}\label{convex_def}
Let $X$ be a geodesic space and $W$ be a subset of $X$.
$W$ is {\it convex} in $X$ if for every $w_1,w_2\in W$, every geodesic from $w_1$ to $w_2$ is contained in $W$.
\end{definition}

\begin{definition}\label{full_def}
Let $L$ be a simplicial complex and $L'$ be a subcomplex of $L$.
$L'$ is {\it full} if any simplex of $L$ whose vertices are in $L'$ is contained in $L'$.
\end{definition}

\begin{theorem}[see, e.g., \cite{SS}]\label{convex-subcomplex_thm}
Let $X$ be a finite dimensional CAT(0) cube complex and $W$ be a connected subcomplex.
Then $W$ is convex in $X$ if and only if for every vertex $v$ of $W$, the link $L'$ of $v$ in $W$ is a full subcomplex of the link $L$ of $v$ in $X$.
\end{theorem}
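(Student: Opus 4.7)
The plan is to prove the two directions separately, relating cubes in $X$ to simplices in links.

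For the forward direction, suppose $W$ is convex in $X$. Let $v$ be a vertex of $W$ and let $\sigma$ be a simplex of $L$ whose vertices all lie in $L'$. Then $\sigma$ corresponds to a cube $C$ of $X$ containing $v$, with the vertices of $\sigma$ labeling the edges of $C$ incident to $v$. By hypothesis these edges lie in $W$, so $v$ and its $C$-neighbors $v_1, \ldots, v_n$ are all vertices of $W$. Since $C$ is an isometrically embedded Euclidean unit cube inside the CAT(0) space $X$, the convex hull in $X$ of $\{v, v_1, \ldots, v_n\}$ coincides with the Euclidean $n$-simplex on these vertices inside $C$; its centroid is an interior point of $C$. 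Convexity of $W$ places this interior point in $W$, and because $W$ is a subcomplex, any interior point of a cube lying in $W$ forces the whole cube into $W$. Hence $C \subset W$ and $\sigma \in L'$.

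For the backward direction, suppose the full link condition holds at every vertex of $W$. I first establish local convexity at each vertex. A small enough open ball around $v$ in $X$ is isometric to a Euclidean open cone on the all-right spherical flag complex $L$ (which is CAT(1) because $X$ is nonpositively curved), and in this ball $W$ appears as the cone on $L'$. A full subcomplex of a flag complex is itself a flag complex, and fullness moreover implies that $L'$ is $\pi$-convex in $L$ in the all-right metric, so the cone on $L'$ is convex in the cone on $L$; hence $W$ is locally convex at $v$. The same argument at non-vertex points, using a product decomposition of a neighborhood into a Euclidean factor and a cone on a smaller link, yields local convexity throughout. To globalize, take $w_1, w_2 \in W$, let $\gamma:[0,1] \to X$ be the unique CAT(0) geodesic between them, and set $t_\ast := \sup\{t \in [0,1] : \gamma([0,t]) \subset W\}$. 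Since $W$ is closed in $X$ and $\gamma$ is continuous, $\gamma(t_\ast) \in W$; and if $t_\ast < 1$, local convexity at $\gamma(t_\ast)$ combined with uniqueness of CAT(0) geodesics extends containment past $t_\ast$, contradicting maximality. Hence $\gamma \subset W$.

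The main obstacle is the backward direction, specifically the step showing that a full subcomplex $L'$ of the all-right spherical flag complex $L$ is $\pi$-convex in $L$. This is the heart of local convexity and relies on $L$ being CAT(1) together with the combinatorial flag property. Once it is established, the forward direction is essentially immediate (CAT(0) geodesics inside a cube are Euclidean segments and subcomplexes are closed under containing full cubes) and globalization follows by a standard continuation argument using closedness of $W$ and uniqueness of CAT(0) geodesics.
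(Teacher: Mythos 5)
The paper gives no proof of this theorem; it cites the Sakai--Sakuma preprint [SS] and uses the result as a black box in Lemma~\ref{convex_lem}, so there is no internal proof to compare yours against.

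Your forward direction is correct: if the edges at $v$ labeling $\sigma$ lie in $W$, then by convexity a point in the interior of the carrier cube $C$ lies in $W$ (your convex-hull/centroid argument, or more simply an iterated midpoint argument), and since $W$ is a subcomplex this forces $C\subset W$, hence $\sigma\in L'$. Your backward direction has the right outline, and you correctly single out the $\pi$-convexity of full subcomplexes of all-right spherical flag complexes as the main geometric input for local convexity. However, the globalization step as written has a genuine gap. Local convexity at $p=\gamma(t_\ast)$ gives information about geodesics between points of $W$ inside a small ball around $p$; it does not by itself supply any point of $W$ on $\gamma$ strictly past $t_\ast$, so ``uniqueness of CAT(0) geodesics'' has nothing to latch onto and the containment cannot simply be extended. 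The standard way to pass from local to global convexity here is either (a) a Cartan--Hadamard-type argument: show the inclusion $(W,d_W)\hookrightarrow X$ is a local isometry, deduce $W$ is nonpositively curved, show $W$ is simply connected (e.g.\ because a nontrivial deck transformation of its universal cover would contradict uniqueness of geodesics in $X$), conclude $W$ is CAT(0), and then invoke the fact that a local isometry from a complete CAT(0) space into a CAT(0) space is an isometric embedding onto a convex subset; or (b) a length-minimization argument: join $w_1$ to $w_2$ by a local geodesic in $W$, observe that local convexity makes it a local geodesic in $X$, hence a global geodesic, hence equal to $\gamma$ by uniqueness --- but this requires justifying that a local geodesic exists (which needs care since the relevant cube complexes here are not proper). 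Either route replaces your continuation argument; as it stands that step does not close.
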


\begin{lemma}\label{convex_lem}
$C_{\Gamma,\Gamma_0}$ is convex in $C_{\Gamma}$.
In particular, $C_{\Gamma,\Gamma_0}$ is a finite dimensional CAT(0) cube complex and a complete CAT(0) space.
\end{lemma}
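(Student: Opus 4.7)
The plan is to apply Theorem~\ref{convex-subcomplex_thm}. By Lemma~\ref{connected_lem}, $C_{\Gamma,\Gamma_0}$ is a connected subcomplex of the finite-dimensional CAT(0) cube complex $C_\Gamma$, so it suffices to verify, at each vertex of $C_{\Gamma,\Gamma_0}$, that its link in $C_{\Gamma,\Gamma_0}$ is a full subcomplex of its link in $C_\Gamma$.

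Fix a vertex $v=gA_U$ with $U\in\Delta_{\Gamma,\Gamma_0}$, i.e.\ $V(\Gamma_0)\subset U$. Using Definition~\ref{clique cube}, the cubes of $C_\Gamma$ containing $v$ are precisely the intervals $[gA_{U_1},gA_{U_2}]$ with $U_1\subset U\subset U_2$ and $U_2\in\Delta_\Gamma$ (the condition $U_1\in\Delta_\Gamma$ is automatic since $U$ itself spans a clique). The edges of such a cube meeting $v$ split into ``down'' edges, indexed by $U\setminus U_1\subset U$, and ``up'' edges, indexed by $U_2\setminus U\subset V\setminus U$, where each element of $U_2\setminus U$ must be adjacent in $\Gamma$ to every vertex in $U$. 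Therefore the link $L$ of $v$ in $C_\Gamma$ has vertex set
\[
U\ \sqcup\ \{w\in V\setminus U \mid U\cup\{w\}\text{ spans a clique in }\Gamma\},
\]
and a finite collection of such vertices spans a simplex in $L$ if and only if the ``up'' directions among them form a clique in $\Gamma$ (the ``down'' directions impose no condition because $U$ is itself a clique).

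I next compute the link $L'$ of $v$ in $C_{\Gamma,\Gamma_0}$. A cube $[gA_{U_1},gA_{U_2}]$ containing $v$ lies in $C_{\Gamma,\Gamma_0}$ iff $U_1,U_2\in\Delta_{\Gamma,\Gamma_0}$; since $U_2\supset U\supset V(\Gamma_0)$ this reduces to $U_1\supset V(\Gamma_0)$, i.e.\ the ``down'' directions must lie in $U\setminus V(\Gamma_0)$. The ``up'' direction condition is unchanged. Hence $L'$ has vertex set $(U\setminus V(\Gamma_0))\sqcup\{w\in V\setminus U\mid U\cup\{w\}\text{ spans a clique}\}$, and a finite collection spans a simplex in $L'$ under exactly the same clique condition on the up-directions. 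This shows that $L'$ is the full subcomplex of $L$ spanned by its vertex set, so by Theorem~\ref{convex-subcomplex_thm}, $C_{\Gamma,\Gamma_0}$ is convex in $C_\Gamma$.

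For the final assertion, $C_{\Gamma,\Gamma_0}$ inherits the CAT(0) property from $C_\Gamma$ because convex subspaces of CAT(0) spaces are CAT(0). Completeness then follows either because $C_{\Gamma,\Gamma_0}$ is a closed subset (being a subcomplex) of a complete metric space, or directly because $C_{\Gamma,\Gamma_0}$ is itself a finite-dimensional cube complex and \cite[Theorem 7.19]{BH} applies. I anticipate no essential obstacle; the only point requiring care is the bookkeeping for the links, and in particular the observation that the sole difference between $L$ and $L'$ is the removal of the ``down'' directions indexed by $V(\Gamma_0)$, while the simplex-forming condition depends only on the ``up'' directions, which is exactly what yields fullness.
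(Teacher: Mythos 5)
Your proof is correct and follows essentially the same strategy as the paper: apply Theorem~\ref{convex-subcomplex_thm} by checking fullness of the vertex links, using the observation that passing from $C_\Gamma$ to $C_{\Gamma,\Gamma_0}$ only removes the ``down'' link-vertices indexed by $V(\Gamma_0)$ while the simplex condition is unchanged. The paper phrases this slightly more tersely (taking an arbitrary simplex $\Delta$ of the link $L$ with vertices in $L'$ and showing the corresponding cube $C_\Delta = [gA_{\cap U_i}, gA_{\cup U_i}]$ lies in $C_{\Gamma,\Gamma_0}$ because $\bigcap_i U_i \supset V(\Gamma_0)$), but this is the same computation you carry out explicitly.
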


\begin{proof}
We show that $C_{\Gamma,\Gamma_0}$ is convex in $C_{\Gamma}$.
Let $gA_U$ be a vertex of $C_{\Gamma,\Gamma_0}$.
Let $L$ be the link of $gA_U$ in $C_{\Gamma}$ and let $L'$ be the link of $gA_U$ in $C_{\Gamma,\Gamma_0}$.
According to Theorem~\ref{convex-subcomplex_thm} and Lemma~\ref{connected_lem}, it is sufficient to show that $L'$ is a full subcomplex of $L$.
Let $\Delta$ be a $d$-dimensional simplex of $L$ whose vertices are in $L'$.
Let $C_{\Delta}$ be the $(d+1)$-dimensional cube of $C_{\Gamma}$ corresponding to $\Delta$.
Let $gA_{U_1},\ldots, gA_{U_{d+1}}$ be vertices of $C_{\Delta}$ that are connected to $gA_{U}$ by a $1$-dimensional cube of $C_{\Gamma}$.
Note that $C_{\Delta}=[gA_{\cap_{i=1}^{d+1} U_i}, gA_{\cup_{i=1}^{d+1} U_i}]$.
Since $gA_{U_i}\in C_{\Gamma,\Gamma_0}$, we have $U_i\supset V_0$ for every $i$.
Therefore, $\cap_{i=1}^{d+1} U_i\supset V_0$.
Since $C_{\Delta}$ is contained in $C_{\Gamma,\Gamma_0}$, $\Delta$ is contained in $L'$.
It follows that $L'$ is a full subcomplex of $L$.

The other assertions follow from the convexity of $C_{\Gamma,\Gamma_0}$ in $C_{\Gamma}$ and Theorem~\ref{C_Gamma_thm}.
\end{proof}

The following is a generalization of \cite[Lemma 3.2]{Charney}. 
\begin{lemma}\label{CM Lemma 3.2}
The action of $A_{\Gamma}$ on $C_{\Gamma,\Gamma_0}$ is a minimal action. 
That is, for every $x\in C_{\Gamma,\Gamma_0}$,
the convex hull of the orbit of $x$, $\mathrm{Hull}(orb(x))$, is $C_{\Gamma,\Gamma_0}$.
\end{lemma}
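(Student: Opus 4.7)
The plan is to generalize the strategy of \cite[Lemma 3.2]{Charney} to the reduced complex. Let $Y := \mathrm{Hull}(orb(x))$; by construction $Y$ is a nonempty, closed, convex, $A_\Gamma$-invariant subset of $C_{\Gamma,\Gamma_0}$ (which is CAT(0) by Lemma~\ref{convex_lem}). I will show $Y = C_{\Gamma,\Gamma_0}$ in two stages: first, I will establish that $Y$ contains the canonical vertex $A_{V(\Gamma_0)}$ after a harmless translation; second, I will spread this to all vertices by convexity.

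\textbf{Stage 1.} Since the claim is quantified over all $x$, I may replace $x$ by $g^{-1}x$ for any $g\in A_\Gamma$ and reduce to showing $A_{V(\Gamma_0)}\in Y$. Assuming first that $x$ is a vertex $gA_U$, translation gives $A_U\in Y$, and I will iteratively strip generators from $U$ down to $V(\Gamma_0)$. Given $A_U\in Y$ with $U\supsetneq V(\Gamma_0)$, I pick $v\in U\setminus V(\Gamma_0)$, say $v\in V(\Gamma_i)$ for some $i\ge 1$. Indecomposability of $\Gamma_i$ (connectedness of $\Gamma_i^c$) supplies a non-neighbor $w\in V(\Gamma_i)$ of $v$, and such $w$ automatically lies outside $U$ since cliques in $\Gamma_i$ cannot contain non-adjacent pairs. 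By $A_\Gamma$-invariance, $wA_U\in Y$. Setting $U' := \{u\in U : u \text{ is joined to } w \text{ in } \Gamma\}$, the join structure gives $V(\Gamma_0)\subset U'$, and $v\notin U'$, so $V(\Gamma_0)\subset U'\subsetneq U$. I claim the CAT(0) geodesic from $A_U$ to $wA_U$ is the concatenation of the diagonal of the cube $[A_{U'},A_U]$ (ending at $A_{U'}$), the edges $A_{U'}\to A_{U'\cup\{w\}}\to wA_{U'}$, and the diagonal of $[wA_{U'},wA_U]$ (ending at $wA_U$). Straightness at each pivot reduces to a link calculation: at $A_{U'}$ and $wA_{U'}$, the $w$-direction and the simplex spanned by $U\setminus U'$ lie in distinct connected components of the link (since $w$ is not joined to any element of $U\setminus U'$ in $\Gamma$), so the angle is $\pi$; at $A_{U'\cup\{w\}}$, the two $w$-edges are dual to distinct hyperplanes and hence lie in distinct link components, so again the angle is $\pi$. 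By convexity $A_{U'}\in Y$, and iterating drives $U$ down to $V(\Gamma_0)$. When $x$ is not a vertex, a short preliminary argument using orbit translates of $x$ that exit the cube containing $x$ will produce a vertex of that cube in $Y$.

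\textbf{Stage 2.} By invariance, $\{hA_{V(\Gamma_0)}:h\in A_\Gamma\}\subset Y$. I will prove by induction on $k := |U\setminus V(\Gamma_0)|$ that every vertex $gA_U$ with $U\in\Delta_{\Gamma,\Gamma_0}$ lies in $Y$. The base case $k=0$ is immediate. For $k\ge 1$, I pick $v\in U\setminus V(\Gamma_0)$; the inductive hypothesis gives $gA_{U\setminus\{v\}}\in Y$, and invariance gives $gvA_{U\setminus\{v\}}\in Y$. The combinatorial path
\[
gA_{U\setminus\{v\}}\longleftrightarrow gA_U\longleftrightarrow gvA_{U\setminus\{v\}}
\]
has both edges of $v$-type, dual to distinct $v$-type hyperplanes (at any vertex, each hyperplane has at most one dual edge). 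By Remark~\ref{label} these two hyperplanes do not cross each other, so the two endpoints lie in no common cube, and the displayed length-$2$ path realizes the CAT(0) distance between them---it is the CAT(0) geodesic. Therefore $gA_U\in Y$ by convexity.

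\textbf{The hard part.} The crux will be the CAT(0) geodesic verification in Stage 1, which reduces to a link computation at the three pivot vertices showing that the bend angle is exactly $\pi$. A secondary technical difficulty will be the case where $x$ is not a vertex of $C_{\Gamma,\Gamma_0}$, which requires a brief separate argument to seed Stage 1 with a vertex in $Y$.
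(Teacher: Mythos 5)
Your argument reproduces the paper's proof: your Stage~1 is the paper's ``$\supset$'' direction (pick a non-neighbor $s$ of some $u\in U\setminus V(\Gamma_0)$ inside the same indecomposable factor, form $U'$, and observe the concatenated path through $A_{U'}$, $A_{U'\cup\{s\}}$, $sA_{U'}$ is a local geodesic in the CAT(0) complex), your Stage~2 is the paper's ``$\subset$'' direction, and your link-angle verifications simply spell out what the paper compresses into ``local geodesic, hence geodesic.'' The one piece you defer---the case where $x$ is not a vertex---is handled in the paper by induction on $|T\setminus U|$: picking $s\in T\setminus U$, the geodesic from $x$ to $sx$ must cross the common face $[A_{U\cup\{s\}},A_T]$ of $C$ and $sC$, yielding a point of $\mathrm{Hull}(orb(x))$ in a strictly smaller cube; this is cleaner than trying to reach a vertex in one step, so you should fill your sketch in that way.
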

\begin{proof}
When $\Gamma$ is a clique, $C_{\Gamma,\Gamma_0}=\{A_{V_0}\}$. In this case, the lemma is trivial.
We now treat the case where $\Gamma$ is not a clique.
We fix a join decomposition 
$$\Gamma=\Gamma_0\ast \Gamma_1\ast\cdots \ast\Gamma_k,$$
where 
$\Gamma_0$ is the maximal clique factor of $\Gamma$ 
and $\Gamma_i$ are indecomposable factors with at least two vertices for $i\geq 1$.
We note that $\Gamma_1\ast\cdots \ast\Gamma_k\neq \emptyset$, since $\Gamma$ is not a clique.

Let $x\in C_{\Gamma,\Gamma_0}$. Let $C$ be a cube of $C_{\Gamma,\Gamma_0}$ with $x\in C$.
Without loss of generality, we set $C=[A_U, A_T]$ where $U\subset T$, $U,T\in \Delta_{\Gamma,\Gamma_0}$.
By induction on $\#(T\setminus U)$, we show that $\mathrm{Hull}(orb(x))=C_{\Gamma,\Gamma_0}$.

When $\#(T\setminus U)=0$, then $x=A_U$.
It is sufficient to show that $\mathrm{Hull}(orb(A_U))=\mathrm{Hull}(orb(A_{V_0}))$.
Indeed, if $\mathrm{Hull}(orb(A_U))=\mathrm{Hull}(orb(A_{V_0}))$ for every $U\in \Delta_{\Gamma,\Gamma_0}$, then $\mathrm{Hull}(orb(A_{V_0}))$ contains every vertex of $C_{\Gamma,\Gamma_0}$ and thus equals $C_{\Gamma,\Gamma_0}$.

First, we show that $\mathrm{Hull}(orb(A_U))\supset \mathrm{Hull}(orb(A_{V_0}))$.
If $V_0\neq U$, we take $u\in U\setminus V_0$.
There exists $i\in \{1,\ldots, k\}$ such that $u\in V(\Gamma_i)$.
Since $\Gamma_i$ is indecomposable, 
we have a vertex $s\neq u$ of $\Gamma_i$ such that $s$ and $u$ are not connected by an edge in $\Gamma_i$.
Note that $s\notin U$.
Let 
$$U'=\{v\in U\mid \text{ $v$ and $s$ are connected by an edge in $\Gamma$}\}.$$
We note that ${U'}$ does not contain $u$.
Hence we have $V_0\subset {U'}\subsetneq U$.
We also note that ${U'}\sqcup\{s\}$ spans a clique but $U\sqcup\{s\}$ does not.
Then, the sequence of vertices
$$A_U,\ A_{U'},\ A_{{U'}\sqcup\{s\}},\ sA_{U'},\ sA_U$$
forms a local geodesic, which is a geodesic, since $C_{\Gamma,\Gamma_0}$ is CAT(0) (Lemma~\ref{convex_lem}).
This geodesic is contained in $\mathrm{Hull}(orb(A_U))$, since the two endpoints are in $\mathrm{Hull}(orb(A_U))$.
Therefore, $A_{U'}\in \mathrm{Hull}(orb(A_U))$, and thus
$\mathrm{Hull}(orb(A_{U'}))\subset \mathrm{Hull}(orb(A_U))$.
If $U'=V_0$, then we have nothing further to show.
If ${U'}\supsetneq V_0$, we repeat the same argument until we obtain $\mathrm{Hull}(orb(A_U))\supset \mathrm{Hull}(orb(A_{V_0}))$.

Next, we show that $\mathrm{Hull}(orb(A_U))\subset \mathrm{Hull}(orb(A_{V_0}))$.
If $V_0\neq U$, we take $u\in U\setminus V_0$.
Then 
$$A_{U\setminus\{u\}},\ A_U,\ uA_{U\setminus \{u\}}$$
is a geodesic.
It follows that $\mathrm{Hull}(orb(A_U))\subset \mathrm{Hull}(orb(A_{U\setminus \{u\}}))$.
If $U\setminus\{u\}=V_0$, then the proof is finished. 
If $U\setminus\{u\}\supsetneq V_0$, we repeat the same argument until we obtain $\mathrm{Hull}(orb(A_U))\subset \mathrm{Hull}(orb(A_{V_0}))$.

When $\#(T\setminus U)>0$, there exists $s\in T\setminus U$.
Then $C\cap sC$ contains a face spanned by $A_{U\sqcup \{s\}}$ and $A_T$.
The geodesic from $x$ to $sx$ passes through this face. Let $y$ be the intersection point.
Then $y\in \mathrm{Hull}(orb(x))$, and thus $\mathrm{Hull}(orb(y))\subset \mathrm{Hull}(orb(x))$.
By induction, $\mathrm{Hull}(orb(y))=C_{\Gamma,\Gamma_0}$.
\end{proof}

The following is a generalization of \cite[Proposition 4.8]{KO2}. 
\begin{proposition}\label{finite normal subgroup}
Suppose that $\Gamma$ is not a clique. 
Then, any finite normal subgroup of $A_\Gamma$ is contained in $A_{V_0}$. 
In particular, any finite subgroup of the center of $A_\Gamma$ is contained in $A_{V_0}$.
Also, if $A_\Gamma$ is decomposed as a direct product $A_{\Gamma}=A_1\times A_2$ and 
$A_1$ is finite, then $A_1$ is contained in $A_{V_0}$ and we have $A_{V_0}=A_1\times (A_{V_0}\cap A_2)$. 
\end{proposition}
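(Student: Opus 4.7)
The plan is to leverage the isometric action of $A_\Gamma$ on the reduced clique-cube complex $C_{\Gamma,\Gamma_0}$, which is a complete CAT(0) space by Lemma~\ref{convex_lem}, together with the minimality of this action established in Lemma~\ref{CM Lemma 3.2}. First, let $N$ be a finite normal subgroup of $A_\Gamma$. By the standard Bruhat--Tits circumcenter construction (the circumcenter of a bounded orbit of isometries of a complete CAT(0) space is fixed by every isometry preserving the orbit), the finite group $N$ has a fixed point: choose $x \in C_{\Gamma,\Gamma_0}$ with $n \cdot x = x$ for all $n \in N$.

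Next, I would promote this single fixed point to a global fixed set. By normality, for every $g \in A_\Gamma$ and every $n \in N$ we have $g^{-1} n g \in N$, hence $g^{-1} n g$ fixes $x$, which rewrites as $n \cdot (g x) = g x$. Thus $N$ fixes the entire orbit $A_\Gamma \cdot x$ pointwise. Since the fixed-point set of any family of isometries of a CAT(0) space is convex, $N$ also fixes the convex hull $\mathrm{Hull}(A_\Gamma \cdot x)$ pointwise, and by Lemma~\ref{CM Lemma 3.2} this hull is all of $C_{\Gamma,\Gamma_0}$. In particular, every $n \in N$ fixes the vertex $A_{V(\Gamma_0)}$; since the action is by left multiplication, $n A_{V(\Gamma_0)} = A_{V(\Gamma_0)}$ forces $n \in A_{V(\Gamma_0)} = A_{\Gamma_0}$. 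This proves $N \subset A_{\Gamma_0}$. The second assertion follows by applying this to $N = Z(A_\Gamma)$ whenever the center is finite.

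For the direct product statement, suppose $A_\Gamma \cong A_1 \times A_2$ with $A_1$ finite. Then $A_1$ is a finite normal subgroup of $A_\Gamma$, so by the part just proved, $A_1 \subset A_{\Gamma_0}$. For any $a \in A_{\Gamma_0}$, write uniquely $a = a_1 a_2$ with $a_i \in A_i$; since $a_1 \in A_1 \subset A_{\Gamma_0}$, we have $a_2 = a_1^{-1} a \in A_{\Gamma_0} \cap A_2$. Combined with $A_1 \cap A_2 = \{1\}$ (whence $A_1 \cap (A_{\Gamma_0}\cap A_2) = \{1\}$) and the fact that $A_1$ commutes with $A_2$ inside $A_\Gamma$, this yields the internal direct product $A_{\Gamma_0} = A_1 \times (A_{\Gamma_0} \cap A_2)$.

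The only even mildly delicate step is the second paragraph, where one must notice that fixing an orbit upgrades to fixing its convex hull via the convexity of the fixed-point set of a family of CAT(0) isometries; this is precisely the mechanism that converts the local Bruhat--Tits fixed point into a global pointwise fix of $C_{\Gamma,\Gamma_0}$. Once that observation is in place, the remaining conclusions follow mechanically from the identification of the stabilizer of the base vertex $A_{V(\Gamma_0)}$ with $A_{\Gamma_0}$.
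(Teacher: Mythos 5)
Your proof is correct and follows essentially the same route as the paper: Bruhat--Tits fixed point for the finite group $N$, normality upgrading a single fixed point to a fixed orbit, convexity of the fixed-point set combined with minimality of the action (Lemma~\ref{CM Lemma 3.2}) to conclude $N$ fixes all of $C_{\Gamma,\Gamma_0}$, and hence $N \subset \mathrm{stab}(A_{V(\Gamma_0)}) = A_{\Gamma_0}$. You also spell out the internal direct-product decomposition $A_{\Gamma_0} = A_1 \times (A_{\Gamma_0}\cap A_2)$, which the paper's proof leaves implicit; your elementary argument for it is correct.
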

\begin{proof}
Let $N$ be a finite normal subgroup of $A_\Gamma$. 
Set $$Fix(N):=\{x\in C_{\Gamma,\Gamma_0} \mid nx=x\text{ for any }n\in N \}.$$ 
Because $N$ is finite and $C_{\Gamma,\Gamma_0}$ is a complete CAT(0) space, we have $Fix(N)\neq \emptyset$.
Take any $x\in Fix(N)$. 
Then, $orb(x)=A_\Gamma x\subset Fix(N)$. 
Indeed, the normality of $N$ implies that, for any $g\in A_\Gamma$ and $n\in N$, there exists $n'\in N$ 
such that $ng=gn'$. Therefore, we have $ngx=gn'x=gx$.
Because $C_{\Gamma,\Gamma_0}$ is CAT(0), $Fix(N)$ is convex. Hence, we have $Hull(orb(x))\subset Fix(N)$. 
By Lemma \ref{CM Lemma 3.2}, we have $Hull(orb(x))=C_{\Gamma,\Gamma_0}$. 
Hence, $Fix(N)=C_{\Gamma,\Gamma_0}$. In particular, $A_{V_0} \in Fix(N)$. 
Hence, $N$ must be contained in $A_{V_0}$.
\end{proof}

Finally, we state a lemma related to the local geometry of reduced clique-cube complexes. 
The following is a nontrivial variant in the setting of $C_{\Gamma,\Gamma_0}$ 
of \cite[Lemma 5.2]{KO2} concerning $C_{\Gamma}$. 
\begin{lemma}\label{twistnew}
Let $s,t\in V(\Gamma_\ast)$ belong to mutually different indecomposable factors of $\Gamma_\ast$. 
Suppose that we have $d\in \mathbb N$ and mutually different $w_0, w_1, \ldots, w_{d}\in V$ 
such that $w_0=s$, $w_d=t$ and $(w_{i-1}, w_i)\in E$ with label greater than $2$ for any $i\in\{1,\ldots,d\}$.
Moreover, suppose that $w_i\in V_0$ if $0<i<d$.
Set $\tau:=w_0w_1\cdots w_d\in A_\Gamma$. 
Let $U\in \Delta_{\Gamma,\Gamma_0}$ with $s,t\in U$. 
Then, there exists no square in $C_{\Gamma,\Gamma_0}$ 
containing both edges $[A_U,A_{U\setminus\{s\}}]$ and $[A_U,\tau A_{U\setminus\{t\}}]$.
That is, 
there exists no $g\in A_{U\setminus\{s\}}$ such that $gA_{U\setminus\{t\}}=\tau A_{U\setminus\{t\}}$.
\end{lemma}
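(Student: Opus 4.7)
Our plan is to reformulate the conclusion algebraically and then pass to the Coxeter quotient. The condition in the lemma is equivalent to $\tau \notin A_{U_s} \cdot A_{U_t}$, where $U_s := U\setminus\{s\}$ and $U_t := U\setminus\{t\}$, since $gA_{U\setminus\{t\}} = \tau A_{U\setminus\{t\}}$ amounts to $g \in \tau A_{U_t}$. Because the canonical surjection $\pi\colon A_\Gamma \to W_\Gamma$ sends $A_{U_s} A_{U_t}$ onto $W_{U_s} W_{U_t}$, it will suffice to show $\bar\tau := \pi(\tau) \notin W_{U_s} W_{U_t}$ in the associated Coxeter group.

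The image $\bar\tau$ is a Coxeter element of the parabolic subgroup $W_{V_\tau}$ with $V_\tau := \{s, w_1, \ldots, w_{d-1}, t\}$. A short induction on $d$ will show that the word $sw_1 \cdots w_{d-1} t$ is reduced of length $d+1$: since each successive letter is distinct from all previous ones and therefore lies outside the support of the current partial product, appending it strictly increases the Coxeter length. In particular, $t$ is the last letter of a reduced expression for $\bar\tau$, so $t$ belongs to the right descent set $D_R(\bar\tau)$.

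The key step will be to prove $D_L(\bar\tau) = \{s\}$, where $D_L, D_R$ denote the left and right descent sets. Clearly $s \in D_L(\bar\tau)$. For any other $u \in V_\tau$, the letter $u$ has an immediate predecessor $u'$ in the word $sw_1 \cdots w_{d-1} t$ (namely $w_{j-1}$ if $u = w_j$, or $w_{d-1}$ if $u = t$) with $\mu(u', u) > 2$. Orienting the Coxeter graph on $V_\tau$ from earlier- to later-appearing letters yields an acyclic orientation in which $s$ is the unique source, since every other vertex has an incoming edge from its immediate predecessor. Invoking the classical characterization of left descents of a Coxeter element as the sources of this orientation (which one can prove by induction via braid and commutation moves), we conclude $D_L(\bar\tau) = \{s\}$.

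For the final contradiction, suppose $\bar\tau \in W_{U_s} W_{U_t}$. By Kilmoyer's theorem on parabolic double cosets, applied to the double coset of the identity (whose minimal-length representative is $1$), we may write $\bar\tau = xy$ with $x \in W_{U_s}$, $y \in W_{U_t}$, and $\ell(\bar\tau) = \ell(x) + \ell(y)$. If $x \neq 1$, concatenating reduced expressions for $x$ and $y$ yields a reduced expression for $\bar\tau$ whose first letter lies in $U_s$; but this first letter must belong to $D_L(\bar\tau) = \{s\}$, forcing the contradiction $s \in U_s$. If $x = 1$, then $\bar\tau \in W_{U_t}$, so every reduced expression for $\bar\tau$ uses only letters from $U_t$; in particular $D_R(\bar\tau) \subset U_t$, contradicting $t \in D_R(\bar\tau)$. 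Hence $\bar\tau \notin W_{U_s} W_{U_t}$, giving $\tau \notin A_{U_s} A_{U_t}$. The hardest part will be the identification $D_L(\bar\tau) = \{s\}$, which crucially uses the hypothesis that every consecutive label $\mu(w_{i-1}, w_i)$ exceeds $2$.
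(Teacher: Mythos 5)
Your proof is correct, and it takes a genuinely different route from the paper's. Both arguments reduce the claim to $\bar\tau \notin W_{U_s} W_{U_t}$, where $U_s := U\setminus\{s\}$ and $U_t := U\setminus\{t\}$, and both exploit the same combinatorial fact: $\bar\tau$ is a Coxeter element of $W_{V_\tau}$ in which consecutive letters of $sw_1\cdots w_{d-1}t$ never commute. The paper upgrades this to the statement that $sw_1\cdots w_{d-1}t$ is the \emph{unique} reduced word for $\bar\tau$ (citing Shi), and then applies the deletion property: a hypothetical word $a_1\cdots a_{d_1}b_1\cdots b_{d_2}$ for $\bar\tau$ with $a_i\in U_s$ and $b_j\in U_t$ must contain that reduced word as a subword, yet there the occurrence of $t$ (necessarily among the $a_i$'s) precedes that of $s$ (necessarily among the $b_j$'s) --- the wrong order. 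You instead compute descent sets, showing $D_L(\bar\tau)=\{s\}$ and $t\in D_R(\bar\tau)$ via the source/sink characterization for Coxeter elements, and then invoke the length-additive parabolic double-coset factorization (Kilmoyer/Howlett) to write $\bar\tau=xy$ with $x\in W_{U_s}$, $y\in W_{U_t}$, $\ell(\bar\tau)=\ell(x)+\ell(y)$; the descent constraints rule out both $x\neq 1$ and $x=1$. Your version costs more machinery (the double-coset factorization and the descent characterization, rather than just the deletion property), but buys sharper structural information --- it pins down exactly which generators can begin or end a reduced word for $\bar\tau$. One small simplification: in the case $x=1$ you need not appeal to $D_R(\bar\tau)$ at all, since $\bar\tau\in W_{U_t}$ would already force $\mathrm{supp}(\bar\tau)=V_\tau\subset U_t$, directly contradicting $t\notin U_t$.
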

\noindent See Figure~\ref{square_fig}. 
\begin{figure}
\begin{center}
\includegraphics[width=8cm,pagebox=cropbox,clip]{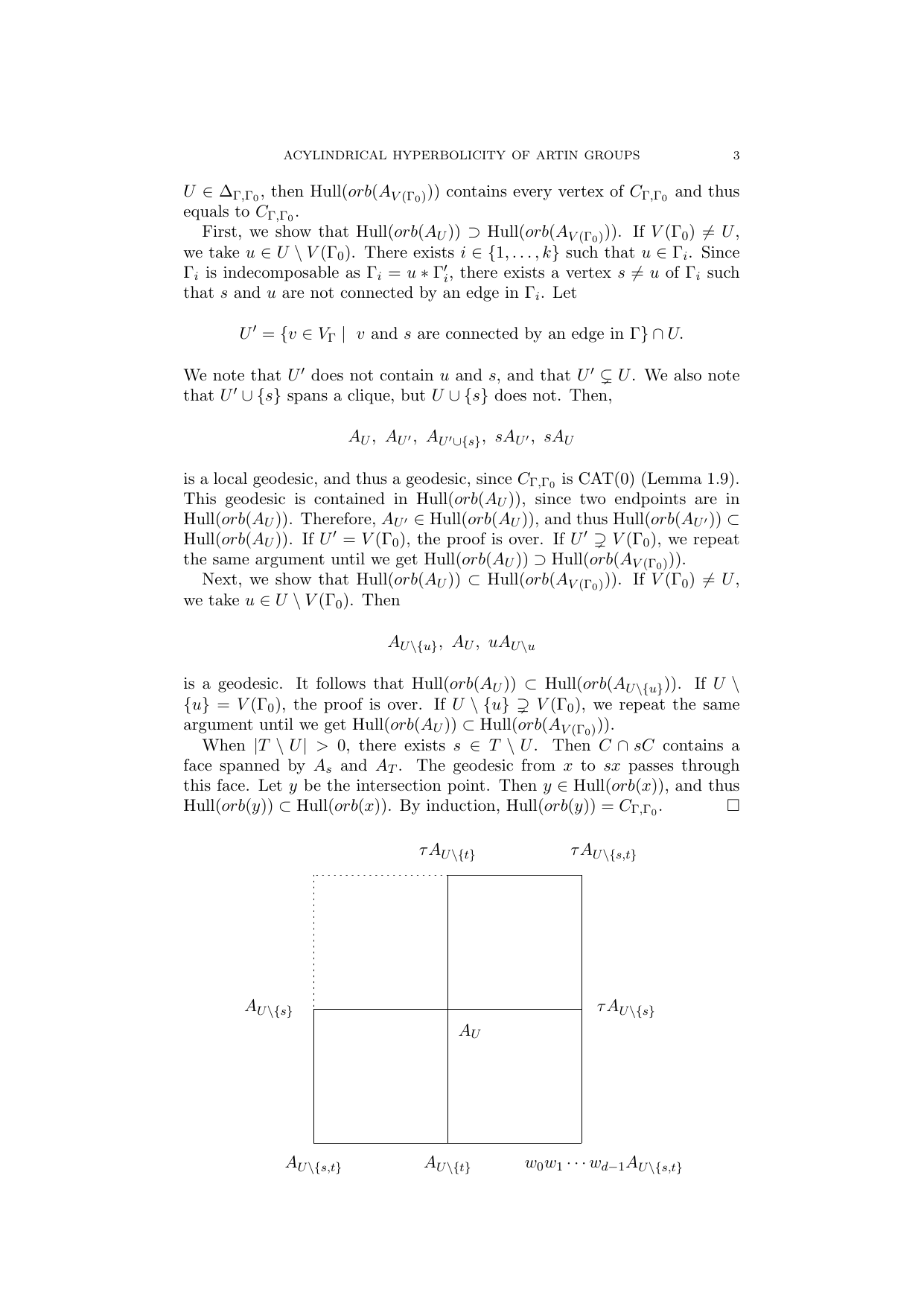} 
\caption{Squares around $A_{U}$ where $s=w_0$, $t=w_d$ and $\tau=w_0w_1\cdots w_d$.}\label{square_fig}
\end{center}
\end{figure}
\begin{proof}
Denote by $\bar{\tau}$ the image of $\tau$ 
by the natural projection $A_\Gamma\to W_\Gamma$. 
Since $w_0,w_1,\ldots,w_d$ are mutually different, 
the word $w_0w_1\cdots w_d$ is a reduced expression of $\bar{\tau}$. 
Moreover this is a unique reduced word representing $\bar{\tau}$ 
because $\bar{\tau}$ is regarded 
as a Coxeter element in $W_{\{w_0,w_1,\ldots,w_d\}}$ and two elements $w_{i-1}$ 
and $w_i$ cannot commute for any $i\in\{1,\ldots,d\}$ 
(see, for example, \cite{Shi}). 

Assume that 
we have $g\in A_{U\setminus\{s\}}$ such that $gA_{U\setminus\{t\}}=\tau A_{U\setminus\{t\}}$. 
Then we have $\tau \in A_{U\setminus\{s\}}A_{U\setminus\{t\}}$, and thus 
$\bar{\tau}\in W_{U\setminus\{s\}}W_{U\setminus\{t\}}$.
Take $a_1,\ldots, a_{d_1}\in U\setminus\{s\}$ and $b_1,\ldots, b_{d_2}\in U\setminus\{t\}$ 
such that the word $a_1\cdots a_{d_1}b_1\cdots b_{d_2}$ represents $\bar{\tau}$. 
Then, by the deletion property for Coxeter groups (see, for example, \cite{BB}), the word 
$a_1\cdots a_{d_1}b_1\cdots b_{d_2}$ contains a reduced expression for $\bar{\tau}$ as a subword. 
In any such subword, the letter $t$ precedes $s$. 
This contradicts the fact that $w_0=s$ precedes $w_d=t$ 
in the unique reduced expression $w_0w_1\cdots w_d$ of $\bar{\tau}$. 
\end{proof}

\section{Proof of Theorem~\ref{main}}\label{main_proof}
We show our main theorem (Theorem~\ref{main}).
In fact, we prove the following:
\begin{theorem}\label{main-4}
Let $A_\Gamma$ be an Artin group associated with $\Gamma$. 
Suppose that $\Gamma$ is not a clique. 
Then, the following are equivalent:
\begin{enumerate}
\item $A_\Gamma$ is irreducible; 
\item $A_\Gamma$ has a WPD contracting element with respect to the isometric action on the reduced clique-cube complex;
\item $A_\Gamma$ is acylindrically hyperbolic; 
\item $A_\Gamma$ is virtually directly indecomposable; 
that is, it cannot be decomposed as a direct product 
of two infinite subgroups.
\end{enumerate}
\end{theorem}
In this section, let $A_\Gamma$ be an Artin group associated with a defining graph $\Gamma$ with the vertex set $V(\Gamma)=V$ and the edge set $E(\Gamma)=E$,
as in Section~\ref{defining_graph}. 
We suppose that $\Gamma$ is not a clique. 

\subsection{Proof of $(2)\Rightarrow(3),(3)\Rightarrow(4),(4)\Rightarrow(1)$ in Theorem~\ref{main-4}}\label{part}
We show that $(2)\Rightarrow(3),(3)\Rightarrow(4),(4)\Rightarrow(1)$ in Theorem~\ref{main-4}. 

First, $(2)\Rightarrow(3)$ follows from Theorem~\ref{BBF}. 

Next, we show $(3)\Rightarrow(4)$. 
Let $A_\Gamma$ be acylindrically hyperbolic. If $A_\Gamma$ is isomorphic to a direct product $A_1\times A_2$, then 
either $A_1$ or $A_2$ is finite by acylindrical hyperbolicity \cite[Corollary 7.3]{Osin}. 

Finally, because every non-trivial Artin group is infinite, $(4)\Rightarrow(1)$ is clear. 

\subsection{Proof of $(1)\Rightarrow(2)$ in Theorem~\ref{main-4}}
We give a proof of $(1)\Rightarrow(2)$ in Theorem~\ref{main-4}. 
Suppose that $A_\Gamma$ is irreducible; that is, $\Gamma^t$ is connected. 
By noting that $\Gamma$ is not a clique, we consider the unique join decomposition $\Gamma=\Gamma_0\ast\Gamma_\ast=\Gamma_0\ast(\ast_{1\le i\le k}\Gamma_i)$ 
where $\Gamma_0$ is the maximal clique factor and each $\Gamma_i$ ($1\le i\le k$) 
is an indecomposable factor with at least two vertices 
(see Definition~\ref{decompositon}). 
We set $V_i=V(\Gamma_i)$ and $E_i=E(\Gamma_i)$ for each $i\in\{0,1,\ldots,k\}$. 
Also, we set $V_{\ast}=V(\Gamma_{\ast})$. 
Then, for every $i\neq 0$, the complement graph $(\Gamma_i)^c$ of $\Gamma_i$ 
is connected and the complement graph $(\Gamma_\ast)^c$ of $\Gamma_\ast$ is 
the disjoint union of connected components: $(\Gamma_\ast)^c=\bigsqcup_{1\le i \le k}(\Gamma_i)^c$, by Lemma~\ref{join-decomposition_lem}.
When $k=1$ and $\Gamma_0=\emptyset$, $C_{\Gamma,\Gamma_0}=C_{\Gamma}$ and a WPD contracting element is given in \cite{Charney}. 
When $k>1$ and $\Gamma_0=\emptyset$, a WPD contracting element is given in \cite{KO2}. 
Our goal in this section is to give a WPD contracting element with respect to the action on the reduced clique-cube complex $C_{\Gamma,\Gamma_0}$ in our general setting. 
Indeed, we explicitly construct three elements: $\gamma_{\natural}\in A_{V_{\ast}}$, and $\gamma_{\sharp}, \gamma_{\flat}\in A_{\Gamma}$ which have some features aligned with Theorem~\ref{criterion}, and show that their product $\gamma_{\natural\sharp\flat}=\gamma_{\natural}\gamma_{\sharp}\gamma_{\flat}\in A_{\Gamma}$ is indeed a WPD contracting element.
It may be useful for understanding the construction of $\gamma_{\natural\sharp\flat}$ to look at Section~\ref{example_sec}, 
which treats a concrete example.

We define $Q(\Gamma,\Gamma_0)$ as a finite simple graph with 
\begin{align}
\begin{aligned}\label{QGamma}
&V(Q(\Gamma,\Gamma_0))=\{V_i\mid 1\leq i\leq k\},\\
&E(Q(\Gamma,\Gamma_0))=\left\{(V_i,V_j) \middle|
\begin{array}{l}
1\leq i,\ j\leq k,\ i\neq j,\\
(\Gamma_i)^t\text{ and }(\Gamma_j)^t\text{ are connected in }(\Gamma_0\ast\Gamma_i\ast\Gamma_j)^t
\end{array}
\right\}.
\end{aligned}
\end{align}
Note that for $i\ne 0$, $(\Gamma_i)^t$ is connected because $(\Gamma_i)^c$ is connected, 
$V((\Gamma_i)^c)=V((\Gamma_i)^t)=V_i$, and $E((\Gamma_i)^c)\subset E((\Gamma_i)^t)$. 
Note that the following are equivalent for different $i, j\in \{1,\ldots,k\}$: 
\begin{itemize}
\item[(I)] $(\Gamma_i)^t$ and $(\Gamma_j)^t$ are connected in $(\Gamma_0\ast\Gamma_i\ast\Gamma_j)^t$; 
\item[(II)] there exists a path 
from a vertex of $\Gamma_i$ and a vertex of $\Gamma_j$ on $\Gamma_0\ast\Gamma_i\ast\Gamma_j$,
consisting of edges with label greater than $2$.
\end{itemize}
Because $A_\Gamma$ is irreducible, i.e., $\Gamma^t$ is connected, 
$Q(\Gamma,\Gamma_0)$ is connected. 

We take a spanning tree $T$ of $Q(\Gamma,\Gamma_0)$. 
We regard $T$ as a rooted tree with the root $V_1$.
By trading indices of $V_2,\ldots,V_k$ if necessary, 
we suppose that $i<j$ only if $V_i$ is not farther than $V_j$ from $V_1$ in $T$. 

For each $i,j$ with $i<j$ and $(V_i,V_j)\in E(T)$, 
take a path on $\Gamma$;
\begin{align}\label{p_ij}
p_{i,j}=(w_{i,j,0},w_{i,j,1},\ldots,w_{i,j,d_{i,j}})
\end{align}
consisting of edges with label greater than $2$
such that 
$w_{i,j,0}\in V_i$, $w_{i,j,d_{i,j}}\in V_j$ and
$w_{i,j,1},\ldots,w_{i,j,d_{i,j}-1}\in V_0$.
Moreover, set $d_{j,i}:=d_{i,j}$ and a path on $\Gamma$;
\begin{align}\label{p_ji}
p_{j,i}=(w_{j,i,0},w_{j,i,1},\ldots,w_{j,i,d_{j,i}}):=(w_{i,j,d_{i,j}},\ldots,w_{i,j,1},w_{i,j,0}).
\end{align}
For any $i,j\in \{1,\ldots,k\}$ with $(V_i,V_j)\in E(T)$, 
set 
\begin{align}\label{st_ij}
s_{i,j}:=w_{i,j,0}\in V_i,\ t_{i,j}:=w_{i,j,d_{i,j}}\in V_j
\end{align} 
and 
\begin{align}\label{tau_ij}
\tau_{i,j}=w_{i,j,0}w_{i,j,1}\cdots w_{i,j,d_{i,j}}\in A_\Gamma.
\end{align} 
Then we have
$s_{j,i}=t_{i,j}$ in $V_j$, $t_{j,i}=s_{i,j}$ in $V_i$ and 
$\tau_{j,i}=w_{i,j,d_{i,j}}\cdots w_{i,j,1}w_{i,j,0}\in A_{\Gamma}.$

When $k\geq 2$, we take a closed path 
\begin{equation}\label{pathtree}
(V_{i_1},\dots,V_{i_r},V_{i_{r+1}})
\end{equation}
with $i_1=i_{r+1}=1$ 
on the spanning tree $T$ of $Q(\Gamma,\Gamma_0)$ 
passing through every vertex at least once. 
Since $r$ is the length of the closed path on the tree, $r$ is an even number.
If $k=1$, then $T$ has no edges, and we set $r=0$.

To construct $\gamma_{\natural}$ and $\gamma_{\flat}$,
we start with the following lemma, which is almost the same as \cite[Lemma 6.1]{KO2}. 
See Section~\ref{matrix} for notations related to matrices.
\begin{lemma}\label{closed}
There exist $n\in \mathbb{N}$, a matrix $M_{\natural}=(v_{i,l})\in \mathcal{M}(k, n; V)$, and a map 
\begin{equation}\label{l-flat_eq}
l_{\flat}:\left\{(i,j)\middle| \begin{array}{l}
 1\leq i,j\leq k,\\
 (V_i,V_j) \in E(T)
 \end{array}
\right\}
\to \{1,\ldots, n\}
\end{equation}
which satisfy the following:
\begin{itemize}
\item[(i)] 
For each $i\in \{1,\ldots,k\}$, $(v_{i,1},\ldots, v_{i,n}, v_{i,1})$ is a closed path on $(\Gamma_{i})^c$ that passes through every vertex at least once;
\item[(ii)]
For each $i, j\in \{1,\ldots,k\}$ with $(V_i,V_j)\in E(T)$, 
$v_{i,l_{\flat}(i,j)}=s_{i,j}$, $v_{j,l_{\flat}(i,j)}=t_{i,j}$;
\item[(iii)] $l_{\flat}({j,i})=l_{\flat}(i,j)$. 
\end{itemize}
\end{lemma}

\begin{proof}
For any $i\in \{1,\ldots,k\}$, 
we consider the length $n_i$ of a closed path
on $(\Gamma_{i})^c$ passing through every vertex at least once. 
Set $n=\prod_{1\le i\le k}n_i$. 
Then, for any $i\in \{1,\ldots,k\}$, 
by concatenating $\frac{n}{n_i}$ copies 
of a closed path of length $n_i$ on $(\Gamma_{i})^c$ 
passing through every vertex at least once, 
we have a closed path $(v'_{i,1},\ldots,v'_{i,n},v'_{i, n+1})$ 
with $v'_{i,1}=v'_{i,n+1}$ on $(\Gamma_{i})^c$ 
passing through every vertex at least $\frac{n}{n_i}$ times (in particular, at least once). 

We set $(v_{1,1},\ldots,v_{1, n},v_{1, n+1}):=(v'_{1,1},\ldots,v'_{1, n},v'_{1, n+1})$.
For any $j\in \{2,\ldots,k\}$, we define 
$(v_{j,1},\ldots,v_{j, n},v_{j, n+1})$ inductively as follows. 
Take $j\in \{2,\ldots,k\}$. 
Suppose that $(v_{i,1},\ldots,v_{i, n},v_{i, n+1})$ 
is defined for $i\in \{1,\ldots, j-1\}$ with $(V_i,V_j)\in E(T)$. 
Note that such $i$ is unique for $j$.
Then, we define $l_{\flat}(i,j)$ as the minimum $l$ such that $v_{i,l}=s_{i,j}$. 
By a cyclic permutation of  $v'_{j,1},\ldots,v'_{j, n}$, 
we have $v_{j,1},\ldots,v_{j, n}$ such that $v_{j,l_{\flat}(i,j)}=t_{i,j}$. 
By setting $v_{j,n+1}:=v_{j,1}$, we have 
$(v_{j,1},\ldots,v_{j, n},v_{j, n+1})$. 
Finally, for any $i,j$ with $1\le i<j\le k$ and $(V_i,V_j)\in E(T)$, we set $l_{\flat}(j,i):=l_{\flat}(i,j)$. 
\end{proof}
We take $n\in \mathbb{N}$, $M_{\natural}=(v_{i,l})\in \mathcal{M}(k,n;V)$ and a map $l_{\flat}$ as in Lemma~\ref{closed}.
In the case where $k=1$, $E(T)=\emptyset$,
and thus $l_{\flat}$ and the conditions (ii), (iii) are meaningless. 
Nonetheless, the condition (i) remains meaningful.

Now we define $\gamma_\natural\in A_{V_{\ast}}$ as 
\begin{equation}\label{gammaelement^0}
\gamma_\natural:=prod(M_{\natural})=(v_{1,1}v_{2,1}\cdots v_{k,1})\cdots (v_{1,n}v_{2,n}\cdots v_{k,n}).
\end{equation}
When $\Gamma_0=\emptyset$ and $k=1$, this $\gamma_{\natural}$ is a WPD contracting element on $C_{\Gamma,\Gamma_0}=C_{\Gamma}$ \cite{Charney}.
We do not directly need this fact.
Even in the general setting, we can confirm that $\gamma_{\natural}$ fits the condition (i) of Theorem~\ref{criterion}.
Unfortunately, it is unclear whether $\gamma_{\natural}$ fits the conditions (ii) and (iii).
To more easily approach these conditions, we prepare two modifications $\gamma_{\sharp}$ and $\gamma_{\flat}$ based on powers of $\gamma_{\natural}$.

We prepare for the definition of $\gamma_\flat$.
Let $e$ be the identity in $A_{\Gamma}$. 
When $k=1$ (for example, $\Gamma$ in Figure~\ref{new_ex_fig}), 
we define $\gamma_{\flat}=e$.
Suppose that $k\geq 2$. 
Recall that we have $n\in \mathbb{N}$, $M_{\natural}=(v_{i,l})\in \mathcal{M}(k, n; V)$,
a map $l_{\flat}$ as in Lemma~\ref{closed},
and a closed path $(V_{i_1},\dots,V_{i_r},V_{i_{r+1}})$ with $i_1=i_{r+1}=1$, defined in (\ref{pathtree}).
Note that $r\geq 2$, since $k\geq 2$.
We define supplementary matrices $M'_{\flat}(a), M''_{\flat}(a)\in \mathcal{M}(k, n; A_{\Gamma})$ $(a\in\{1,\ldots, r\})$ by
\begin{equation}\label{M_natural_a_1}
\left(M'_{\flat}(a)\right)[i,l]=
\begin{cases}
v_{i,l}^{-1} \quad &(i\in\{i_a,i_{a+1}\}, l=l_{\flat}(i_a,i_{a+1}))\\
e\quad &(\text{otherwise}),
\end{cases}
\end{equation}
\begin{equation}\label{M_natural_a_2}
\left(M''_{\flat}(a)\right)[i,l]=
\begin{cases}
\tau_{i_a,i_{a+1}} \quad &(i=1, l=l_{\flat}(i_a,i_{a+1}))\\
e\quad &(\text{otherwise}).
\end{cases}
\end{equation}
Using $M'_{\flat}(a)$ and $M''_{\flat}(a)$, we define $M_{\flat}(a)\in \mathcal{M}(k, n; A_{\Gamma})$ by
\begin{equation}\label{M_natural_a}
M_{\flat}(a):=M''_{\flat}(a)\odot M'_{\flat}(a)\odot M_{\natural},
\end{equation}
where $\odot$ is the Hadamard product of matrices (see $(\ref{odot_map})$).
$M_{\flat}(a)$ can be considered as a modification of $M_{\natural}$.
We define $M_{\flat}\in \mathcal{M}(k, nr; A_{\Gamma})$ by
\begin{equation}\label{M_natural}
M_{\flat}=cbind\left(M_{\flat}(1),\ldots ,M_{\flat}(r)\right).
\end{equation}
For the definition of $cbind$, see (\ref{cbind-def}).

We now define $\gamma_\flat\in A_{\Gamma}$ as 
\begin{equation}\label{gammaelement'}
\gamma_\flat:=prod(M_{\flat})=prod(M_{\flat}(1))\cdots prod(M_{\flat}(r)).
\end{equation}
For the definition of the map $prod$, see (\ref{prod1_map}) and (\ref{prod2_map}).
$\gamma_{\flat}$ is a modification of $\gamma_{\natural}^r$.
When $k\geq 2$ and $\Gamma_0= \emptyset$, we can take $\gamma_{\flat}$ as a WPD element \cite{KO2}.
This fact is not used directly.
To deal with the case where $\Gamma_0\neq \emptyset$, we need the other modification, $\gamma_{\sharp}$.

To explain the construction of $\gamma_{\sharp}$, we introduce the ``height'' $h$ of elements of $V_0$ in the case $V_0\neq \emptyset$.
Otherwise, we define $\gamma_{\sharp}=e$.
For $h\ge 1$, we set 
\begin{equation}\label{W(h)}
W(h):=\{w\in V_0 \mid \min_{v\in V_\ast}d_{\Gamma^t}(w,v)=h\}.
\end{equation} 
Since $\Gamma^t$ is connected, we have $\mathfrak{h}\in \mathbb N$ such that  $W(\mathfrak{h})\neq \emptyset$ 
and $V_0=W(1)\sqcup \cdots \sqcup W(\mathfrak{h})$. 
Suppose that $\mathfrak{h}\geq 2$. Otherwise, we define $\gamma_{\sharp}=e$.
We define two maps 
\begin{align}
&i_{\sharp}: V_0\to \{1,\ldots, k\},\label{i_flat} \\ 
&l_{\sharp}: V_0\to \{1,\ldots, n\}\label{l_flat}
\end{align}
such that $(i_{\sharp}(w),l_{\sharp}(w))$ is the minimum of $(i,l)$ indexing $v_{i,l}\in V_*$ with 
$$d_{\Gamma^t}(w,v_{i,l})=\min_{v\in V_\ast}d_{\Gamma^t}(w,v)$$
with respect to the order $<$ on $\{1,\ldots,k\}\times\{1,\ldots,n\}$
defined by
$$(i,l)< (i',l')\Leftrightarrow (l<l')\text{ or } (l=l'\text{ and }  i< i').$$ 
Note that this is a slightly unusual dictionary order, which considers the second factor first.
In particular, for $w\in W(1)$, $(i_{\sharp}(w),l_{\sharp}(w))$ is the minimum of $(i,l)$ with $\mu(w,v_{i,l})>2$ with respect to the above order.

We set 
\begin{equation}\label{W(0)}
W(0):=\{v_{i_{\sharp}(w),l_{\sharp}(w)}\in V_\ast \mid w\in W(1)\},
\end{equation}
which equals $\{v_{i_{\sharp}(w),l_{\sharp}(w)}\in V_\ast \mid w\in V_0\}$. 
Obviously, we have $W(0)\cap V_0=\emptyset$.
We extend two maps $i_{\sharp}$ and $l_{\sharp}$ on $W(0)\sqcup V_0$ by defining
$(i_{\sharp}(w),l_{\sharp}(w))$ for $w\in W(0)$ as the minimum of $(i,l)$ with $w=v_{i,l}$ 
with respect to the order $<$ on $\{1,\ldots,k\}\times\{1,\ldots,n\}$.
Note that for $w\in W(0)$, $i$ and $l$ with $w=v_{i,l}$, $i$ is unique, but $l$ is not necessarily unique. 

Next, by using the order $<$ on $\{1,\ldots,k\}\times\{1,\ldots,n\}$ 
and the two maps $i_{\sharp}$, $l_{\sharp}$ on $W(0)\sqcup V_0=W(0)\sqcup W(1)\sqcup \cdots \sqcup W(\mathfrak{h})$,
we define a total order $\prec$ on $W(0)\sqcup V_0$ as follows. 
First, we take a total order $\prec$ on $W(0)$ such that 
for any $w,w'\in W(0)$, $w\prec w'$ if and only if  $(i_{\sharp}(w),l_{\sharp}(w))< (i_{\sharp}(w'),l_{\sharp}(w'))$.
Next, suppose that for $1\leq h\leq \mathfrak{h}$, $W(0)\sqcup\cdots\sqcup W(h-1)$ is equipped with a total order $\prec$. 
Then we define a map 
$$\pi_h:W(h)\to W(h-1)$$
as $\pi_h(w)$ $(w\in W(h))$ is the minimum of $w'\in W(h-1)$ with $\mu(w,w')>2$ 
with respect to the given total order $\prec$. 
We take a total order $\prec$ on $W(h)$ such that 
for any different $w,w'\in W(h)$, $w\prec w'$ only if $\pi(w)\preceq \pi(w')$.
Moreover, for any $w\in W(0)\sqcup\cdots\sqcup W(h-1)$ and $w'\in W(h)$, 
we set $w\prec w'$. 
Then $\prec$ on $W(0)\sqcup\cdots\sqcup W(h-1)$ is extended on $W(0)\sqcup\cdots\sqcup W(h)$. 
Finally, we have the order $\prec$ on $W(0)\sqcup V_0=W(0)\sqcup \cdots \sqcup W(\mathfrak{h})$. 
We define a map
\begin{equation}\label{w(h)}
\pi:W(1)\sqcup\cdots\sqcup W(\mathfrak{h})\to W(0)\sqcup\cdots\sqcup W(\mathfrak{h}-1),
\end{equation}
by $\pi(w)=\pi_{h}(w)\in W(h-1)$ for $w\in W(h)$.
For any $w\in W(0)\sqcup V_0$, let $h\in \{0,\ldots,\mathfrak{h}\}$ be the height of $w$; that is, $w\in W(h)$. 
We take a path from $w$ to $v_{i_{\sharp}(w), l_{\sharp}(w)}$ on $\Gamma^t$;
\begin{align}\label{w(h)_path}
(w(h),w(h-1),\ldots,w(0)):=(w, \pi(w),\pi^2(w),\ldots, \pi^{h}(w)).
\end{align}
We note that $(i_{\sharp}(w),l_{\sharp}(w))=(i_{\sharp}(w(h')),l_{\sharp}(w(h')))$ for any $0\le h'\le h$. 
In particular, $w(0)=\pi^h(w)=v_{i_{\sharp}(w),l_{\sharp}(w)}$. \noindent See Figure~\ref{W_fig}. 
\begin{figure}
\begin{center}
\includegraphics[width=13cm,pagebox=cropbox,clip]{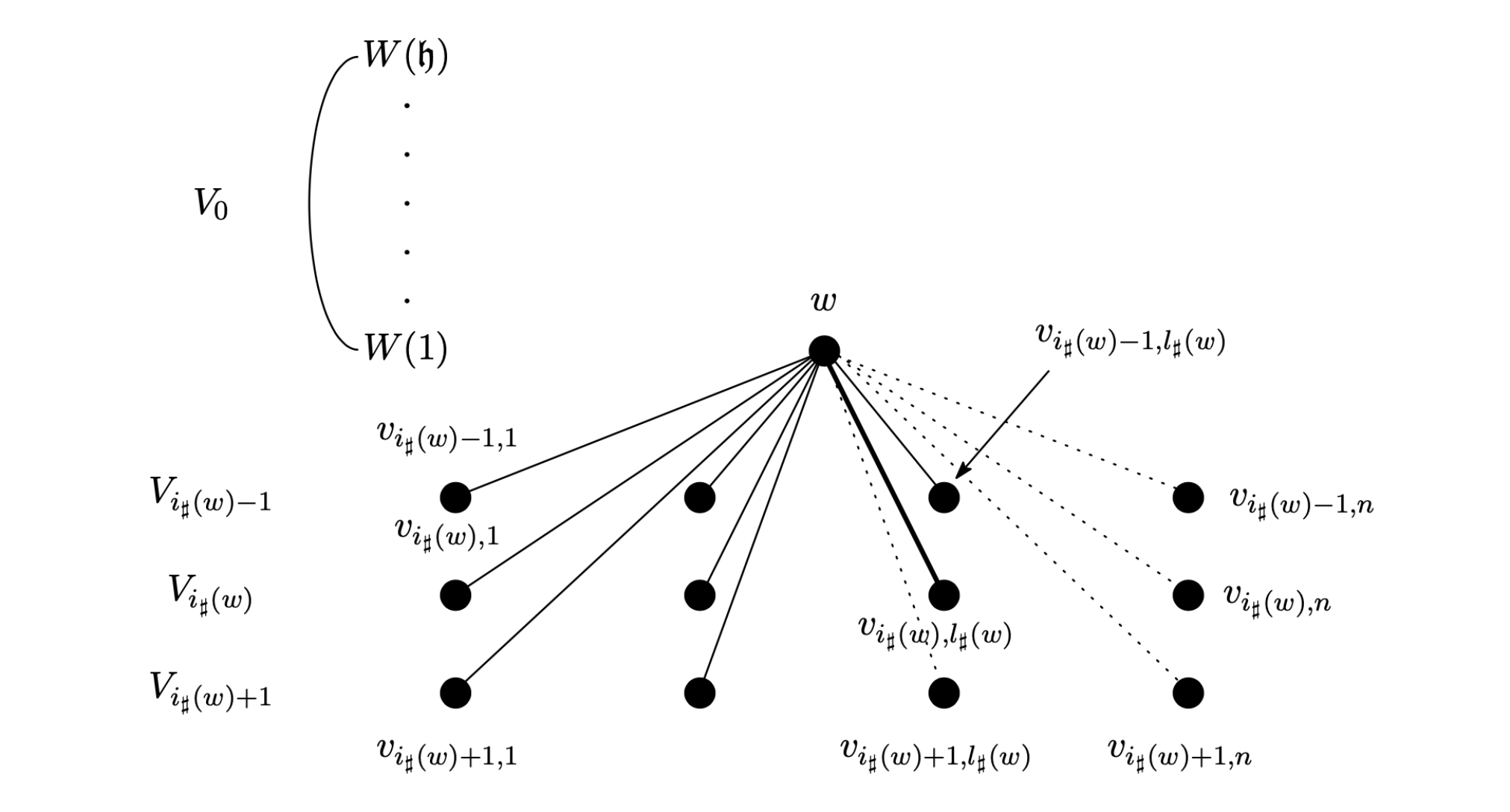}
\caption{In the figure, $w$ is a vertex in $W(1)$, and thin solid lines indicate edges labeled by $2$.
A thick solid line, which connects $w$ and $v_{i_{\sharp}(w), l_{\sharp}(w)}$, indicates an edge labeled by an integer greater than $2$. 
For all edges which are presented as dotted lines, we do not care about their label.
}\label{W_fig}
\end{center}
\end{figure}
Now we name all elements of $W(1)\sqcup \cdots \sqcup W(\mathfrak{h}-1)=V_0\setminus W(\mathfrak{h})$ 
as 
\begin{equation}\label{index_total-order}
w_1,w_2,\ldots,w_{\mathfrak{m}}
\end{equation} 
such that 
$w_1\prec w_2\prec \cdots \prec w_{\mathfrak{m}}$ where 
\begin{equation}\label{density_total-order}
\mathfrak{m}:=\#(W(1)\sqcup \cdots \sqcup W(\mathfrak{h}-1)). 
\end{equation}
Moreover, we set the heights $h_1, h_2,\ldots, h_{\mathfrak{m}}$ for $w_1, w_2, \ldots, w_{\mathfrak{m}}$, respectively.
We have $1=h_1\leq h_2\leq \cdots \leq h_{\mathfrak{m}}=\mathfrak{h}-1.$
The following is clear by the construction of the order $\prec$ and the map $\pi$.
\begin{lemma}\label{sharp_lem}
Let $h\in \{1,\ldots, \mathfrak{h}\}$ and $w\in W(h)$.
\begin{enumerate}
\item Let $h=1$. 
Let $w'\in V_0$ with $w'\succeq w$. 
For any $(i,l)\in \{1,\ldots,k\}\times\{1,\ldots,n\}$ with $(i,l)<(i_{\sharp}(w), l_{\sharp}(w))$, we have $\mu(v_{i,l},w')=2$.
\item Let $h\geq 2$. Let $w'\in V_0$ with $w'\succeq w$. \begin{enumerate}
\item[(i)] For any $v\in V_{\ast}$, we have $\mu(v,w')=2$. 
\item[(ii)] For any $w''\in V_0$ with $w''\prec \pi(w)$, we have $\mu(w'',w')=2$.
\end{enumerate}
\end{enumerate}
\end{lemma}
\noindent See Figure~\ref{W_fig}.

We now explain the construction of $\gamma_{\sharp}$.
If $V_0=\emptyset$ or $\mathfrak{h}=1$,
then we have already defined $\gamma_{\sharp}=e$.
Suppose that $V_0\neq \emptyset$ and that $\mathfrak{h}\geq 2$.
We define supplementary matrices
$M'_{\sharp}(\mathfrak{j}), M''_{\sharp}(\mathfrak{j}) \in \mathcal{M}(k,n;A_{\Gamma})$ $(\mathfrak{j}\in \{1,\ldots,\mathfrak{m}\})$ by
\begin{equation}\label{M_flat_j_1}
\left(M'_{\sharp}(\mathfrak{j})\right)[i,l]=
\begin{cases}
v_{i,l}^{-1} &\quad(i=i_{\sharp}(w_{\mathfrak{j}}), l=l_{\sharp}(w_{\mathfrak{j}}))\\
e &\quad(\text{otherwise})
\end{cases}
\end{equation}
and
\begin{equation}\label{M_flat_j_2}
\left(M''_{\sharp}(\mathfrak{j})\right)[i,l]=
\begin{cases}
w_{\mathfrak{j}}\left(h_{\mathfrak{j}}\right) w_{\mathfrak{j}}\left(h_{\mathfrak{j}}-1\right)\cdots w_{\mathfrak{j}}(0) &\quad(i=k, l=l_{\sharp}(w_{\mathfrak{j}}))\\
e &\quad(\text{otherwise}),
\end{cases}
\end{equation}
where $h_{\mathfrak{j}}$ is the height of $w_{\mathfrak{j}}$.
Using $M'_{\sharp}(\mathfrak{j})$ and $M''_{\sharp}(\mathfrak{j})$, we define $M_{\sharp}(\mathfrak{j})\in \mathcal{M}(k, n; A_{\Gamma})$ by
\begin{equation}\label{M_flat_j}
M_{\sharp}(\mathfrak{j}):=M_{\natural}\odot M'_{\sharp}(\mathfrak{j})\odot M''_{\sharp}(\mathfrak{j}),
\end{equation}
where $\odot$ is the Hadamard product of matrices (see $(\ref{odot_map})$).
$M_{\sharp}(\mathfrak{j})$ can be considered as a modification of $M_{\natural}$.
We define $M_{\sharp}\in \mathcal{M}(k, n\mathfrak{m}; A_{\Gamma})$ by 
\begin{equation}\label{M_flat}
M_{\sharp}=cbind\left(M_{\sharp}(1),\ldots ,M_{\sharp}(\mathfrak{m})\right).
\end{equation}
For the definition of $cbind$, see (\ref{cbind-def}).
We now define $\gamma_\sharp\in A_{\Gamma}$ as 
\begin{equation}\label{gammaelement''}
\gamma_\sharp:=prod(M_{\sharp})=prod(M_{\sharp}(1))\cdots prod(M_{\sharp}(\mathfrak{m})).
\end{equation}
$\gamma_{\sharp}$ is a modification of $\gamma_{\natural}^{\mathfrak{m}}$.

We define $M_{\natural\sharp\flat}\in \mathcal{M}(k, (1+\mathfrak{m}+r)n; A_{\Gamma})$ by 
\begin{align}\label{M_naturalsharpflat}
M_{\natural\sharp\flat}:=cbind(M_{\natural},M_{\sharp},M_{\flat}).
\end{align}
Now, we define $\gamma_{\natural \sharp \flat}\in A_{\Gamma}$ by
\begin{equation}\label{gammaelement}
\gamma_{\natural\sharp\flat}:=prod(M_{\natural\sharp \flat})
=prod(M_{\natural})prod(M_{\sharp})prod(M_{\flat})=\gamma_{\natural}\gamma_{\sharp}\gamma_{\flat}.
\end{equation}

For later reference, we define supplementary elements 
$\lambda_l$ $(l\in\{1,\ldots, n\})$, $\lambda_{\sharp}(\mathfrak{j})$, $\gamma_{\sharp}(\mathfrak{j})$ $(\mathfrak{j}\in\{1,\ldots, \mathfrak{m}\})$, $\lambda_{\flat}(a)$, $\gamma_{\flat}(a)$ $(a\in\{1,\ldots, r\})$, 
$\gamma_{\natural\sharp\flat}[d]\in A_{\Gamma}$ $(d\in \{1,\ldots,(1+r+\mathfrak{m})n\})$.
For every $l\in \{1,\ldots,n\}$, 
we define $\lambda_l\in A_{V_{\ast}}\subset A_\Gamma$ by
\begin{equation}\label{lambda_l}
\lambda_l:=prod(M_{\natural}[\ ,l])=
v_{1,l}v_{2,l}\cdots v_{i,l}\cdots v_{k,l}.
\end{equation} 
For every $\mathfrak{j}\in\{1,\ldots, \mathfrak{m}\}$,
we define $\lambda_{\sharp}(\mathfrak{j}), \gamma_{\sharp}(\mathfrak{j})\in A_{\Gamma}$ by
\begin{align}\label{lambda_flat_j}
\lambda_{\sharp}(\mathfrak{j})&:=prod(M_{\sharp}(\mathfrak{j})[\ ,l_{\sharp}(w_{\mathfrak{j}})]),\\
\gamma_{\sharp}(\mathfrak{j})&:=prod(M_{\sharp}(\mathfrak{j})).
\end{align}
For every $a\in\{1,\ldots, r\}$,
we define $\lambda_{\flat}(a), \gamma_{\flat}(a)\in A_{\Gamma}$ by
\begin{align}\label{lambda_natural_a}
\lambda_{\flat}(a)&:=prod(M_{\flat}(a)[\ ,l_{\flat}(i_a,i_{a+1})]),\\
\gamma_{\flat}(a)&:=prod(M_{\flat}(a)).
\end{align}
Note that $\lambda_{\sharp}(\mathfrak{j})$ and $\lambda_{\flat}(a)$ 
can be regarded as modifications of $\lambda_{l_{\sharp}(w_{\mathfrak{j}})}$ and $\lambda_{l_{\flat}(i_a,i_{a+1})}$, respectively.
Also note that $\gamma_{\sharp}(\mathfrak{j})$ and $\gamma_{\flat}(a)$
can be regarded as modifications of $\gamma_{\natural}$.
For every $d\in \{1,\ldots,(1+\mathfrak{m}+r)n\}$, let
$\gamma_{\natural\sharp\flat}[d]$ be the product of the components of the first $d$ columns of $M_{\natural \sharp \flat}\in \mathcal{M}(k, (1+\mathfrak{m}+r)n; A_{\Gamma})$. That is,
\begin{equation}\label{lambda_flat}
\gamma_{\natural\sharp\flat}[d]:=prod(M_{\natural \sharp \flat}[\ ,1:d]).
\end{equation}

We recall the following for convenience: 
\begin{enumerate}
\item $k$ is the number of indecomposable factors of the unique join decomposition of $\Gamma_\ast$; 
\item $r$ is the length of the closed path (\ref{pathtree}) on $T$;
\item $n$ is the common length of the closed paths on $(\Gamma_i)^c$ for all $i\in\{1,\ldots,k\}$ taken in Lemma~\ref{closed}; 
\item $\mathfrak{m}$ is $\#(W(1)\sqcup \cdots \sqcup W(\mathfrak{h}-1))$ as defined in $(\ref{density_total-order})$. 
\end{enumerate}

For $1\leq l\le n$, set 
$$U_l:=V_0\sqcup \{v_{1,l},v_{2,l},\ldots, v_{k,l}\}.$$
Then, $U_l$ spans a clique containing $\Gamma_0$ in $\Gamma$; that is, $U_l\in \Delta_{\Gamma,\Gamma_0}$. 
Hence, we have a $k$-dimensional cube $[A_{V_0}, A_{U_l}]$ in $C_{\Gamma,\Gamma_0}$. 
The hyperplanes dual to edges of $[A_{V_0},A_{U_l}]$ are of $v_{i,l}$-type ($i\in \{1,\ldots,k\}$).  
We denote such hyperplanes as $H_{i,l}$ ($i\in \{1,\ldots,k\}$). 

The following is a variant for $C_{\Gamma,\Gamma_0}$ of 
\cite[Lemma 6.2]{KO2} for $C_{\Gamma}$. 
\begin{lemma}\label{key0}
For $i\in \{1,\ldots,k\}$, $l\in \{1,\ldots,n\}$, $\mathfrak{j}\in\{1,\ldots,\mathfrak{m}\}$ and $a\in\{1,\ldots,r\}$, we have the following: 
\begin{enumerate}
\item $H_{i,l}\cap H_{i,l+1}=\emptyset$, where we set $H_{i,n+1}:=H_{i,1}$; 
\item $H_{i,l}\cap \lambda_l H_{i,l}=\emptyset$; 
\item $H_{i,l_{\flat}(i_{a},i_{a+1})}\cap \lambda_{\flat}(a)H_{i,l_{\flat}(i_{a},i_{a+1})}=\emptyset$; 
\item $H_{i,l_{\sharp}(w_{\mathfrak{j}})}\cap \lambda_{\sharp}(\mathfrak{j})H_{i,l_{\sharp}(w_{\mathfrak{j}})}=\emptyset$; 
\item $[A_{V_0},A_{U_l}]\cap [A_{V_0},A_{U_{l+1}}]=\{A_{V_0}\}$, where we set $U_{n+1}:=U_{1}$;
\item $[A_{V_0},A_{U_l}]\cap \lambda_l [A_{V_0},A_{U_l}]=\{A_{U_l}\}$;
\item $[A_{V_0},A_{U_{l_{\flat}(i_{a},i_{a+1})}}]\cap \lambda_{\flat}(a)[A_{V_0},A_{U_{l_{\flat}(i_{a},i_{a+1})}}]=\{A_{U_{l_{\flat}(i_{a},i_{a+1})}}\}$.
\item $[A_{V_0},A_{U_{l_{\sharp}(w_{\mathfrak{j}})}}]\cap \lambda_{\sharp}(\mathfrak{j})[A_{V_0},A_{U_{l_{\sharp}(w_{\mathfrak{j}})}}]=\{A_{U_{l_{\sharp}(w_{\mathfrak{j}})}}\}$.
\end{enumerate}
\end{lemma}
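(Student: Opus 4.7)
The statement splits into four hyperplane non-intersection claims (1)--(4) and four cube intersection claims (5)--(8). I would establish (1)--(4) first and deduce (5)--(8) from them. The main tools are Remark~\ref{label}, which says that two hyperplanes of the same $v$-type are equal or disjoint while two of different types cross only if their types are adjacent in $\Gamma$, and Lemma~\ref{twistnew}, which rules out certain squares in $C_{\Gamma,\Gamma_0}$.

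For (1), the types $v_{i,l}$ and $v_{i,l+1}$ are distinct vertices of $V_i$ that are non-adjacent in $\Gamma_i$ (since they are adjacent in $(\Gamma_i)^c$ by Lemma~\ref{closed}); as both lie in the single factor $V_i$, no inter-factor join edge connects them, so $(v_{i,l},v_{i,l+1})\notin E(\Gamma)$, and Remark~\ref{label} concludes. For (2), (3), (4) the two hyperplanes share type $v_{i,l^*}$ (for the relevant $l^*$), so by Remark~\ref{label} they are equal or disjoint; I would show distinctness. In each case the multiplier $\lambda$ lies in $A_{U_{l^*}}$ (the twist subwords $\tau_{i_a,i_{a+1}}$ and $w_{\mathfrak{j}}(h)\cdots w_{\mathfrak{j}}(0)$ use only letters of $V(\Gamma_0)\cup V_\ast\subset U_{l^*}$), so both hyperplanes pass through $A_{U_{l^*}}$. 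The unique $v_{i,l^*}$-edge of the carrier of $H_{i,l^*}$ at $A_{U_{l^*}}$ is $[A_{U_{l^*}\setminus\{v_{i,l^*}\}},A_{U_{l^*}}]$, and the corresponding edge for $\lambda H_{i,l^*}$ is $[\lambda A_{U_{l^*}\setminus\{v_{i,l^*}\}},A_{U_{l^*}}]$; coincidence of the hyperplanes would force these edges to agree, i.e.\ $\lambda\in A_{U_{l^*}\setminus\{v_{i,l^*}\}}$. For (2), writing $\lambda_l=\lambda'v_{i,l}\lambda''$ with $\lambda',\lambda''\in A_{U_l\setminus\{v_{i,l}\}}$ would then force $v_{i,l}\in A_{U_l\setminus\{v_{i,l}\}}$, contradicting Lemma~\ref{van} applied to $A_{\{v_{i,l}\}}\cap A_{U_l\setminus\{v_{i,l}\}}=\{1\}$. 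For (3) and (4), the hypothesized membership rewrites as the existence of a $g\in A_{U_{l^*}\setminus\{s\}}$ with $gA_{U_{l^*}\setminus\{t\}}=\tau A_{U_{l^*}\setminus\{t\}}$, where $s$ and $t$ are endpoints in different indecomposable factors of $\Gamma_\ast$ of the twist path $\tau$ inside $\lambda$; this is forbidden by Lemma~\ref{twistnew}.

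For (5)--(8), two cubes of $C_{\Gamma,\Gamma_0}$ sharing a vertex meet in a subcube of that vertex whose edges are in bijection with their common hyperplanes there. For (5), the edge-type sets $\{v_{i,l}\}_i$ and $\{v_{i,l+1}\}_i$ at $A_{V_0}$ are disjoint (consecutive entries of the $(\Gamma_i)^c$-closed paths differ), so no hyperplane is common. For (6)--(8), a common hyperplane at $A_{U_{l^*}}$ would have type $v_{i,l^*}$ for some $i$ and thus give $H_{i,l^*}=\lambda H_{i,l^*}$, contradicting (2), (3), or (4) respectively. The main obstacle I expect is the twist cases in (3) and (4) when $i$ coincides with one of the twist indices; identifying the correct $s$, $t$, and twist path within $\lambda$ so that Lemma~\ref{twistnew} applies requires careful bookkeeping of the explicit form of $\lambda$.
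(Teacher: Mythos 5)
Your proposal is correct on (1), (2), (5)--(8), and on the substantive twist case of (3), and matches the paper's approach for those. The gap is in your treatment of (4).

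You claim that for (4), as for (3), "the hypothesized membership rewrites as the existence of a $g\in A_{U_{l^*}\setminus\{s\}}$ with $gA_{U_{l^*}\setminus\{t\}}=\tau A_{U_{l^*}\setminus\{t\}}$, where $s$ and $t$ are endpoints in different indecomposable factors of $\Gamma_\ast$," so that Lemma~\ref{twistnew} applies. That is not the shape of the word $\lambda_{l(w_{\mathfrak{j}})}(\mathfrak{j})$. Its ``twist'' part is $w_{\mathfrak{j}}(h)w_{\mathfrak{j}}(h-1)\cdots w_{\mathfrak{j}}(1)w_{\mathfrak{j}}(0)$, whose intermediate letters $w_{\mathfrak{j}}(h-1),\ldots,w_{\mathfrak{j}}(1)$ and, crucially, whose initial letter $w_{\mathfrak{j}}(h)=w_{\mathfrak{j}}$ all lie in $V_0$ (when $h\geq 1$); only the terminal letter $w_{\mathfrak{j}}(0)=v_{i(w_{\mathfrak{j}}),l(w_{\mathfrak{j}})}$ is in $V_\ast$. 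When $h=0$ the ``path'' is a single vertex of $V_\ast$. In neither situation do you have two endpoints $s,t\in V(\Gamma_\ast)$ lying in different indecomposable factors of $\Gamma_\ast$, so the hypotheses of Lemma~\ref{twistnew} are simply not met and your reduction does not produce a valid input to that lemma. The paper instead disposes of (4) by the same mechanism as (2): in the word $\lambda_{l(w_{\mathfrak{j}})}(\mathfrak{j})$, the letter $v_{i,l(w_{\mathfrak{j}})}$ appears exactly once (either in the $v_{1,l^*}\cdots\widehat{v_{i^*,l^*}}\cdots v_{k,l^*}$ prefix when $i\neq i(w_{\mathfrak{j}})$, or as the final letter $w_{\mathfrak{j}}(0)$ when $i=i(w_{\mathfrak{j}})$), while every other letter lies in $U_{l^*}\setminus\{v_{i,l^*}\}$; so $\lambda\in A_{U_{l^*}\setminus\{v_{i,l^*}\}}$ forces $v_{i,l^*}\in A_{U_{l^*}\setminus\{v_{i,l^*}\}}$, contradicting Lemma~\ref{van}(2). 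You already have this argument in hand for (2); the error is in not noticing that (4) is of that type rather than of the Lemma~\ref{twistnew} type.

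A smaller but related point: for (3) you also lump the whole statement into the Lemma~\ref{twistnew} case. The lemma is needed only when $i\in\{i_a,i_{a+1}\}$. When $i\neq i_a,i_{a+1}$, the letter $v_{i,l^*}$ does not appear in $\tau_{i_a,i_{a+1}}$, so again $v_{i,l^*}$ occurs exactly once in $\lambda_{l^*}(i_a,i_{a+1})$ and the contradiction comes from the (2)-type argument, not from Lemma~\ref{twistnew}. You flag at the end that ``identifying the correct $s$, $t$, and twist path'' is the delicate point, and indeed it is; but the resolution is that in (4) (and in the non-twist indices of (3)) there is no valid $s,t$ to identify, and one must fall back on the argument of (2). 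One minor notational slip: you wrote $V(\Gamma_0)\cup V_\ast\subset U_{l^*}$, which is false (the left side is all of $V$); what you mean, and what is true, is that the letters actually occurring in $\lambda$ all lie in $U_{l^*}$.
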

\begin{proof}
(1) $H_{i,l}$ and $H_{i,l+1}$ are of $v_{i,l}$-type and $v_{i,l+1}$-type, respectively. 
Note that $v_{i,l}\neq v_{i,l+1}$ implies $H_{i,l}\neq H_{i,l+1}$. 
Because  $(v_{i,l}, v_{i,l+1})\notin E$, 
we have 
$H_{i,l}\cap H_{i,l+1}=\emptyset$
(see Remark~\ref{label}).

(2) Because $[A_{U_l\setminus\{v_{i,l}\}},A_{U_l}]\subset N(H_{i,l})$, 
we have $[\lambda_l A_{U_l\setminus\{v_{i,l}\}},\lambda_l A_{U_l}]\subset \lambda_l N(H_{i,l})$. 
Note that $\lambda_l A_{U_l}=A_{U_l}$ and $\lambda_l A_{U_l\setminus\{v_{i,l}\}}=v_{1,l}\cdots v_{i,l} A_{U_l\setminus\{v_{i,l}\}}$. 
Assume that $H_{i,l}\cap \lambda_l H_{i,l}\neq \emptyset$. 
Because $H_{i,l}$ and $\lambda_l H_{i,l}$ are of $v_{i,l}$-type, we see that $H_{i,l}= \lambda_l H_{i,l}$ (see Remark~\ref{label}). 
Then, we have 
$$[\lambda_l A_{U_l\setminus\{v_{i,l}\}},\lambda_l A_{U_l}]=[v_{1,l}\cdots v_{i,l} A_{U_l\setminus\{v_{i,l}\}},A_{U_l}] \subset N(H_{i,l}).$$
Hence, we obtain $A_{U_l\setminus\{v_{i,l}\}}=v_{1,l}\cdots v_{i,l} A_{U_l\setminus\{v_{i,l}\}}$. 
Hence, $A_{U_l\setminus\{v_{i,l}\}}=v_{i,l}A_{U_l\setminus\{v_{i,l}\}}$.
This means that $v_{i,l}\in A_{U_l\setminus\{v_{i,l}\}}$. 
However, from Lemma~\ref{van} (2), $\{v_{i,l}\}\cap (U_l\setminus\{v_{i,l}\})=\emptyset$ implies that 
$A_{\{v_{i,l}\}}\cap A_{U_l\setminus\{v_{i,l}\}}=A_\emptyset=\{e\}$. 
This contradicts $v_{i,l}\neq e$ in $A_\Gamma$.

(3) When $i\neq i_a, i_{a+1}$, by the same argument as in (2), we see that 
$$H_{i,l_{\flat}(i_{a},i_{a+1})}\cap \lambda_{\flat}(a)H_{i,l_{\flat}(i_{a},i_{a+1})}=\emptyset.$$
Now, assume that $i=i_a$ or $i=i_{a+1}$ and
$$H_{i,l_{\flat}(i_a,i_{a+1})}\cap \lambda_{\flat}(a)H_{i,l_{\flat}(i_{a},i_{a+1})}\neq \emptyset.$$
Note that 
$\lambda_{\flat}(a)A_{U_{l_{\flat}(i_{a},i_{a+1})}}=A_{U_{l_{\flat}(i_{a},i_{a+1})}}$ 
and
$$\lambda_{\flat}(a)A_{U_{l_{\flat}(i_{a},i_{a+1})}\setminus\{v_{i,l_{\flat}(i_{a},i_{a+1})}\}}=\tau_{i_a,i_{a+1}} A_{U_{l_{\flat}(i_{a},i_{a+1})}\setminus\{v_{i,l_{\flat}(i_{a},i_{a+1})}\}}.$$
Because both $H_{i, l_{\flat}(i_{a},i_{a+1})}$ and $\lambda_{\flat}(a) H_{i,l_{\flat}(i_{a},i_{a+1})}$ 
are of $v_{i,l_{\flat}(i_{a},i_{a+1})}$-type, we see that $H_{i,l_{\flat}(i_{a},i_{a+1})}= \lambda_{\flat}(a)  H_{i,l_{\flat}(i_{a},i_{a+1})}$. 
Then, we have 
\begin{equation*}
\begin{split}
[\lambda_{\flat}(a) A_{U_{l_{\flat}(i_{a},i_{a+1})}\setminus\{v_{i,l_{\flat}(i_{a},i_{a+1})}\}},\lambda_{\flat}(a)  A_{U_{l_{\flat}(i_{a},i_{a+1})}}]\\
=[\tau_{i_a,i_{a+1}} A_{U_{l_{\flat}(i_{a},i_{a+1})}\setminus\{v_{i,{l_{\flat}(i_{a},i_{a+1})}}\}},A_{U_{l_{\flat}(i_{a},i_{a+1})}}] \subset N(H_{i,{l_{\flat}(i_{a},i_{a+1})}}).
\end{split}
\end{equation*}
Hence, we have 
$$\tau_{i_a,i_{a+1}}A_{U_{l_{\flat}(i_a,i_{a+1})}\setminus\{v_{i,l_{\flat}(i_a,i_{a+1})}\}}=A_{U_{l_{\flat}(i_a,i_{a+1})}\setminus\{v_{i,l_{\flat}(i_a,i_{a+1})}\}}.$$
This contradicts Lemma~\ref{twistnew}.

(4) is shown by a similar manner to (2). 

Parts (5), (6), (7) and (8) follow from (1), (2), (3) and (4), respectively.
\end{proof}

The following is a variant for $C_{\Gamma,\Gamma_0}$ of 
\cite[Lemma 6.3]{KO2} for $C_{\Gamma}$, and its proof is the same. 
In fact, Lemma~\ref{twistnew} plays the same role that \cite[Lemma 5.2]{KO2} did in the proof of \cite[Lemma 6.3]{KO2}.
\begin{lemma}\label{key}
For $a\in\{1,\ldots,r\}$, we have 
$$H_{i_a,l_{\flat}(i_a,i_{a+1})}\cap \lambda_{\flat}(a)H_{i_{a+1},l_{\flat}(i_a,i_{a+1})}=\emptyset.$$
\end{lemma}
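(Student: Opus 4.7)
I would argue by contradiction, reducing the intersection hypothesis to a configuration forbidden by Lemma \ref{twistnew}. To lighten notation, write $l = l(i_a, i_{a+1})$, $s = s_{i_a, i_{a+1}}$, $t = t_{i_a, i_{a+1}}$, $\tau = \tau_{i_a, i_{a+1}}$, and $\mu = \lambda_l(i_a, i_{a+1})$. The hyperplane $H_{i_a, l}$ is of $s$-type and is dual to the edge $e_1 := [A_{U_l \setminus \{s\}}, A_{U_l}]$, whereas $\mu H_{i_{a+1}, l}$ is of $t$-type; since $s \in V_{i_a}$ and $t \in V_{i_{a+1}}$ lie in distinct indecomposable factors of $\Gamma_\ast$, these two hyperplanes are \emph{a priori} distinct.

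The first step is to anchor both hyperplanes at the common vertex $A_{U_l}$. Every letter of $\mu$ lies in $U_l$, so $\mu A_{U_l} = A_{U_l}$. When $i_a < i_{a+1}$ (the opposite case is symmetric), $\mu$ factors as $\tau \cdot \prod_{i \neq i_a, i_{a+1}} v_{i, l}$ and the second factor lies in $A_{U_l \setminus \{s, t\}} \subset A_{U_l \setminus \{t\}}$, so $\mu A_{U_l \setminus \{t\}} = \tau A_{U_l \setminus \{t\}}$. Hence $\mu H_{i_{a+1}, l}$ is dual to the edge $e_2 := [A_{U_l}, \tau A_{U_l \setminus \{t\}}]$ emanating from $A_{U_l}$.

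The core geometric step is to upgrade the (hypothetical) global intersection of $H_{i_a, l}$ and $\mu H_{i_{a+1}, l}$ to the existence of a square in $C_{\Gamma,\Gamma_0}$ at $A_{U_l}$ spanned by $e_1$ and $e_2$; equivalently, to the existence of $g \in A_{U_l \setminus \{s\}}$ with $g A_{U_l \setminus \{t\}} = \tau A_{U_l \setminus \{t\}}$. Here I would use that the carriers $N(H_{i_a, l})$ and $N(\mu H_{i_{a+1}, l})$ are convex subcomplexes of $C_{\Gamma,\Gamma_0}$ both containing $A_{U_l}$ (through $e_1$ and $e_2$ respectively); that hyperplanes of the same type never cross (Remark \ref{label}), so no competing $s$- or $t$-type hyperplane can separate $A_{U_l}$ from the intersection locus; and that the link of $A_{U_l}$ in $C_{\Gamma,\Gamma_0}$ is a flag simplicial complex, so the transverse meeting of the two hyperplanes at $A_{U_l}$ forces a square at $A_{U_l}$ with $e_1$ and $e_2$ as adjacent sides. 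Once this local square is produced, Lemma \ref{twistnew} applies directly: the data $s \in V_{i_a}$, $t \in V_{i_{a+1}}$ (in distinct indecomposable factors of $\Gamma_\ast$), the interior path vertices in $V(\Gamma_0)$, and the label-${>}2$ condition on the consecutive edges of $p_{i_a, i_{a+1}}$ are exactly those assumed in the lemma, and it forbids such a square. This is the desired contradiction.

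The main obstacle is the geometric upgrade in the third paragraph, i.e.\ localizing a generic intersection square of the two carriers to a square based at the specific vertex $A_{U_l}$; this is precisely where the convexity of $C_{\Gamma,\Gamma_0}$ in $C_\Gamma$ (Lemma \ref{convex_lem}) and the flag link structure of a CAT(0) cube complex must be combined carefully with the same-type disjointness from Lemma \ref{key0}(2),(3).
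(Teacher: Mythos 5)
Your overall strategy --- suppose the two hyperplanes cross, produce a square at $A_{U_l}$ with $e_1$ and $e_2$ as adjacent sides, and contradict Lemma~\ref{twistnew} --- is precisely the one the paper uses (the paper defers to the analogous \cite[Lemma 6.3]{KO2}, whose proof runs through the analog of Lemma~\ref{twistnew}). Your first two paragraphs, identifying $H_{i_a,l}$ as the $s$-type hyperplane dual to $e_1=[A_{U_l\setminus\{s\}},A_{U_l}]$ and $\mu H_{i_{a+1},l}$ as the $t$-type hyperplane dual to $e_2=[A_{U_l},\tau A_{U_l\setminus\{t\}}]$, are correct and match the computation the paper carries out in the proof of Lemma~\ref{key0}(3).

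The justification you sketch for the localization step, however, has a genuine gap. The flag condition on the link of $A_{U_l}$ is vacuous for a pair of edges: flagness only converts pairwise adjacency of $n+1$ link vertices into an $n$-simplex, so for just two link vertices it says nothing --- whether the link vertices of $e_1$ and $e_2$ are adjacent (equivalently, whether $e_1,e_2$ span a square) is exactly the conclusion you are after, not something you can feed into flagness. The remark that same-type hyperplanes do not cross is also not what drives this step. What actually produces the square is the product structure $N(H_{i_a,l})\cong H_{i_a,l}\times[0,1]$ of the carrier: since $\mu H_{i_{a+1},l}$ crosses $H_{i_a,l}$, it restricts to a hyperplane of the face $H_{i_a,l}\times\{0\}$ that contains $A_{U_l}$, so $e_2$ lies in that face and $e_2\times[0,1]$ is a square of $N(H_{i_a,l})$ containing both $e_1$ and $e_2$. (Equivalently, a median argument works: $A_{U_l\setminus\{s\}}$ and $\tau A_{U_l\setminus\{t\}}$ are separated by exactly these two hyperplanes, and the crossing hypothesis supplies the fourth corner.) This standard fact applies inside $C_{\Gamma,\Gamma_0}$ because it is a convex subcomplex of $C_\Gamma$ (Lemma~\ref{convex_lem}), hence itself CAT(0). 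With the square in hand, it has the form $[gA_{U_l\setminus\{s,t\}},A_{U_l}]$ with $g\in A_{U_l\setminus\{s\}}$ and $gA_{U_l\setminus\{t\}}=\tau A_{U_l\setminus\{t\}}$, which Lemma~\ref{twistnew} forbids. So your proposal follows the paper's route, but the crucial step needs the carrier product (or median/interval) structure rather than the flag link condition.
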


\begin{figure}
\begin{center}
\includegraphics[width=12cm,pagebox=cropbox,clip]{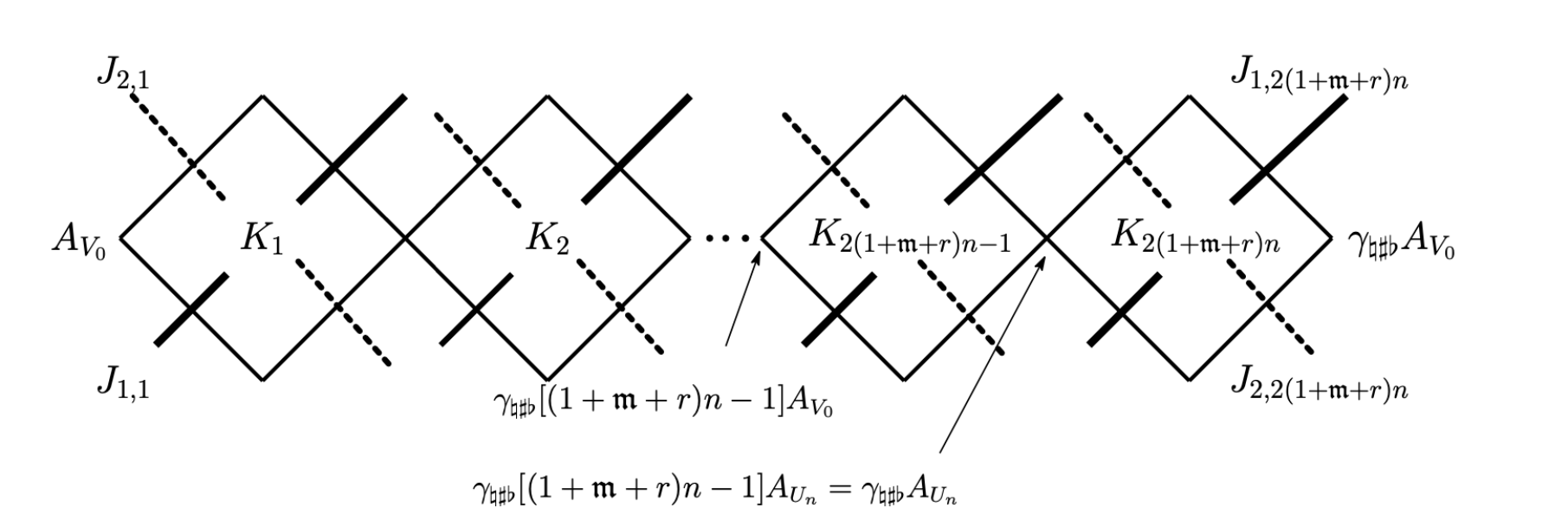}
\caption{All hyperplanes in Lemma~\ref{key3}
for the case where $\Gamma=\Gamma_0\ast\Gamma_1\ast\Gamma_2$. Hyperplanes $J_{1,d}$ and $J_{2,d}$ intersecting $K_d$ are represented as the solid and the dotted lines, respectively.}\label{sequence2_fig}
\end{center}
\end{figure}

Noting Lemma~\ref{key0}, for any $i\in\{1,\ldots,k\}$, we define a sequence of hyperplanes 
$$\ldots,J_{i,-1}, J_{i,0}, J_{i,1},\ldots,J_{i,2(1+\mathfrak{m}+r)n}, J_{i,2(1+\mathfrak{m}+r)n+1}, \ldots,$$ 
and a sequence of $k$-dimensional cubes 
$$\ldots,K_{-1},K_0,K_1,\cdots,K_{2(1+\mathfrak{m}+r)n},K_{2(1+\mathfrak{m}+r)n+1},\ldots$$
as follows.
First, for $a\in\{1,\ldots,1+\mathfrak{m}+r\}$ and $l\in\{1,\dots,n\}$, we define 
\begin{align*}
J_{i,2((a-1)n+l)-1}&:=\gamma_{\natural\sharp\flat}[(a-1)n+l-1]H_{i,l},\\
J_{i,2((a-1)n+l)}&:=\gamma_{\natural\sharp\flat}[(a-1)n+l]H_{i,l},\\
K_{2((a-1)n+l)-1}&:=\gamma_{\natural\sharp\flat}[(a-1)n+l-1][A_{V_0},A_{U_l}]\\
&=[\gamma_{\natural\sharp\flat}[(a-1)n+l-1]A_{V_0},\gamma_{\natural\sharp\flat}[(a-1)n+l-1]A_{U_l}],\\
K_{2((a-1)n+l)}&:=\gamma_{\natural\sharp\flat}[(a-1)n+l][A_{V_0},A_{U_l}]\\
&=[\gamma_{\natural\sharp\flat}[(a-1)n+l]A_{V_0}, \gamma_{\natural\sharp\flat}[(a-1)n+l]A_{U_l}]\\
&=[\gamma_{\natural\sharp\flat}[(a-1)n+l]A_{V_0}, \gamma_{\natural\sharp\flat}[(a-1)n+l-1]A_{U_l}].
\end{align*}
Here, we note that both $J_{i,2((a-1)n+l)-1}$ and $J_{i,2((a-1)n+l)}$ are of the same $v_{i,l}$-type, and that we have 
$\gamma_{\natural\sharp\flat}[(a-1)n+l-1]A_{U_l}=\gamma_{\natural\sharp\flat}[(a-1)n+l]A_{U_l}$.
See Figure~\ref{sequence2_fig}.

Second, for any $b\in\{1,\dots,2(1+\mathfrak{m}+r)n\}$ and $c\in \mathbb{Z}$, we set 
\begin{align*}
J_{i,2(1+\mathfrak{m}+r)nc+b}:={(\gamma_{\natural\sharp\flat})}^cJ_{i,b},\\
K_{2(1+\mathfrak{m}+r)nc+b}:={(\gamma_{\natural\sharp\flat})}^{c}K_{b}.
\end{align*}
Additionally, we have the two connected components 
$J_{i,2(1+\mathfrak{m}+r)nc+b}^-$ and $J_{i,2(1+\mathfrak{m}+r)nc+b}^+$ such that 
$$C_\Gamma \!\setminus\!\!\setminus J_{i,2(1+\mathfrak{m}+r)nc+b}=J_{i,2(1+\mathfrak{m}+r)nc+b}^- \sqcup J_{i,2(1+\mathfrak{m}+r)nc+b}^+,$$
and the ``left endpoint'' of $K_{2(1+\mathfrak{m}+r)nc+b}$ is in $J_{i,2(1+\mathfrak{m}+r)nc+b}^-$ while the ``right endpoint'' of $K_{2(1+\mathfrak{m}+r)nc+b}$ is in $J_{i,2(1+\mathfrak{m}+r)nc+b}^+$. 
Precisely, we have
\begin{align*}
&{(\gamma_{\natural\sharp\flat})}^c \left(\gamma_{\natural\sharp\flat}\left[\frac{b-1}{2}\right]\right)A_{V_0}\in J_{i,2(1+\mathfrak{m}+r)nc+b}^-,\\
&{(\gamma_{\natural\sharp\flat})}^c \left(\gamma_{\natural\sharp\flat}\left[\frac{b-1}{2}\right]\right)A_{U_l}\in J_{i,2(1+\mathfrak{m}+r)nc+b}^+
\end{align*}
if $b=2((a-1)n+l)-1$ with some $a\in\{1,\ldots,1+\mathfrak{m}+r\}$ and $l\in\{1,\dots,n\}$, and we have
\begin{align*}
&{(\gamma_{\natural\sharp\flat})}^c \left(\gamma_{\natural\sharp\flat}\left[\frac{b}{2}\right]\right)A_{U_l}\in J_{i,2(1+\mathfrak{m}+r)nc+b}^-, \\
&{(\gamma_{\natural\sharp\flat})}^c \left(\gamma_{\natural\sharp\flat}\left[\frac{b}{2}\right]\right)A_{V_0}\in J_{i,2(1+\mathfrak{m}+r)nc+b}^+
\end{align*}
if $b=2((a-1)n+l)$ with some $a\in\{1,\ldots,1+\mathfrak{m}+r\}$ and $l\in\{1,\dots,n\}$.
Note that in the former case,
\begin{align*}
K_{2(1+\mathfrak{m}+r)nc+b}&={(\gamma_{\natural\sharp\flat})}^c \left(\gamma_{\natural\sharp\flat}\left[\frac{b-1}{2}\right]\right)[A_{V_0}, A_{U_l}]\\
&=\left[{(\gamma_{\natural\sharp\flat})}^c \left(\gamma_{\natural\sharp\flat}\left[\frac{b-1}{2}\right]\right)A_{V_0}, {(\gamma_{\natural\sharp\flat})}^c \left(\gamma_{\natural\sharp\flat}\left[\frac{b-1}{2}\right]\right)A_{U_l}\right]
\end{align*}
and in the latter case,
\begin{align*}
K_{2(1+\mathfrak{m}+r)nc+b}=\left[{(\gamma_{\natural\sharp\flat})}^c \left(\gamma_{\natural\sharp\flat}\left[\frac{b-2}{2}\right]\right)A_{U_l}, {(\gamma_{\natural\sharp\flat})}^c \left(\gamma_{\natural\sharp\flat}\left[\frac{b}{2}\right]\right)A_{V_0}\right].
\end{align*}
Then, Lemma~\ref{key0} implies that, for any $i\in\{1,\ldots,k\}$, 
$$\cdots\subsetneq J_{i,-1}^-\subsetneq  J_{i,0}^-\subsetneq J_{i,1}^-\subsetneq \cdots \subsetneq J_{i,2(1+\mathfrak{m}+r)n}^-\subsetneq J_{i,2(1+\mathfrak{m}+r)n+1}^-\subsetneq \cdots ,$$
$$\cdots\supsetneq J_{i,-1}^+\supsetneq J_{i,0}^+\supsetneq J_{i,1}^+\supsetneq \cdots \supsetneq J_{i,2(1+\mathfrak{m}+r)n}^+\supsetneq J_{i,2(1+\mathfrak{m}+r)n+1}^+\supsetneq \cdots.$$
Note that 
$$J_{i,0}^-\not\ni A_{V_0}\in J_{i,1}^-,$$
$$J_{i,2(1+\mathfrak{m}+r)n}^+\ni \gamma_{\natural\sharp\flat} A_{V_0} \not\in J_{i,2(1+\mathfrak{m}+r)n+1}^+.$$

The following is a variant for $C_{\Gamma,\Gamma_0}$ of 
\cite[Lemma 6.4]{KO2} for $C_{\Gamma}$. 
\begin{lemma}\label{key3}
The set 
$\{J_{i,d}\}_{i\in\{1,\ldots,k\}, d\in\{1,\ldots,2(1+\mathfrak{m}+r)n\}}$
is the set of all hyperplanes separating $A_{V_0}$ and $\gamma_{\natural\sharp\flat} A_{V_0}$. 
\end{lemma}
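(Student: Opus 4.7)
The plan is to invoke the standard fact that in a CAT(0) cube complex the combinatorial distance between two vertices equals the number of hyperplanes separating them. It therefore suffices to prove two matching bounds: that the listed $J_{i,d}$ with $1\le i\le k$ and $1\le d\le 2(\mathfrak{m}+r)n$ comprise $k\cdot 2(\mathfrak{m}+r)n$ mutually distinct hyperplanes, each of which separates $w_0=A_{V_0}$ from $w_{2(\mathfrak{m}+r)n}=\gamma A_{V_0}$; and conversely that the combinatorial distance between these two vertices is at most $k\cdot 2(\mathfrak{m}+r)n$.

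First I would verify separation and distinctness. The strict chain of half-spaces displayed just before the statement (a direct consequence of Lemma \ref{key0}) gives $w_0\in J_{i,d}^-$ and $w_{2(\mathfrak{m}+r)n}\in J_{i,d}^+$ for every admissible $(i,d)$, so each $J_{i,d}$ does separate them. For distinctness, with $i$ fixed the strict inclusion of half-spaces rules out $J_{i,d}=J_{i,d'}$ for $d\ne d'$; for distinct $i\ne i'$ in $\{1,\dots,k\}$ and any $d,d'$, the hyperplanes $J_{i,d}$ and $J_{i',d'}$ are of $v_{i,l}$-type and $v_{i',l'}$-type respectively, and since $v_{i,l}$ and $v_{i',l'}$ belong to different indecomposable factors $\Gamma_i,\Gamma_{i'}$ of $\Gamma_\ast$, they are distinct vertices of $\Gamma$, so Remark \ref{label} forces $J_{i,d}\ne J_{i',d'}$.

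The second bound, which is the only real work, amounts to producing an edge path of length exactly $k\cdot 2(\mathfrak{m}+r)n$ from $w_0$ to $w_{2(\mathfrak{m}+r)n}$. By construction each cube $K_d$ is an $A_\Gamma$-translate of $[A_{V_0},A_{U_l}]$, a $k$-dimensional cube whose opposite corners $A_{V_0}$ and $A_{U_l}$ are at combinatorial distance $k$ (since $U_l\setminus V_0=\{v_{1,l},\dots,v_{k,l}\}$). If $w_{d-1}$ and $w_d$ are precisely those opposite corners, then they can be joined by $k$ edges inside $K_d$, and concatenating across all $2(\mathfrak{m}+r)n$ cubes yields the desired path. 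The mild obstacle is to check that the twisted factors $\lambda_l(a)$, $\lambda_{l(i_a,i_{a+1})}(i_a,i_{a+1})$ and $\lambda_{l(w_\mathfrak{j})}(\mathfrak{j})$ used in the definition of $\gamma(\cdot)$ each lie in $A_{U_l}$, so that they stabilize $A_{U_l}$ and the endpoints $w_{d-1},w_d$ of $K_d$ really are diagonally opposite corners; this follows by inspection, since the $\tau_{i_a,i_{a+1}}$-letters have endpoints $s_{i_a,i_{a+1}}=v_{i_a,l(i_a,i_{a+1})}$ and $t_{i_a,i_{a+1}}=v_{i_{a+1},l(i_a,i_{a+1})}$ with interior vertices in $V_0$, and the $w_\mathfrak{j}(\cdot)$-letters all lie in $V_0\cup W(0)\subset U_{l(w_\mathfrak{j})}$. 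Combining the two bounds identifies $\{J_{i,d}\}$ with the complete set of separating hyperplanes.
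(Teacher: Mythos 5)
Your proof is correct and takes essentially the same route as the paper: the paper considers the CAT(0) path that diagonally penetrates each cube $K_1,\ldots,K_{2(\mathfrak{m}+r)n}$ and observes that the hyperplanes it crosses are exactly the $J_{i,d}$, which are already known to separate the endpoints; you instead replace the diagonal path by a combinatorial edge path through the same cubes and close the argument with the standard ``distance equals number of separating hyperplanes'' count. The paper also records $K_b=[w_{b-1},w_b]$ explicitly just before the lemma, so your ``inspection'' of the twisted factors is simply re-verifying a fact the paper has already stated.
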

\noindent See Figure~\ref{sequence2_fig}.

\noindent To prove the lemma, we consider a path $\ell$ from $A_{V_0}$ to $\gamma_{\natural\sharp\flat} A_{V_0}$ 
that diagonally penetrates each of the cubes $K_{1},\ldots,K_{2(1+\mathfrak{m}+r)n}$ in order. 
The set of all hyperplanes intersecting the path $\ell$ is 
$\{J_{i,d}\}_{i\in\{1,\ldots,k\}, d\in\{1,\ldots,2(1+\mathfrak{m}+r)n\}}$. 

We now state the final lemma required to complete the proof of Theorem~\ref{main-4}.
Recall Definition~\ref{separatingdef} and Remark~\ref{separatingrem} for sequences of separating hyperplanes. 
The following is a variant for $C_{\Gamma,\Gamma_0}$ of 
\cite[Lemma 6.5]{KO2} for $C_{\Gamma}$. 

\begin{lemma}\label{key4}
\begin{enumerate}
\item For any $i\in\{1,\ldots,k\}$, 
the sequence of hyperplanes $$J_{i,1},\ldots,J_{i,2(1+\mathfrak{m}+r)n}$$ 
is a sequence of separating hyperplanes 
from $J_{i,0}$ to $J_{i,2(1+\mathfrak{m}+r)n+1}$, where $J_{i,0}$ and $J_{i,2(1+\mathfrak{m}+r)n+1}$ 
are of $v_{i,n}$-type and $v_{i,1}$-type, respectively 
(in particular, from $A_{V_0}$ to $\gamma_{\natural\sharp\flat} A_{V_0}$). 
See Figure~\ref{sequence2_fig}.
\item 
\begin{enumerate}
\item[(2-i)]
The sequence of hyperplanes 
\begin{align*}
J_{1,1},J_{1,2},\ldots,J_{1,2n}
\end{align*}
is a sequence of separating hyperplanes from 
$$\gamma_{\natural\sharp\flat}[0]A_{V_0}=A_{V_0} \text{ to } \gamma_{\natural\sharp\flat}[n]A_{V_0}=\gamma_{\natural}A_{V_0}.$$
\item[(2-ii)] The sequence of hyperplanes 
\begin{align*}
J_{1,2n+1},J_{1,2n+2},\ldots,J_{1,2(1+\mathfrak{m})n}
\end{align*}
is a sequence of separating hyperplanes from 
$$\gamma_{\natural\sharp\flat}[n]A_{V_0}=\gamma_{\natural}A_{V_0}\text{ to }\gamma_{\natural\sharp\flat}[(1+\mathfrak{m})n]A_{V_0}=\gamma_{\natural}\gamma_{\sharp}A_{V_0}.$$
\item[(2-iii-$a$)] For any $a\in \{1,\ldots, r\}$, the sequence of hyperplanes 
\begin{align*}
&J_{i_a,2(1+\mathfrak{m})n+2(a-1)n+1},J_{i_a,2(1+\mathfrak{m})n+2((a-1)n+1)},
\ldots,\\
&J_{i_a,2(1+\mathfrak{m})n+2((a-1)n+l_{\flat}(i_a,i_{a+1}))-1},
J_{i_{a+1},2(1+\mathfrak{m})n+2((a-1)n+l_{\flat}(i_a,i_{a+1}))},\\
&J_{i_{a+1},2(1+\mathfrak{m})n+2((a-1)n+l_{\flat}(i_a,i_{a+1}))+1},
\ldots,J_{i_{a+1},2(1+\mathfrak{m}+a)n}
\end{align*}
is a sequence of separating hyperplanes from 
$$\gamma_{\natural\sharp\flat}[(1+\mathfrak{m}+a-1)n]A_{V_0} \text{ to } \gamma_{\natural\sharp\flat}[(1+\mathfrak{m}+a)n]A_{V_0}.$$
See Figure~\ref{sequence_fig}.
\end{enumerate}
\item The sequence of hyperplanes composed of the sequences in (2-i), (2-ii), (2-iii-$1$), (2-iii-$2$), $\ldots$, (2-iii-$r$), in order, 
is a sequence of separating hyperplanes  
from $J_{1,0}$ to $J_{1,2(1+\mathfrak{m}+r)n+1}$, where $J_{1,0}$ and $J_{1,2(1+\mathfrak{m}+r)n+1}$ 
are of $v_{1,n}$-type and $v_{1,1}$-type, respectively 
(in particular, from $A_{V_0}$ to $\gamma_{\natural\sharp\flat} A_{V_0}$). 
Here, note that $i_1=1$. 
Moreover, the sequence contains a hyperplane of $v_i$-type for any $i\in\{1,\ldots,k\}$ and any $v_i\in V_i$. 
\end{enumerate}
\end{lemma}

\begin{figure}
\begin{center}
\includegraphics[width=14cm,pagebox=cropbox,clip]{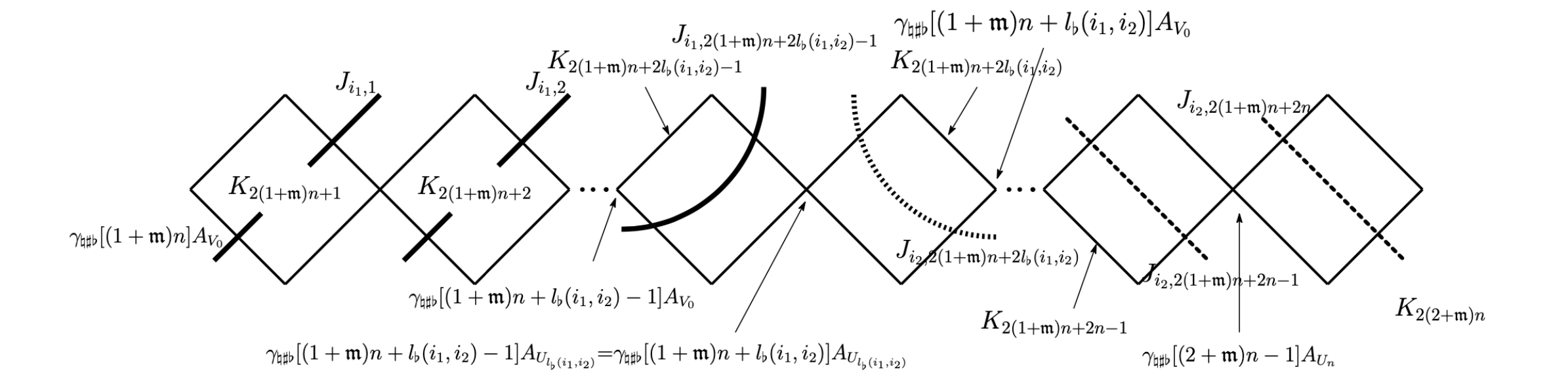}
\caption{The sequence of hyperplanes (2-iii-1) in Lemma~\ref{key4}
for the case where $\Gamma=\Gamma_0\ast\Gamma_1\ast\Gamma_2$. 
The two hyperplanes $J_{i_1,2(1+\mathfrak{m})n+2l_{\flat}(i_1,i_2)-1}$ and $J_{i_2,2(1+\mathfrak{m})n+2l_{\flat}(i_1,i_2)}$ do not intersect each other.
}\label{sequence_fig}
\end{center}
\end{figure}

\begin{proof}
(1) The assertion is clear from parts (1), (2), (3) and (4) of Lemma~\ref{key0}. 

(2) Parts (1), (2), (3) and (4) of Lemma~\ref{key0} and Lemma~\ref{key} imply that 
sequences in (2-i), (2-ii) and (2-iii-$a$) are sequences of separating hyperplanes.

(3) The sequence in the assertion is a sequence of separating hyperplanes.
We need to show that the sequence contains 
a hyperplane of $v_i$-type for any $i\in\{1,\ldots,k\}$ and any $v_i\in V_i$. 
Recall that $(V_{i_1},\dots,V_{i_r},V_{i_{r+1}})$ (\ref{pathtree}) is a closed path on the tree $T$. 
Thus, if an edge $(V_i,V_j)\in E(T)$ is contained in the closed path, then so is the inverse edge $(V_j,V_i)$. 
In addition, because $T$ is a spanning tree of $Q(\Gamma)$ and 
the closed path $(V_{i_1},\dots,V_{i_r},V_{i_{r+1}})$ passes through every vertex at least once, 
for any $i\in \{1,\ldots ,k\}$, $V_i$ is contained in the closed path as a vertex. 
Additionally, recall that $T$ has the root $V_1$ 
and that $i<j$ only if $V_i$ is not farther than $V_j$ from $V_1$ in $T$. 

Now, take any $i\in\{1,\ldots,k\}$. 
We consider the two cases of $i=1$ and $i\neq 1$. 

First, suppose that $i=1$. 
Note that $i_1=1$ and set $j=i_2$. 
Take $a\in \{1,2,\ldots, r\}$ such that $a\neq 1$, $i_a=1$, and $i_{a-1}=j$. 
Then, the sequence in the assertion contains the first half of the sequence of (2-iii-$1$):
\begin{align*}
J_{i_1,2(1+\mathfrak{m})n+1},J_{i_1,2(1+\mathfrak{m})n+2},\ldots,J_{i_1,2(1+\mathfrak{m})n+2l_{\flat}(i_1,i_{2})-1},
\end{align*}
which are of $v_{1,1}$-type, $v_{1,1}$-type, $\ldots$, 
$v_{1,l_{\flat}(1,j)}$-type, and the second half of the sequence of (2-iii-$(a-1)$):
\begin{align*}
&J_{i_{a},2(1+\mathfrak{m})n+2((a-2)n+l_{\flat}(i_{a-1},i_{a}))},J_{i_{a},2(1+\mathfrak{m})n+2((a-2)n+l_{\flat}(i_{a-1},i_{a}))+1},
\ldots,\\
&J_{i_{a},2(1+\mathfrak{m})n+2(a-1)n},
\end{align*}
which are of $v_{1,l_{\flat}(j,1)}$-type, $v_{1,l_{\flat}(j,1)+1}$-type, $\ldots$, 
$v_{1,n}$-type. 
Note that $l_{\flat}(j,1)=l_{\flat}(1,j)$ by Lemma~\ref{closed}. 
Take any vertex $v_1\in V_1$. 
Then, there exists some $l\in\{1,\ldots,n\}$ such that $v_{1,l}=v_1$ by Lemma~\ref{closed}. 
Hence, the sequence  in the assertion contains a hyperplane of $v_1$-type. 

Next, suppose that $i\neq 1$.  
Take the smallest $a\in \{1,\ldots, r\}$ such that $i_a=i$. 
Because $i_1=1$ and $i\neq 1$, we have $a\neq 1$. 
We set $j=i_{a-1}$. 
Then, we have $a'\in \{1,\ldots, r\}$ such that 
$a\le a'$, $i_{a'}=i$, and $i_{a'+1}=j$. 
The sequence in the assertion contains the second half of the sequence of (2-iii-$(a-1)$):
\begin{align*}
&J_{i_{a},2(1+\mathfrak{m})n+2((a-2)n+l_{\flat}(i_{a-1},i_{a}))},J_{i_{a},2(1+\mathfrak{m})n+2((a-2)n+l_{\flat}(i_{a-1},i_{a}))+1},
\ldots,\\
&J_{i_{a},2(1+\mathfrak{m})n+2(a-1)n},
\end{align*}
which are of $v_{i,l_{\flat}(j,i)}$-type, $v_{i,l_{\flat}(j,i)+1}$-type, $\ldots$, 
$v_{i,n}$-type, and the first half of the sequence of (2-iii-$a'$):

\begin{align*}
&J_{i_{a'},2(1+\mathfrak{m})n+2(a'-1)n+1},J_{i_{a'},2(1+\mathfrak{m})n+2((a'-1)n+1)},
\ldots,\\
&J_{i_{a'},2(1+\mathfrak{m})n+2((a'-1)n+l_{\flat}(i_{a'},i_{a'+1}))-1},
\end{align*}
which are of $v_{i,1}$-type, $v_{i,1}$-type, $\ldots$, 
$v_{i,l_{\flat}(j,i)}$-type. 
Note that $l_{\flat}(j,i)=l_{\flat}(i,j)$ by Lemma~\ref{closed}. 
Take any vertex $v_i\in V_i$. 
Then, there exists some $l\in\{1,\ldots,n\}$ such that $v_{i,l}=v_i$ by Lemma~\ref{closed}. 
Hence, the sequence in the assertion contains a hyperplane of $v_i$-type. 
\end{proof}

We can now complete the proof of Theorem~\ref{main-4}. 
\begin{proof}[Proof of $(1)\Rightarrow(2)$ in Theorem~\ref{main-4}]
We show $(1)\Rightarrow(2)$ in Theorem~\ref{main-4}. 
Consider $\gamma_{\natural\sharp\flat}\in A_\Gamma$ defined by (\ref{gammaelement}) 
and hyperplanes 
$$J:=J_{1,0} \text{ and } J':=J_{1,2(1+\mathfrak{m}+r)n+1},$$ 
which are of $v_{1,n}$-type and $v_{1,1}$-type, respectively. 
We now verify conditions (i), (ii) and (iii) in Theorem~\ref{criterion}. 

(i) $\gamma_{\natural\sharp\flat}$ skewers $(J,J')$. Indeed, it is clear that 
$$J^+\supsetneq \gamma_{\natural\sharp\flat}^{-1}(J'^+)\supsetneq\gamma_{\natural\sharp\flat}(J^+)\supsetneq J'^+\supsetneq \gamma_{\natural\sharp\flat}^2(J^+).$$

(ii) We show that $J$ and $J'$ are strongly separated. 
Take any hyperplane $H$ with $J\cap H\neq \emptyset$. 
When $H$ is of $v_i$-type for some $i$ and $v_i\in V_i$, 
take a hyperplane $H'$ of $v_i$-type 
separating $A_{V_0}$ and $\gamma_{\natural\sharp\flat} A_{V_0}$ 
such that $J\cap H'=\emptyset$ by part (3) of Lemma~\ref{key4}. 
Then, $H\cap H'=\emptyset$ by Remark~\ref{label}. 
Because $J\subset H'^-, H'^+\supset J'$,  $J\cap H\neq \emptyset$ and $H\cap H'=\emptyset$, we have 
$J'\cap H=\emptyset$. 

(iii) We show $Stab(J)\cap Stab(J')=\{1\}$. 
We divide the proof into three parts:
(iii-1) we show that 
$Stab(J)\cap Stab(J')\subset A_{V_0}$;
(iii-2) we show that 
$Stab(J)\cap Stab(J')\subset \gamma_{\natural\sharp\flat} A_{V_0}\gamma_{\natural\sharp\flat}^{-1}$;
(iii-3) we show that $Stab(J)\cap Stab(J')=\{1\}$ using (iii-1) and (iii-2).

(iii-1) 
Note that for any $i\in\{1,\ldots,k\}$ and any $v_i\in V_i$, 
we have at least one sequence of separating hyperplanes 
$P'_1,\ldots,P'_{M'}$ from $A_{V_0}$ to $\gamma_{\natural\sharp\flat} A_{V_0}$ 
such that $P'_1$ is of $v_i$-type and $P'_{M'}$ is of $v_{1,n}$-type. 
For example, we can take such a sequence 
by considering a subsequence of 
the sequence in part (3) of Lemma~\ref{key4}. 
For any $i\in\{1,\ldots,k\}$ and any $v_i\in V_i$, 
we can take a longest sequence of separating hyperplanes 
$P_1,\ldots,P_M$ from $A_{V_0}$ to $\gamma_{\natural\sharp\flat} A_{V_0}$ 
such that $P_1$ is of $v_i$-type and $P_M$ is of $v_{1,n}$-type, 
where 
$$A_{V_0} \in P_1^-, P_1^+\supsetneq P_2^+\supsetneq\cdots 
\supsetneq P_{M-1}^+\supsetneq P_M^+\ni \gamma_{\natural\sharp\flat} A_{V_0}$$ 
by taking decompositions into appropriate connected components 
$$C_{\Gamma,\Gamma_0}\!\setminus\!\!\setminus P_1
=P_1^-\sqcup P_1^+,\ldots,C_{\Gamma,\Gamma_0}\!\setminus\!\!\setminus P_M=P_M^-\sqcup P_M^+.$$
Note that $P_1,\ldots,P_M\in \{J_{j,d}\}_{j\in\{1,\ldots,k\}, d\in\{1,\ldots,2(1+\mathfrak{m}+r)n\}}$ 
by Lemma~\ref{key3}. 
By noting $P_M\in \{J_{1,d}\}_{d\in\{1,\ldots,2(1+\mathfrak{m}+r)n\}}$ 
and applying part (1) of Lemma~\ref{key4} for the case $i=1$, 
we have $P_M^+ \supsetneq J'^+$. 

Now assume that $Stab(J)\cap Stab(J')\not\subset A_{V_0}$. 
Take $g\in Stab(J)\cap Stab(J')$ with $g\not\in A_{V_0}$.
Note that $g^{-1}\in Stab(J)\cap Stab(J')$. 
Then, we have a hyperplane $H$ of $v_i$-type for some $i\in\{1,\ldots,k\}$ 
and some $v_i\in V_i$, 
separating $A_{V_0}$ and $gA_{V_0}$. 
We take a longest sequence of separating hyperplanes 
$P_1,\ldots,P_M$ from $A_{V_0}$ to $\gamma_{\natural\sharp\flat} A_{V_0}$ 
such that $P_1$ is of $v_i$-type and $P_M$ is of $v_{1,n}$-type. 
Then we have 
$$g\gamma_{\natural\sharp\flat} A_{V_0}, g^{-1}\gamma_{\natural\sharp\flat} A_{V_0}\in P_M^+$$
by $P_M^+ \supsetneq J'^+$. 
We take a connected component 
$H^-$ of $C_{\Gamma,\Gamma_0}\!\setminus\!\!\setminus H$ such that $A_{V_0}\in H^-$. 
Then the other connected component $H^+$ satisfies $gA_{V_0}\in H^+$. 
Because $H\cap J\neq \emptyset$, we have $H\cap J'=\emptyset$ by (ii). 
Thus, $H$ cannot separate 
$\gamma_{\natural\sharp\flat} A_{V_0}$ and $g\gamma_{\natural\sharp\flat} A_{V_0}$. 
Hence we have either 

\noindent (iii-1-$a$) 
$A_{V_0}, \gamma_{\natural\sharp\flat} A_{V_0},g \gamma_{\natural\sharp\flat} A_{V_0} \in H^-  \text{ and } gA_{V_0} \in H^+$ or 

\noindent (iii-1-$b$)
$A_{V_0}\in H^- \text{ and } gA_{V_0}, \gamma_{\natural\sharp\flat} A_{V_0},g \gamma_{\natural\sharp\flat} A_{V_0} \in H^+$.

Assume that case (iii-1-$a$) occurs. Then $H$ does not separate 
$A_{V_0}$ and $\gamma_{\natural\sharp\flat} A_{V_0}$ and thus 
$H\notin\{J_{i,d}\}_{d\in\{1,\ldots,2(1+\mathfrak{m}+r)n\}}$ by Lemma~\ref{key3}. 
Hence, $H$ and $P_1$ are different hyperplanes of the same $v_i$-type, and thus 
they cannot intersect by Remark~\ref{label}. 
Then we have either $H^-\supsetneq P_1^+$ or $H^+\supsetneq P_1^+$. 
However, $H^+\supsetneq P_1^+$ cannot occur because 
$\gamma_{\natural\sharp\flat} A_{V_0}\notin H^+$ and $\gamma_{\natural\sharp\flat} A_{V_0} \in P_1^+$. 
Therefore, 
$$H^-\supsetneq P_1^+.$$ 
Then we have 
$$gA_{V_0} \in H^+, H^-\supsetneq P_1^+\supsetneq P_2^+\supsetneq\cdots 
\supsetneq P_{M-1}^+\supsetneq P_M^+\ni g\gamma_{\natural\sharp\flat} A_{V_0}.$$ 
Then, $Q_1=H, Q_2=P_1,\ldots,Q_{M+1}=P_M$ is a sequence of separating hyperplanes 
from $gA_{V_0}$ to $g\gamma_{\natural\sharp\flat} A_{V_0}$ 
such that $Q_1$ is of $v_i$-type and $Q_{M+1}$ is of $v_{1,n}$-type. 
Thus, $g^{-1}Q_1, g^{-1}Q_2,\ldots,g^{-1}Q_{M+1}$ is a sequence of separating hyperplanes 
from $A_{V_0}$ to $\gamma_{\natural\sharp\flat} A_{V_0}$ 
such that $g^{-1}Q_1$ is of $v_i$-type and $g^{-1}Q_{M+1}$ is of $v_{1,n}$-type, 
which contradicts the fact that the sequence $P_1,\ldots,P_M$ is the longest. 

Next, assume that the case (iii-1-$b$) occurs. Then $H$ does not separate 
$gA_{V_0}$ and $g\gamma_{\natural\sharp\flat} A_{V_0}$; that is, 
$g^{-1}H$ does not separate 
$A_{V_0}$ and $\gamma_{\natural\sharp\flat} A_{V_0}$ and thus 
$g^{-1}H\notin\{J_{i,d}\}$ $({d\in\{1,\ldots,2(1+\mathfrak{m}+r)n\}})$ by Lemma~\ref{key3}. 
Hence, $g^{-1}H$ and $P_1$ are different hyperplanes of the same $v_i$-type, and thus 
they cannot intersect by Remark~\ref{label}. 
Then we have either $g^{-1}H^+\supsetneq P_1^+$ or $g^{-1}H^-\supsetneq P_1^+$. 
However $g^{-1}H^-\supsetneq P_1^+$ can not occur because 
$\gamma_{\natural\sharp\flat} A_{V_0}\notin g^{-1}H^-$ and $\gamma_{\natural\sharp\flat} A_{V_0} \in P_1^+$. 
Therefore 
$$g^{-1}H^+\supsetneq P_1^+.$$ 
Then we have 
$$g^{-1}A_{V_0} \in g^{-1}H^-, g^{-1}H^+\supsetneq P_1^+\supsetneq P_2^+\supsetneq\cdots 
\supsetneq P_{M-1}^+\supsetneq P_M^+\ni g^{-1}\gamma_{\natural\sharp\flat} A_{V_0}.$$ 
Then, $Q_1=H, Q_2=gP_1,\ldots,Q_{M+1}=gP_M$ is a sequence of separating hyperplanes 
from $A_{V_0}$ to $\gamma_{\natural\sharp\flat} A_{V_0}$ 
such that $Q_1$ is of $v_i$-type and $Q_{M+1}$ is of $v_{1,n}$-type, 
which contradicts the fact that the sequence $P_1,\ldots,P_M$ is the longest. 

We now see that $Stab(J)\cap Stab(J')\subset A_{V_0}$. 

(iii-2) We show $Stab(J)\cap Stab(J')\subset \gamma_{\natural\sharp\flat} A_{V_0}\gamma_{\natural\sharp\flat}^{-1}$.
This can be shown in a similar way as (iii-1).
Indeed we reach a contradiction by assuming $Stab(J)\cap Stab(J')\not\subset \gamma_{\natural\sharp\flat} A_{V_0}\gamma_{\natural\sharp\flat}^{-1}$.

(iii-3) We show that $Stab(J)\cap Stab(J')=\{1\}$. 
Take $g\in Stab(J)\cap Stab(J')$.
Then, by (iii-1) and (iii-2), we have $g\in A_{V_0}\cap \gamma_{\natural\sharp\flat} A_{V_0}\gamma_{\natural\sharp\flat}^{-1}$. 
Let $\ell$ be the geodesic from  $A_{V_0}$ to $\gamma_{\natural\sharp\flat} A_{V_0}$, 
which diagonally penetrates each of the cubes $K_{1},\ldots,K_{2(1+\mathfrak{m}+r)n}$ in order. 
We note that the set of all hyperplanes intersecting the geodesic $\ell$ is 
$\{J_{i,d}\}_{i\in\{1,\ldots,k\}, d\in\{1,\ldots,2(1+\mathfrak{m}+r)n\}}$. 
Since $g$ fixes $A_{V_0}$ and $\gamma_{\natural\sharp\flat} A_{V_0}$, $g$ fixes the geodesic $\ell$ and thus all vertices on the geodesic $\ell$.
Take any $w\in V_0$. 
Recall that $V_0=W(1)\sqcup \cdots \sqcup W(\mathfrak{h})$, where $W(h)$ $(1\leq h\leq \mathfrak{h})$ is defined in $(\ref{W(h)})$.
Let $h\in \{1,\ldots,\mathfrak{h}\}$ be the height of $w$; that is, the number satisfying $w\in W(h)$. 

Suppose that $g\in A_{V_0\setminus\{w'\in V_0\mid w'\prec w\}}=A_{\{w''\in V_0\mid w''\succeq w\}}$. 
We show that $g\in A_{\{w''\in V_0\mid w''\succ w\}}$. 

\begin{figure}
\begin{center}
\includegraphics[width=12cm,pagebox=cropbox,clip]{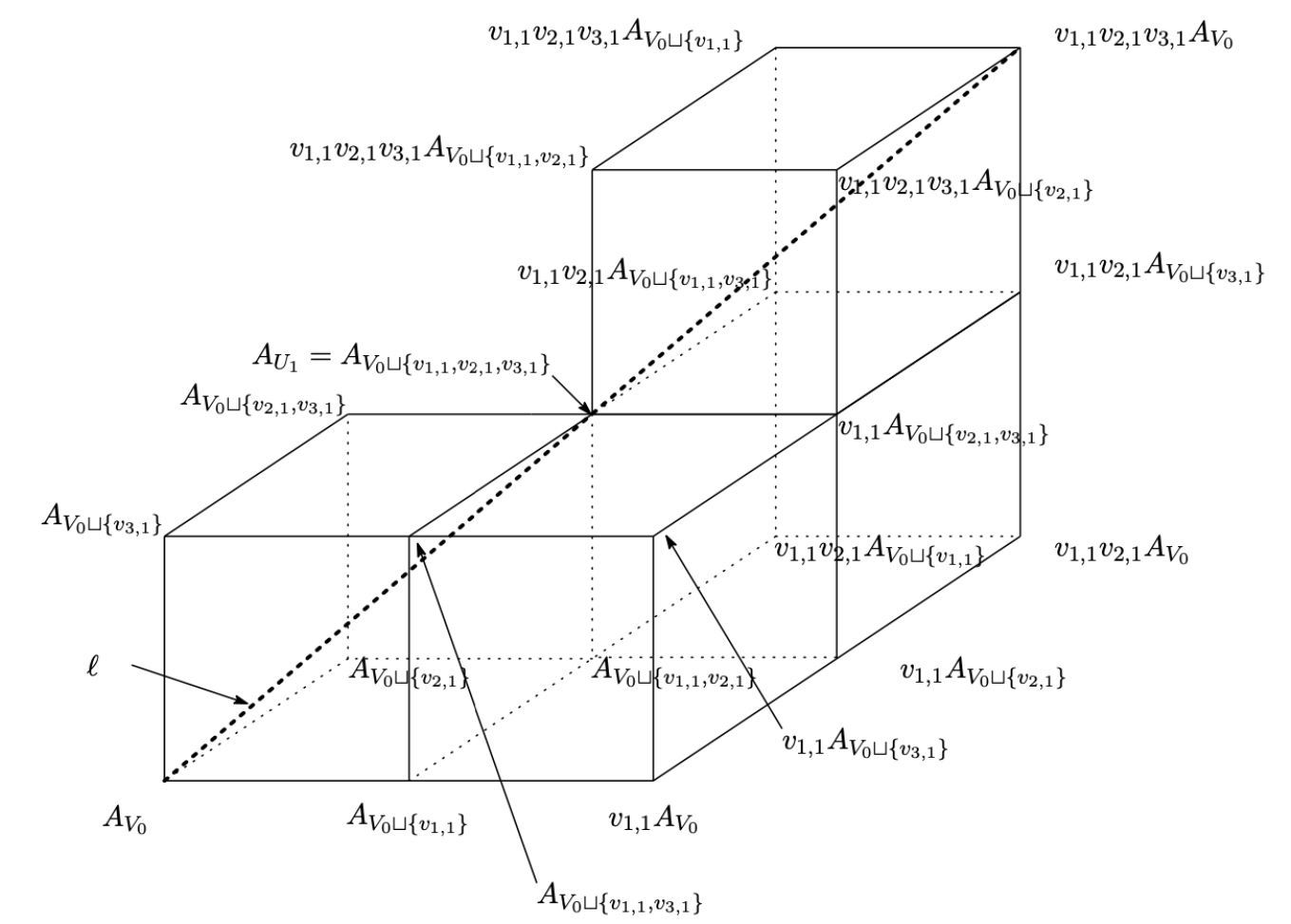}
\caption{Cubes penetrated by $\ell$ when $k=3$ and $l_{\sharp}(\mathfrak{j})=1$. 
In this case, $g$ fixes cubes $[A_{V_0}, A_{U_1}]$ and $[v_{1,1}v_{2,1}v_{3,1}A_{V_0}, A_{U_1}]$, which are penetrated by $\ell$.
Then $g$ fixes three edges $[A_{V_0\sqcup\{v_{1,1},v_{2,1}\}}, A_{U_1}]$, $[A_{V_0\sqcup \{v_{1,1},v_{3,1}\}}, A_{U_1}]$ and $[v_{1,1}A_{V_0\sqcup\{v_{2,1},v_{3,1}\}}, A_{U_1}]$,
and thus three squares $[A_{V_0\sqcup\{v_{1,1}\}}, A_{U_1}]$, $[v_{1,1}A_{V_0\sqcup\{v_{2,1}\}}, A_{U_1}]$ and $[v_{1,1}A_{V_0\sqcup\{v_{3,1}\}}, A_{U_1}]$.
Hence $g$ fixes the cube $[v_{1,1}A_{V_0}, A_{U_1}]$.
By a similar argument, $g$ fixes the cube $[v_{1,1}v_{2,1}A_{V_0}, A_{U_1}]$. 
Therefore, $g$ fixes not only $A_{V_0}$ and $v_{1,1}v_{2,1}v_{3,1}A_{V_0}$, but also $v_{1,1}A_{V_0}$ and $v_{1,1}v_{2,1}A_{V_0}$.}\label{iii-3_fig}
\end{center}
\end{figure}

(iii-3-$a$) We consider the case where $h=1$; that is, $w\in W(1)$. 
Then we have $v_{i_{\sharp}(w),l_{\sharp}(w)}\in V_{\ast}$. 
We consider a vertex on $\ell$ of the form
\begin{equation}\label{intermediatevertex_a}
\lambda_1\cdots\lambda_{l_{\sharp}(w)-1}
\lambda_{l_{\sharp}(w)}A_{V_0},
\end{equation}
which is fixed by $g$.
By definition, in $(\ref{intermediatevertex_a})$,
$$\lambda_{l_{\sharp}(w)}=v_{1,l_{\sharp}(w)}\cdots v_{i_{\sharp}(w),l_{\sharp}(w)}\cdots v_{k,l_{\sharp}(w)}.$$
Since the action of $A_{\Gamma}$ on $C_{\Gamma,\Gamma_0}$ preserves the cube structure, all cubes penetrated by $\ell$ are fixed by $g$.
Therefore, $g$ fixes 
$$\lambda_1\cdots\lambda_{l_{\sharp}(w)-1}v_{1,l_{\sharp}(w)}\cdots v_{i_{\sharp}(w),l_{\sharp}(w)}A_{V_0},$$ 
see Figure~\ref{iii-3_fig}.
Then 
$$g\in (\lambda_1\cdots\lambda_{l_{\sharp}(w)-1}v_{1,l_{\sharp}(w)}\cdots v_{i_{\sharp}(w),l_{\sharp}(w)})A_{V_0}(\lambda_1\cdots\lambda_{l_{\sharp}(w)-1}v_{1,l_{\sharp}(w)}\cdots v_{i_{\sharp}(w),l_{\sharp}(w)})^{-1}.$$
By $g\in A_{\{w''\in V_0\mid w''\succeq w\}}$ and Lemma~\ref{sharp_lem} (1), $\lambda_1\cdots \lambda_{l_{\sharp}(w)-1}v_{1,l_{\sharp}(w)}\cdots v_{i_{\sharp}(w)-1,l_{\sharp}(w)}$ commutes with $g$, see Figure~\ref{W_fig}.
Therefore, $g\in v_{i_{\sharp}(w),l_{\sharp}(w)}A_{V_0}v_{i_{\sharp}(w),l_{\sharp}(w)}^{-1}$, and thus
$g\in A_{V_0}\cap v_{i_{\sharp}(w),l_{\sharp}(w)}A_{V_0}v_{i_{\sharp}(w),l_{\sharp}(w)}^{-1}$.
According to Lemma~\ref{Godelle},
$A_{V_0}\cap v_{i_{\sharp}(w),l_{\sharp}(w)}A_{V_0}v_{i_{\sharp}(w),l_{\sharp}(w)}^{-1}\subset A_{V_0\setminus \{w\}}$.
Hence 
$$g\in A_{\{w''\in V_0\mid w''\succeq w\}}\cap A_{V_0\setminus\{w\}}=A_{\{w''\in V_0\mid w''\succ w\}}.$$

(iii-3-$b$) We consider the case where $w\in W(h)$ where $h\geq 2$. 
Then we have 
$$(w(h), w(h-1),\ldots w(0))=(w, \pi(w),\ldots \pi^h(w))$$
in $(\ref{w(h)_path})$.
Then we have $\mathfrak{j}\in \{1,\ldots, \mathfrak{m}\}$ such that 
$w_{\mathfrak{j}}=w(h-1)=\pi(w)\in W(h-1)$.
We consider a vertex on $\ell$ of the form
\begin{equation}\label{intermediatevertex_b}
\gamma_{\natural}\gamma_\sharp(1)\cdots\gamma_\sharp(\mathfrak{j}-1)
\lambda_1\cdots\lambda_{l_{\sharp}(w_{\mathfrak{j}})-1}
\lambda_{\sharp}(\mathfrak{j})A_{V_0},
\end{equation}
which is fixed by $g$.
We note that 
$\lambda_{\sharp}(\mathfrak{j})$ equals 
\begin{align*}
&v_{1,l_{\sharp}(w_{\mathfrak{j}})}\cdots  v_{i_{\sharp}(w_{\mathfrak{j}})-1,l_{\sharp}(w_{\mathfrak{j}})}
v_{i_{\sharp}(w_{\mathfrak{j}})+1,l_{\sharp}(w_{\mathfrak{j}})}\cdots v_{k,l_{\sharp}(w_{\mathfrak{j}})}w_{\mathfrak{j}}(h-1)\cdots w_{\mathfrak{j}}(1)w_{\mathfrak{j}}(0)\\
&=v_{1,l_{\sharp}(w_{\mathfrak{j}})}\cdots  v_{i_{\sharp}(w_{\mathfrak{j}})-1,l_{\sharp}(w_{\mathfrak{j}})}
v_{i_{\sharp}(w_{\mathfrak{j}})+1,l_{\sharp}(w_{\mathfrak{j}})}\cdots v_{k,l_{\sharp}(w_{\mathfrak{j}})}w(h-1)\cdots w(1)w(0).
\end{align*}
Hence 
\begin{align*}
&\gamma_{\natural}\gamma_{\sharp}(1)\cdots \gamma_{\sharp}(\mathfrak{j}-1)\lambda_1\cdots \lambda_{l_{\sharp}(w_{\mathfrak{j}})-1}\lambda_{\sharp}(\mathfrak{j})\\
=&\left(\gamma_{\natural}\gamma_{\sharp}(1)\cdots \gamma_{\sharp}(\mathfrak{j}-1)\lambda_1\cdots \lambda_{l_{\sharp}(w_{\mathfrak{j}})-1}\right)\cdot \\
&\left(v_{1,l_{\sharp}(w_{\mathfrak{j}})}\cdots  v_{i_{\sharp}(w_{\mathfrak{j}})-1,l_{\sharp}(w_{\mathfrak{j}})}
v_{i_{\sharp}(w_{\mathfrak{j}})+1,l_{\sharp}(w_{\mathfrak{j}})}\cdots v_{k,l_{\sharp}(w_{\mathfrak{j}})}w(h-1)\cdots w(1)w(0)\right).
\end{align*}
Note that  
$\gamma_{\natural}\gamma_{\sharp}(1)\cdots \gamma_{\sharp}(\mathfrak{j}-1)$
is a product of elements of $V_{\ast}\sqcup \{w_1,\ldots w_{\mathfrak{j}-1}\}$.
By Lemma~\ref{sharp_lem} (2) with $w_1\prec \cdots \prec w_{\mathfrak{j}-1}\prec w_{\mathfrak{j}}=\pi(w)$, 
any $w''\succeq w$ commutes with 
$$\gamma_{\natural}\gamma_\sharp(1)\cdots\gamma_\sharp(\mathfrak{j}-1)
\lambda_1\cdots\lambda_{l_{\sharp}(w_{\mathfrak{j}})-1}v_{1,l_{\sharp}(w_{\mathfrak{j}})}\cdots  v_{i_{\sharp}(w_{\mathfrak{j}})-1,l_{\sharp}(w_{\mathfrak{j}})}
v_{i_{\sharp}(w_{\mathfrak{j}})+1,l_{\sharp}(w_{\mathfrak{j}})}\cdots v_{k,l_{\sharp}(w_{\mathfrak{j}})}. $$
Hence so does $g$.
Then $g\in (w(h-1)\cdots w(0))
A_{V_0}(w(h-1)\cdots w(0))^{-1}$.
Next, by noting $w(0)\prec w(1) \prec\cdots \prec w(h-1) \prec w(h)=w$, 
we can show $g\in A_{\{w''\in V_0\mid w''\succ w\}}$.
Indeed, it follows from Lemma~\ref{Godelle} that 
$g$ belongs to 
\begin{align*}
&A_{\{w''\in V_0\mid w''\succeq w\}}
\\
&\cap(w(h-1)\cdots w(0))
A_{V_0}(w(h-1)\cdots w(0))^{-1}
\\
=&A_{\{w''\in V_0\mid w''\succeq w\}}\cap A_{V_0}
\\
&\cap(w(h-1)\cdots w(1)w(0))
A_{V_0}(w(h-1)\cdots w(1)w(0))^{-1}
\\
=&A_{\{w''\in V_0\mid w''\succeq w\}}
\cap (w(h-1)\cdots w(1))A_{V_0}(w(h-1)\cdots w(1))^{-1}
\\
&\cap(w(h-1)\cdots w(1)w(0))
A_{V_0}(w(h-1)\cdots w(1)w(0))^{-1}
\\
= &A_{\{w''\in V_0\mid w''\succeq w\}}
\\
&\cap
(w(h-1)\cdots w(1))(A_{V_0}\cap w(0)A_{V_0}w(0)^{-1})
(w(h-1)\cdots w(1))^{-1}
\\
\subset&A_{\{w''\in V_0\mid w''\succeq w\}}
\\
&\cap
(w(h-1)\cdots w(1))
A_{V_0\setminus\{w(1)\}}(w(h-1)\cdots w(1))^{-1}
\\
=&A_{\{w''\in V_0\mid w''\succeq w\}}
\cap A_{V_0\setminus\{w(1)\}}
\\
&\cap
(w(h-1)\cdots w(2)w(1))
A_{V_0\setminus\{w(1)\}}(w(h-1)\cdots w(2)w(1))^{-1}
\\
=&A_{\{w''\in V_0\mid w''\succeq w\}}
\cap (w(h-1)\cdots w(2))A_{V_0\setminus\{w(1)\}}(w(h-1)\cdots w(2))^{-1}
\\
&\cap
(w(h-1)\cdots w(2)w(1))
A_{V_0\setminus\{w(1)\}}(w(h-1)\cdots w(2)w(1))^{-1}
\\
=&A_{\{w''\in V_0\mid w''\succeq w\}}\\
&\cap
(w(h-1)\cdots w(2))
(A_{V_0\setminus\{w(1)\}}\cap w(1)A_{V_0\setminus\{w(1)\}}
w(1)^{-1})(w(h-1)\cdots w(2))^{-1}
\\
\subset&A_{\{w''\in V_0\mid w''\succeq w\}}
\\
&\cap
(w(h-1)\cdots w(2))
A_{V_0\setminus\{w(1),w(2)\}}
(w(h-1)\cdots w(2))^{-1}
\\
&\cdots
\\
\subset&A_{\{w''\in V_0\mid w''\succeq w\}}
\\
&\cap
w(h-1)
A_{V_0\setminus\{w(1),\ldots,w(h-1)\}}
w(h-1)^{-1}
\\
=&A_{\{w''\in V_0\mid w''\succeq w\}}\\
&\cap
(A_{V_0\setminus\{w(1),\ldots,w(h-1)\}}\cap w(h-1)A_{V_0\setminus\{w(1),\ldots,w(h-1)\}}w(h-1)^{-1})
\\
\subset&A_{\{w''\in V_0\mid w''\succeq w\}}
\\
&\cap
A_{V_0\setminus\{w(1),\ldots, w(h-1),w(h)\}}
\\
=&A_{\{w''\in V_0\mid w''\succ w\}}. 
\end{align*} 
In the above deformation, all $\subset$ are consequences of Lemma~\ref{Godelle},
and the final equality follows from $w(h)=w$.
The other equalities are trivial deformations.

Since $\prec$ is a total order on a finite set $V_0$,
by applying (iii-3-$a$) or (iii-3-$b$) for $w\in V_0$ in order from the smallest element of $V_0$ to the largest one, we have $g\in A_{\emptyset}=\{e\}$ and thus $g=e$.
\end{proof}

\begin{proof}[Proof of Corollaries~\ref{main2} and \ref{main3}]
Let $A_\Gamma$ be an irreducible Artin group associated with $\Gamma$ that is not a clique. 
Then, Theorem~\ref{main} implies that $A_\Gamma$ is acylindrically hyperbolic. 
Therefore, the center of $A_\Gamma$ is finite by acylindrical hyperbolicity (\cite[Corollary 7.3]{Osin}). 
Proposition~\ref{finite normal subgroup} then implies Corollary~\ref{main2}. 
We can show Corollary~\ref{main3} in a similar way.  
\end{proof}

\begin{remark}\label{center_rem}
There are several versions of the so-called center conjecture.
Note that the center of any irreducible Artin group of spherical type is known to be infinite cyclic \cite{BS}, \cite{MR422673}.
Then, as one version, the center conjecture containing this known fact can be stated as follows:
the center of any irreducible Artin group of spherical type and of infinite type
is infinite cyclic and trivial, respectively.
Since the center of any possibly reducible Artin group with each irreducible factor satisfying
the center conjecture is trivial or torsion-free,
the last part of Corollary~\ref{main2} clearly implies the following:
in the setting of Corollary~\ref{main2},
if each irreducible factor of $A_{\Gamma_0}$ satisfies the center conjecture, then
$A_\Gamma$ satisfies the center conjecture.
This claim is independently given in \cite{JM} by a different method.
\end{remark}

\section{Example}\label{example_sec}
Let $\Gamma$ be the graph with $V(\Gamma)=\{x,y,u_1,u_2,v_1,v_2\}$ as shown in Figure~\ref{example_fig}.
In this section, we explain the construction of $\gamma_{\natural\sharp\flat}$ for this example.

\begin{remark}
Theorems~\ref{main} and \ref{main-4} can be applied to Artin groups associated with $\Gamma$ in Figures~\ref{new_ex_fig} and \ref{new_ex_fig2}.
However, these examples may be too simple to illustrate the construction of $\gamma_{\natural\sharp\flat}$.
Indeed, in the leftmost defining graph in Figure~\ref{new_ex_fig}, $\gamma_{\flat}=e\in A_{\Gamma}$. 
For the middle and right $\Gamma$, $\gamma_{\sharp}=\gamma_{\flat}=e\in A_{\Gamma}$.
Also, for all $\Gamma$ in Figure~\ref{new_ex_fig2},
$\gamma_{\sharp}=e\in A_{\Gamma}$.
The graph $\Gamma$ in Figure~\ref{example_fig} is one of the smallest that has non-trivial $\gamma_{\sharp}$ and $\gamma_{\flat}$.
\end{remark}

We note that $\Gamma$ is irreducible; that is, $\Gamma^t$ is connected.
Then we have the join decomposition $\Gamma=\Gamma_0\ast \Gamma_{\ast}=\Gamma_0\ast(\Gamma_1\ast \Gamma_2)$,
where we set subgraphs $\Gamma_i$ $(i=0,1,2)$ of $\Gamma$ by
\begin{align}
&V_0:=V(\Gamma_0)=\{x,y\},\ E_0:=E(\Gamma_0)=\{(x,y), \ (y,x)\},\\
&V_1:=V(\Gamma_1)=\{u_1,u_2\},\ E_1:=E(\Gamma_1)=\emptyset,\\
&V_2:=V(\Gamma_2)=\{v_{1},v_{2}\},\ E_2:=E(\Gamma_2)=\emptyset.
\end{align}
Clearly, $\Gamma_0$ is the maximal clique factor, and
$\Gamma_1$ and $\Gamma_2$ are join-indecomposable factors. 
Note that $k=2$.
We give an explicit construction of $\gamma_{\natural\sharp\flat}=\gamma_{\natural}\gamma_{\sharp}\gamma_{\flat}$ for this $\Gamma$ following Section~\ref{main_proof}.

First, we construct $\gamma_{\natural}$.
In our $\Gamma$, $Q(\Gamma,\Gamma_0)$ is the tree consisting of one edge and two vertices $V_1$ and $V_2$ (see (\ref{QGamma})).
Here, the spanning tree $T$ coincides with $Q(\Gamma, \Gamma_0)$.
We choose $V_1$ as the root of $T$.
Take paths $p_{1,2}$ and $p_{2,1}$ in $\Gamma^t$ as follows:
\begin{align}
p_{1,2}=(w_{1,2,0},w_{1,2,1},w_{1,2,2})=(u_{1},x,v_{1}),\\
p_{2,1}=(w_{2,1,0},w_{2,1,1},w_{2,1,2})=(v_{1},x,u_{1})
\end{align}
(see (\ref{p_ij}) and (\ref{p_ji})).
Their end vertices are
\begin{align}
s_{1,2}=w_{1,2,0}=u_{1}\in V_1,\ t_{1,2}=w_{1,2,2}=v_{1}\in V_2,\\
s_{2,1}=w_{2,1,0}=v_{1}\in V_2,\ t_{2,1}=w_{2,1,2}=u_{1}\in V_1,
\end{align}
respectively (see (\ref{st_ij})).
Set $\tau_{1,2}, \tau_{2,1}\in A_\Gamma$ as
\begin{align}\label{tau_ex}
\tau_{1,2}=w_{1,2,0}w_{1,2,1}w_{1,2,2}=u_{1}xv_{1},\\
\tau_{2,1}=w_{2,1,0}w_{2,1,1}w_{2,1,2}=v_{1}xu_{1}
\end{align}
(see (\ref{tau_ij})).
We fix a closed path on $T=Q(\Gamma,\Gamma_0)$;
\begin{align}
(V_{i_1},V_{i_2}, V_{i_3})=(V_1, V_2, V_1)
\end{align}
(see (\ref{pathtree})).
We note that the length $r$ of the path is $2$.

According to Lemma~\ref{closed}, we construct $n\in \mathbb{N}$, $M_{\natural}\in \mathcal{M}(2,n;V(\Gamma))$ and a map $l_{\flat}$.
Take closed paths of length $n=2$ on $\Gamma_1^c$, $\Gamma_2^c$:
\begin{align}
(u_1,u_2,u_1),\ (v_1,v_2,v_1), respectively.
\end{align}

Set 
\begin{align}
M_{\natural}=
\begin{pmatrix}
u_1&u_2\\
v_1&v_2
\end{pmatrix},
\end{align}
and
\begin{align}\label{l_natural_ex}
l_{\flat}(1,2)=l_{\flat}(2,1)=1.
\end{align}
Then, 
\begin{align}
M_{\natural}[1,l_{\flat}(1,2)]=u_1,\ 
M_{\natural}[2,l_{\flat}(2,1)]=v_{1}.
\end{align}
We have 
\begin{align}\label{gammaelement^0_ex}
\gamma_{\natural}=u_1v_1u_2v_2,
\end{align}
see $(\ref{gammaelement^0})$.

Next, with $M_{\natural}\in \mathcal{M}(2,2;V(\Gamma))$ and $l_{\flat}$,
we give an explicit construction of $\gamma_\flat$. 
With $(\ref{l_natural_ex})$, $M'_{\flat}(a)\in \mathcal{M}(2,2;A_{\Gamma})$ $(a=1,2)$ are
\begin{equation}\label{M_natural_a_1_ex}
M'_{\flat}(1)=M'_{\flat}(2)=
\begin{pmatrix}
u_1^{-1}& e\\
v_1^{-1} & e
\end{pmatrix},
\end{equation}
see $(\ref{M_natural_a_1})$.
Using $(\ref{tau_ex})$, supplementary matrices $M''_{\flat}(a)\in \mathcal{M}(2,2;A_{\Gamma})$ $(a=1,2)$ are
\begin{equation}\label{M_natural_1_2_ex}
M''_{\flat}(1)=
\begin{pmatrix}
\tau_{1,2}& e\\
e & e
\end{pmatrix}
=
\begin{pmatrix}
u_1xv_1& e\\
e & e
\end{pmatrix}
\end{equation}
and 
\begin{equation}\label{M_natural_2_2_ex}
M''_{\flat}(2)=
\begin{pmatrix}
\tau_{2,1}& e\\
e & e
\end{pmatrix}
=
\begin{pmatrix}
v_1xu_1& e\\
e & e
\end{pmatrix},
\end{equation}
see $(\ref{M_natural_a_2})$.
With $(\ref{M_natural_a_1_ex}), (\ref{M_natural_1_2_ex})$ and 
$(\ref{M_natural_2_2_ex})$, $M_{\flat}(a)=M''_{\flat}(a)\odot M'_{\flat}(a)\odot M_{\natural}$ $(a=1,2)$ are
\begin{equation}
M_{\flat}(1)=
\begin{pmatrix}
u_1xv_1& u_2\\
e & v_2
\end{pmatrix},
\end{equation}
and 
\begin{equation}
M_{\flat}(2)=
\begin{pmatrix}
v_1xu_1& u_2\\
e & v_2
\end{pmatrix},
\end{equation}
see $(\ref{M_natural_a})$.
Then
\begin{equation}
M_{\flat}=
\begin{pmatrix}
u_1xv_1& u_2&v_1xu_1& u_2\\
e & v_{2}&e & v_{2}
\end{pmatrix},
\end{equation}
see $(\ref{M_natural})$.
We have
\begin{equation}\label{gammaelement'_ex}
\gamma_\flat=(u_1xv_1)(u_{2}v_{2})(v_1xu_1)(u_{2}v_{2}),
\end{equation}
see $(\ref{gammaelement'})$.

We next explain the construction of $\gamma_{\sharp}$.
According to $(\ref{W(h)})$,
\begin{equation}
W(1)=\{x\},\ W(2)=\{y\}.
\end{equation}
Note that 
$d_{\Gamma^t}(x,V_{\ast})=1$, $d_{\Gamma^t}(y,V_{\ast})=2$.
Then $V_0=W(1)\sqcup W(2)$ and $\mathfrak{h}=2$.
Here, $d_{\Gamma^t}(x,V_{\ast})=d_{\Gamma^t}(x,u_1)=d_{\Gamma^t}(x,v_1)$.
Therefore,
\begin{equation}
(i_{\sharp}(x),l_{\sharp}(x))=(i_{\sharp}(y),l_{\sharp}(y))=(1,1)
\end{equation}
(see $(\ref{i_flat})$ and $(\ref{l_flat})$), and
\begin{equation}\label{index_total-order_ex}
w_1=w_{\mathfrak{m}}=x
\end{equation}
where
\begin{equation}\label{density_total-order_ex}
\mathfrak{m}=\#\left(W(1)\sqcup \cdots \sqcup W(\mathfrak{h}-1)\right)=\#W(1)=1
\end{equation}
(see $(\ref{index_total-order})$ and $(\ref{density_total-order})$). 
Then 
\begin{equation}\label{l_flat_ex}
l_{\sharp}(w_1)=l_{\sharp}(x)=1.
\end{equation}
With $(\ref{l_flat_ex})$, $M'_{\sharp}(1)\in \mathcal{M}(2,2;A_{\Gamma})$ is
\begin{equation}\label{M_flat_j_1_ex}
M'_{\sharp}(1)=
\begin{pmatrix}
u_{1}^{-1}& e\\
e & e
\end{pmatrix},
\end{equation}
see $(\ref{M_flat_j_2})$.
Using $(\ref{l_flat_ex})$, $M''_{\sharp}(1)\in \mathcal{M}(2,2;A_{\Gamma})$ is
\begin{equation}\label{M_flat_1_2_ex}
M''_{\sharp}(1)=
\begin{pmatrix}
e& e\\
w_1(1)w_1(0) & e
\end{pmatrix}
=
\begin{pmatrix}
e& e\\
xu_{1}& e
\end{pmatrix}, 
\end{equation}
see $(\ref{M_flat_j_1})$.
With $(\ref{M_flat_j_1_ex}), (\ref{M_flat_1_2_ex})$, $M_{\sharp}(1)=M_{\natural}\odot M'_{\sharp}(1)\odot M''_{\sharp}(1)$ is
\begin{equation}
M_{\sharp}(1)=
\begin{pmatrix}
e& u_2\\
v_1xu_1 &v_{2}
\end{pmatrix},
\end{equation}
see $(\ref{M_flat_j})$.
Then
\begin{equation}
M_{\sharp}=M_{\sharp}(1)=
\begin{pmatrix}
e& u_2\\
v_1xu_1 &v_{2}
\end{pmatrix},
\end{equation}
see $(\ref{M_flat})$, and
we have
\begin{equation}\label{gammaelement''_ex}
\gamma_{\sharp}=(v_{1}xu_{1})(u_{2}v_{2}),
\end{equation}
see $(\ref{gammaelement''})$.

Then
\begin{equation}\label{gammaelement'''_ex}
\gamma_{\natural\sharp\flat}=\gamma_{\natural}\gamma_{\sharp}\gamma_{\flat}=(u_1v_1u_2v_2)(v_{1}xu_{1}u_{2}v_{2})(u_1xv_1u_{2}v_{2}v_1xu_1u_{2}v_{2})
\end{equation}
is the WPD element constructed in Section~\ref{main_proof}, 
where $\gamma_{\natural}$, $\gamma_{\sharp}$ and $\gamma_{\flat}$ are $(\ref{gammaelement^0_ex})$, $(\ref{gammaelement''_ex})$ and $(\ref{gammaelement'_ex})$.
See $(\ref{gammaelement''})$.

\section*{Acknowledgments}
The authors would like to thank Antoine Goldsborough, Nicolas Vaskou, Kasia Jankiewicz and MurphyKate Montee for useful conversations.
Especially the authors thank Antoine Goldsborough and Nicolas Vaskou
for their pointing out to us that Theorem~\ref{main} has an application to random Artin groups.


\end{document}